\documentclass[11pt]{article}
\usepackage[utf8]{inputenc}
\usepackage{lmodern}
\usepackage[normalem]{ulem} 
\usepackage{float}
\usepackage[margin=0.8in]{geometry}
\usepackage{enumitem}

\usepackage[utf8]{inputenc} 

\usepackage{tocloft}


\usepackage{graphicx, titlesec}
\usepackage{amscd,amsmath,amsthm,amssymb}
\usepackage{verbatim}
\usepackage[all]{xy}
\usepackage{booktabs}
\usepackage{tikz-cd}
\usepackage{ytableau}
\usepackage[algonl,boxed,norelsize,lined]{algorithm2e}
\usepackage[bookmarksnumbered=true]{hyperref} 
\hypersetup{
     colorlinks = true,
     linkcolor = blue,
     anchorcolor = blue,
     citecolor = teal,
     filecolor = blue,
     urlcolor = blue
     }
\DeclareMathOperator{\Pos}{Pos}
\DeclareMathOperator{\MPos}{MPos}
\DeclareMathOperator{\Gr}{Gr}
\DeclareMathOperator{\Mat}{Mat}
\newcommand{\mI}{\mathcal{I}}

\newcommand{\mD}{\mathcal{D}}
\newcommand{\mM}{\mathcal{M}}

\newcommand{\mB}{\mathcal{B}}

\newcommand{\mF}{\mathcal{F}}

\definecolor{ao}{rgb}{0.0, 0.5, 0.0}

\newcommand\xqed[1]{%
  \leavevmode\unskip\penalty9999 \hbox{}\nobreak\hfill
  \quad\hbox{#1}}
\newcommand\demo{\xqed{$\triangle$}}

\newtheorem{theorem}{Theorem}[section]

\theoremstyle{definition}
\newtheorem{definition}[theorem]{Definition}
\newtheorem{notation}[theorem]{Notation}
\newtheorem{example}[theorem]{Example}
\newtheorem{remark}[theorem]{Remark}
\newtheorem{question}[theorem]{Question}

\theoremstyle{plain}
\newtheorem{lemma}[theorem]{Lemma}
\newtheorem{proposition}[theorem]{Proposition}
\newtheorem{corollary}[theorem]{Corollary}

\title{Computing positroid cells in the Grassmannian of lines,
\\ their boundaries and their intersections}
\author{Susama Agarwala, Fatemeh Mohammadi, and Francesca Zaffalon
}


\begin{document}
\maketitle 
\begin{abstract}
Positroids are families of matroids introduced by Postnikov in the study of nonnegative Grassmannians. In particular, positroids enumerate a CW decomposition of the totally nonnegative Grassmannian. Furthermore, Postnikov has identified several families of combinatorial objects in bijections with positroids. We will provide yet another characterization of positroids for Gr$_{\geq 0}(2,n)$, the Grassmannians of lines, in terms of certain graphs. We use this characterization to compute the dimension and the boundary of positroid cells. 
This also leads to a combinatorial description of 
the intersection of positroid cells, that is easily computable. Our techniques rely on determining different ways to enlarge a given collection of subsets of $\{1,\ldots,n\}$ to represent the dependent sets of a positroid, that is the dependencies among the columns of a matrix with nonnegative maximal minors. Furthermore, we provide an algorithm to compute all the maximal positroids contained in a set.
\end{abstract}
{\hypersetup{linkcolor=black}
{\tableofcontents}}

\section{Introduction}

In this note, we take an algorithmic approach to study positroids, a class of matroids that can be represented by the columns of a real matrix with nonnegative maximal minors.
More explicitly, for the Grassmannians of lines $\Gr(2,n)$, we provide a characterization for  families of $2$-subsets of $\{1,\dots,n\}$ to represent the dependencies of a positroid and study the positroids whose dependent sets are compatible with them.
We then provide an algorithm to identify all maximal positroids among them. 
This, in turn, enables us to compute the dimension of positroid cells, their boundaries, and their intersections.

\medskip

Positroid varieties in $\Gr(k,n)$ and positroid cells in $\Gr_{\geq 0}(k,n)$, along with their combinatorial and geometric properties, have been the subject of active research over the past twenty years. Building on the seminal work on {\em positivity} by Lusztig \cite{lusztig} and by Fomin and Zelevinsky \cite{fomin1999double}, Postnikov \cite{postnikov2006total} introduced positroids in the study of the cell decompositions of the nonnegative Grassmannian and showed that they possess remarkable algebraic, geometric, and combinatorial structures. Geometrically, the restriction of the matroid stratification of $\Gr(k,n)$ given in \cite{gelfand1987combinatorial} to the nonnegative Grassmannian provides a decomposition of $\Gr_{\geq 0}(k,n)$ into so-called \emph{positroid cells} \cite{postnikov2006total,PSW,rietsch}. Each positroid cell is a topological cell homeomorphic to a ball \cite{galashin2021totally}, and moreover, the positroid cells glue together to form a CW complex \cite{PSW}. Since Postnikov's work \cite{postnikov2006total}, positroids have been extensively studied in mathematics and have also appeared in various other fields, including amplituhedron theory (see, e.g., \cite{Arkani-Hamed:2016byb} and references therein).

More algebraically, positroid varieties were initially identified by Knutson, Lam, and Speyer \cite{knutson2013positroid} as a refinement of the Bruhat and Richardson variety decompositions of the Grassmannians. When restricted to the nonnegative Grassmannian $\Gr_{\geq 0}(k,n)$ (the points in the Grassmannian where all the Plücker coordinates are nonnegative), these positroid varieties form a CW complex \cite{galashin2021totally}, where the intersection of the closures of two positroid cells is itself a closed positroid cell. However, little is known about the poset structure of positroid cells induced by inclusion of closures. Some insight into this structure may be gained from the Bruhat order, which underlies \cite{knutson2013positroid, talaska2013network, marsh2004parametrizations, Towercommunication}.

\medskip
Postnikov \cite{postnikov2006total} identified a family of combinatorial tools for enumerating and analyzing the relationships between positroid cells that are particularly useful for understanding the connections among the geometric subspaces in $\Gr_{\geq 0}(k,n)$, the Plücker coordinates that define them, and the Bruhat intervals that define the positroid varieties. This work has generated a large body of literature directly establishing equivalences between the various enumeration methodologies introduced therein \cite{oh2011positroids, agarwala2020algorithm, talaska2013network, marcott2020combinatorics, casteels2017grassmann, mohammadi2024combinatorics}, among others. These developments have made it easier to work with positroid cells, since it is generally easier to translate a set of Plücker coordinates into one of these combinatorial objects and thereby extract the desired geometric or combinatorial information. However, to date, there is no good algorithm to determine when one of these combinatorial objects corresponds to a positroid lying on the boundary of the closure of another.

\medskip

This work overcomes this challenge for $\Gr_{\geq 0}(2,n)$ by answering the 
following questions.

\begin{question}\label{question:matroid}
Given a collection $\mathcal{D}$ of $2$-subsets of $[n]:=\{1,\ldots,n\}$, does $\mathcal{D}$ represent the dependent sets (i.e., vanishing Plücker coordinates) of a matroid? Equivalently, is there a $2\times n$ matrix whose columns $i$ and $j$ are linearly dependent if and only if $\{i,j\}\in \mathcal{D}$? Furthermore, does such a matrix exist with nonnegative $2$-minors? 
\end{question}

If $\mathcal{D}$ does not represent the dependent sets of a matroid or a positroid, we ask:
\begin{question}\label{question:max}
Given a collection $\mathcal{D}$ of $2$-subsets of $[n]$, in how many ways can we enlarge $\mathcal{D}$ so that it represents the dependent sets of a positroid? Equivalently, which positroids have collections of dependent sets containing $\mathcal{D}$? Among these, which are maximal with respect to inclusion?
\end{question}

We answer the above questions by taking an algorithmic approach. We first represent any collection of dependent sets as the edge set of a simple graph on $n$ vertices. We then use this graphical representation to characterize graphs encoding the dependent sets of a matroid or a positroid (see Lemma~\ref{lemma:completecomp} and Proposition~\ref{prop:niceness}). In Proposition~\ref{prop:matT}, we provide a procedure for finding matroids whose dependent sets contain $\mD$, and we use these matroids to identify all maximal positroids with this property. In particular, Algorithm~\ref{alg:posT} enumerates all such positroids, thereby answering Question~\ref{question:max}. This allows us to determine all codimension-one boundaries of a given positroid cell, and in particular, the maximal positroids contained in the intersection of positroid cells (see Propositions~\ref{res:1boundaries} and \ref{prop:intersection}). Moreover, in Proposition~\ref{cor:dimension} we compute the dimension of each positroid cell in terms of graph invariants.

We finish the introduction with an outline of the paper. Section~\ref{sec:pre} fixes notation for the nonnegative Grassmannians and positroids and outlines an explicit bijection between positroids and Le diagrams. In Section~\ref{sec:results}, we present our main results.  
More precisely, \S\ref{sec:cells} contains our graphical representation of sets, and \S\ref{sec:niceanddim} the characterization of positroids in terms of these graphs. In \S\ref{sec:containment}, 
we present a procedure to identify the collection of maximal positroids that are compatible with a given set of dependencies. In \S\ref{sec:dim}, we give a formula to compute the dimension of positroid cells. In \S\ref{sec:boundaries}, we apply our main results to compute the codimension-one boundaries of positroid cells, and in \S\ref{sec:intersections}, we compute the maximal positroid cells contained in the intersections of the closures of given positroid cells.

\medskip
\noindent{\bf Acknowledgments.} The authors would like to thank Matteo Parisi and Leonid Monin for helpful discussions on \cite{mohammadi2021triangulations} and Question 1.2 above. F.M. and F.Z. were partially supported by UGent BOF grant STA/201909/038, FWO grants (G023721N, G0F5921N), and iBOF/23/064 from KU Leuven.

\section{Preliminaries}\label{sec:pre}
In this section, we establish some general background on different means of defining positroid cells, many of which are equivalent, but are useful identifications in different contexts. For more details, we refer the reader to \cite{postnikov2006total} and \cite{oxley2006matroid}. 

\smallskip

Throughout we let $[n]=\{1,\ldots,n\}$ be cyclically ordered and we denote $\binom{[n]}{k}$ for the collection of $k$-subsets of $[n]$. Given a subset $\mD \subset \binom{[n]}{k}$, we denote $\mD^c$ for its complement in $\binom{[n]}{k}$, that is $\mD^c=\binom{[n]}{k} \backslash \mD$.

\subsection{Totally nonnegative Grassmannians}
The real Grassmannian $\Gr(k,n)$ is the space of all $k$-dimensional linear subspaces of $\mathbb{R}^n$. 
A point $V \in \Gr(k,n)$ can be represented by a full-rank $k \times n$ matrix with entries in $\mathbb{R}$. 
Let $X = (x_{ij})$ be such a matrix. 
For any $k$-subset $I = \{i_1,\ldots,i_k\}$, let $X_I$ denote the $k \times k$ submatrix of
$X$ with the column indices $i_1,\ldots,i_k$. 
The \emph{Pl\"ucker coordinates} of $V$ are $p_I(V) = \text{det}(X_I)$ for $I \in\binom{[n]}{k}$. Note that the Pl\"ucker coordinates do not depend on the choice of matrix $X$ representing the point $V$ in $\Gr(k,n)$ (up to simultaneous rescaling by a nonzero constant). 

\smallskip

An element in the real Grassmannian $\Gr(k,n)$ is called \emph{totally nonnegative} if it has a matrix representation whose (cyclically ordered) maximal minors are all
nonnegative. The subset defined by these elements is called the nonnegative Grassmannian, denoted by $\Gr_{\geq 0}(k,n)$.

\subsection{Matroids and positroids \label{sec:matroids}}

There are many equivalent ways to define matroids. However, for our discussion, it will be most useful to define them in terms of their bases.

\medskip

A matroid is a pair $\mathcal{M}=(E,\mathcal{B})$ where $E$ is a finite set called ground set and $\mathcal{B}$, called bases, is a nonempty collection of subsets of $E$ satisfying the exchange axiom for every pair of bases 
$B_1, B_2$ in $\mathcal{B}$:
\begin{itemize}
    \item $\text{If } b_1\in B_1\backslash B_2 \text{, then there exists } b_2\in B_2\backslash B_1 \text{ such that } B_1\backslash\{b_1\} \cup \{b_2\}\in \mathcal{B}.$
\end{itemize} 
As a consequence of the exchange axiom, every element of the bases $\mB$ has the same cardinality, which is called \textit{rank} of the matroid $\mathcal{M}$.
We can now define dependent and independent sets of matroids, as central concepts of this paper.
A set is called {\em independent} if and only if it is a subset of a basis, otherwise it is called {\em dependent}. In particular minimal dependent sets of $\mM$ are called {\em circuits} denoted by $\mathcal{C}(\mM)$.

\medskip

A matroid $\mathcal{M}=(E,\mathcal{B})$ is called {\em representable} over a field $\mathbb{K}$ if there exists a $k\times |E|$ matrix $A$ with entries in $\mathbb{K}$ such that, if we denote by $A[I]$ the submatrix obtained by selecting the columns indexed by the set $I\in \binom{|E|}{k}$, then
\[\det A[I] \neq 0 \text{ if and only if } I \in \mathcal{B}.\]
If there exists such a matrix $A$ which is a representative of an element in the nonnegative Grassmannian $\Gr_{\geq 0}(k,n)$, that is all its maximal minors are nonnegative, then $\mathcal{M}$ is called a \emph{positroid}. %

\begin{definition}[Positroid cells] \label{def:positroid}
For each $\mathcal{I}\subseteq \binom{[n]}{k}$, we define the set $S_+(\mathcal{I})$ as 
\[ S_+(\mathcal{I}) = \{GL_+(k) \cdot A \mid \det (A[I])>0 \text{ if } I \in \mathcal{I}, \text{ and } \det(A[I])=0 \text{ if } I \not\in \mI\}, \]
where $GL_+(k)$ is the set of invertible totally positive $k\times k$ matrices. If $S_+(\mathcal{I})\neq\emptyset$, then we call $\mathcal{I}$ a \emph{positroid} and $S_+(\mathcal{I})$ a \emph{positroid cell}.
\end{definition}

\begin{remark}\label{rmk:cyclicsym}
We recall from \cite[Lemma~3.3]{ardila2016positroids} that whether a matroid is a positroid depends essentially on the total order of its ground set; however, it is invariant under cyclic shifts of the ground set. 
Consider the column vectors $v_1,\ldots,v_n \in \mathbb{R}^k$ forming the $k \times n$ matrix $A = (v_1, \dots, v_n)$, and let $A' = (v_2,\dots, v_n, (-1)^{k-1} v_1)$. Note that $p_I(A) = p_{I'}(A')$ for every $k$-subset $I$ and its cyclic shift $I'$. This defines an action of the cyclic group $\mathbb{Z}/n\mathbb{Z}$ on $\Gr_{\geq 0}(k,n)$. In particular, for each $i$, the cyclic shift of the order $1 < \cdots < n$ to $i < i+1 < \cdots < n < 1 < \cdots < i-1$ preserves the class of positroids in $\Gr_{\geq 0}(k,n)$. 
\end{remark}

\begin{remark}[Dual matroids]
We recall that given a matroid $\mathcal{M} =(E,\mB)$, its dual is the matroid $\mathcal{M}^*=(E,\mB^*)$ with bases $\mB^* =\{E\backslash B \mid B \in \mB\}$. In particular, ${\rm rank}(\mathcal{M}^*)=n-{\rm rank}(M)$. Moreover, by \cite[Proposition 3.5]{ardila2016positroids} the dual of any positroid on the cyclically ordered set $E$ is also a positroid.
\end{remark}

In the following, we will focus on matroids and positroids of rank $2$. However, by the above remark, the results can be applied for matroids and positroids of rank $n-2$ by taking the dual matroids.

\subsection{A bijection between positroids and Le diagrams}\label{sec:Le}

There are several ways to index positroids, each with its own advantages and disadvantages. In particular, \cite{postnikov2006total} and \cite[\S4]{ardila2016positroids} describe various bijections among decorated permutations, Le diagrams, Grassmann necklaces, and equivalence classes of reduced plabic graphs. Here, we focus on giving a direct way to move between positroids, Le diagrams, and positive reduced subexpressions: such maps can be constructed by combining the previously mentioned bijections.


\begin{definition}[Le diagrams]
A \textit{Le diagram} is a Young diagram in which each box contains either a $+$ or a $0$, subject to the following condition: if a box contains a $0$, then at least one of the following holds:
\begin{itemize}
    \item Every box to its left in the same row contains a $0$;
    \item Every box above it in the same column contains a $0$.
\end{itemize}
\end{definition}

The positroids in $\Gr_{\geq 0}(k,n)$  
are in bijection with the Le diagrams that fit inside a $k \times (n-k)$ rectangle. 
The advantage of Le diagrams is that one can easily read off the dimension of a positroid cell by simply counting the number of $+$s. Furthermore, we can 
identify the bases 
of a positroid from its Le diagram. 

\medskip

In the following, we provide an algorithm to label the positroids with Le diagrams.

\medskip
\noindent{\bf Constructing a positroid from a Le diagram.}\label{page:algorithm}
To construct the positroid associated to a given Le diagram we can combine the correspondence between Le diagrams and planar directed networks shown in \cite[\S6]{postnikov2006total} with the bijection between planar directed networks and positroids in \cite[\S4]{postnikov2006total}. 

\smallskip

Let $L$ be a Le diagram fitting inside a $k \times (n-k)$ rectangle. Label each step along the southeast border of $L$ with $1,2,\dots,n$ starting from the north-east corner. Note that if $L$ has fewer than $k$ rows or fewer than $n-k$ columns, some of these steps will lie on the bounding $k \times (n-k)$ rectangle. Let $S$ and $T$ be the sets of labels of rows and columns, respectively. We associate a directed graph $\Gamma(L)$ to $L$ as follows, and we show that every such graph has a canonical positroid. To construct the graph $\Gamma(L)$:
\begin{enumerate}
    \item[(1)] Place a vertex next to each row and column label. Add a vertex in each square 
    containing a $+$;
    \item[(2)] Join any two consecutive vertices in the same row with a leftward pointing arrow. Join any two consecutive vertices in the same column with an arrow directed downwards.
\end{enumerate}
A \textit{path} in $\Gamma(L)$ is any path from a vertex $s \in S$ to a vertex $t \in T$ along the directed arrows. Two paths are called \textit{vertex-disjoint} if they do not have any vertex in common. If $I=\{i_1,\dots,i_r\}\subseteq S$ and $J=\{j_1,\dots,j_r\}\subseteq T$, then a \textit{vertex-disjoint path system} for $(I,J)$ is a collection of paths $i_1\to j_1,\dots , i_r \to j_r$ which are pairwise disjoint. Every such graph $\Gamma(L)$ defines a positroid $\mathcal{P}_L=([n],\mathcal{B})$ where
\[ B \in \mathcal{B} \iff |B|=k \text{ and } (S\backslash B, T\cap B) \text{ admits a vertex-disjoint path system.} \]

Given a Le diagram, we say that a box is in position $(i,j)$ if, assigning the labels to rows and columns as described above, the box lies in the row indexed $i$ and the column indexed $j$. For instance in the Le diagram in Example~\ref{ex:pos}, the box containing zero is in position $(1,5)$.

\medskip
\noindent{\bf Constructing a Le diagram from a positroid.} Let $\mB \subseteq \binom{[n]}{k}$ be the bases set of a positroid. To construct the Le diagram associated to $\mB$ we may combine the following two algorithms: first we construct the so-called Grassmann necklace $\mI_\mB=\{I_1,\ldots,I_n\}$ corresponding to the positroid $\mB$ as shown in \cite{postnikov2006total, Mcalmon2019FlatsOA}. Then we apply \cite[Algorithm~2]{agarwala2020algorithm} to obtain the corresponding Le diagram. 
\begin{itemize}
    \item[(1)] For $i \in [n]$, let $I_i$ be 
    the lexicographically minimal element of $\mB$ with respect to the shifted linear order $<_i$ on $[n]$, where 
    $i <_i i + 1 <_i  \cdots <_i n <_i 1 <_i\cdots
<_i i - 1$. 
    \item[(2)] Draw the Young diagram fitting inside a $k \times (n-k)$ rectangle such that labeling the southeast border as seen before, the row label is $I_1$;
    \item[(3)] For every $i$, $2 \leq i \leq n$, write:
    \[ I_1\backslash I_i =\{a_1 > a_2 >\dots >a_r\}, \quad I_i\backslash I_1 =\{b_1 < b_2 <\dots < b_r\}. \]
    Place a $``+"$ in each box $(a_1,b_1), \dots, (a_r,b_r)$.
    \item[(4)] After performing step (3) for all $i$, place a $``0"$ in every box remained unfilled.
\end{itemize}

\begin{example}\label{ex:pos}
Consider the positroid with bases set $\mathcal{B}\subset \binom{[6]}{2}$ given by
\[\{ \{1,4\}, \{1,5\}, \{1,6\}, \{2,4\}, \{2,5\}, \{2,6\}, \{3,4\}, \{3,5\}, \{3,6\}, \{4,6\}, \{5,6\}\}.\]
The associated Grassmann necklace is $\mI_\mB =(I_1,\dots, I_6)$ with
\[ I_1=\{1,4\}, \; I_2=\{2,4\}, \; I_3=\{3,4\}, \; I_4=\{4,6\}, \; I_5=\{5,6\}, \; I_6=\{1,6\}. \]
Then the corresponding Le diagram is constructed from the Young diagram fitting inside the $2\times(6-2)$ rectangle with rows labeled by $\{1,4\}$. Moreover we have:
\[\begin{cases}I_1\backslash I_2 =\{1\}\\ I_2\backslash I_1=\{2\}\end{cases} \;
\begin{cases}I_1\backslash I_3 =\{3\}\\ I_3\backslash I_1=\{1\}\end{cases} \;
\begin{cases}I_1\backslash I_4 =\{1\}\\ I_4\backslash I_1=\{6\}\end{cases} \;
\begin{cases}I_1\backslash I_5 =\{4>1\}\\ I_5\backslash I_1=\{5<6\}\end{cases} \;
\begin{cases}I_1\backslash I_6 =\{4\} \\ I_6\backslash I_1=\{6\} \end{cases}
\]
which means that we need to place a $+$ in the boxes $(1,2), (1,3), (1,6), (4,5), (4,6)$. Then the Le diagram associated to $\mB$ is the diagram $L$ on the left:
\begin{center}
    \begin{tikzpicture}[scale=0.75]
    \draw (0,0) --(0,2) --(4,2) --(4,1) --(2,1) --(2,0) --cycle;
    \draw (0,1) --(2,1) --(2,2);
    \draw (1,0) --(1,2);
    \draw (3,1) --(3,2);
    \draw (0.5,0.5) node{$+$};
    \draw (1.5,0.5) node{$+$};
    \draw (0.5,1.5) node{$+$};
    \draw (2.5,1.5) node{$+$};
    \draw (3.5,1.5) node{$+$};
    \draw (1.5,1.5) node{$0$};
    \draw (0.5,0) node[below]{$6$};
    \draw (1.5,0) node[below]{$5$};
    \draw (1.9,0.4) node[right]{$4$};
    \draw (2.6,.95) node[below]{$3$};
    \draw (3.5,.95) node[below]{$2$};
    \draw (4,1.5) node[right]{$1$};
    \draw (0,1) node[left]{$L = $};
    \end{tikzpicture}
\hspace{1.5cm} \begin{tikzpicture}[scale=0.75]
    \fill (0.5,0.5) circle(0.7mm);
    \fill (1.5,0.5) circle(0.7mm);
    \fill (0.5,1.5) circle(0.7mm);
    \fill (2.5,1.5) circle(0.7mm);
    \fill (3.5,1.5) circle(0.7mm);
    \fill (0.5,0) circle(0.7mm);
    \fill (1.5,0) circle(0.7mm);
    \fill (2.5,1) circle(0.7mm);
    \fill (2,0.5) circle(0.7mm);
    \fill (3.5,1) circle(0.7mm);
    \fill (4,1.5) circle(0.7mm);
    \draw[->] (3.9,1.5) --(3.6,1.5);
    \draw[->] (3.4,1.5) --(2.6,1.5);
    \draw[->] (2.4,1.5) --(0.6,1.5);
    \draw[->] (1.9,0.5) --(1.6,0.5);
    \draw[->] (1.4,0.5) --(0.6,0.5);
    \draw[->] (0.5,1.4) --(0.5,0.6);
    \draw[->] (0.5,0.4) --(0.5,0.1);
    \draw[->] (1.5,0.4) --(1.5,0.1);
    \draw[->] (2.5,1.4) --(2.5,1.1);
    \draw[->] (3.5,1.4) --(3.5,1.1);
    \draw (0.5,0) node[below]{$6$};
    \draw (1.5,0) node[below]{$5$};
    \draw (1.9,0.4) node[right]{$4$};
    \draw (2.6,.95) node[below]{$3$};
    \draw (3.5,.95) node[below]{$2$};
    \draw (4,1.5) node[right]{$1$};
    \draw (0,1) node[left]{$\Gamma(L) = $};
    \end{tikzpicture}\end{center}
Note that this is indeed a Le diagram. Moreover, we can easily see that the dimension of the corresponding positroid cell is equal to $5$, that is the number of $+$ in the Le diagram.

Let us now apply the algorithm to obtain the positroid associated to $L$ and check that this is in fact the positroid $\mB$. Note that the associated graph $\Gamma(L)$ is the diagram on the right.
Moreover, the $2$-subsets $B$ such that $(\{1,4\}\backslash B, \{2,3,5,6\}\cap B)$ admits a vertex-disjoint path system 
are:  $\{\{1,4\}, \{1,5\},\{1,6\}, \{2,4\},\{2,5\},\{2,6\},\{3,4\},\{3,5\},\{3,6\},\{4,6\},\{5,6\}\}$, which equals $\mB$. \demo
\end{example}

\noindent{\bf Positive reduced subexpressions.}
There is another way to understand positroids in terms of the Coxeter group. This section provides only a brief overview of the topic; for more details, see \cite{marsh2004parametrizations, williams2008total, knutson2013positroid}. Let $S_n$ be the symmetric group on $n$ elements, and let $v \in S_n$ be a permutation. Let $s_i = (i, i+1)$ denote the Coxeter generators of $S_n$. We can write an expression for $v$ in terms of the $s_i$ as $\mathbf{v} = v_1 v_2 \cdots v_m$. In general, we use bold letters for expressions and italic letters for permutations.

\medskip

Let $v_{(i)}$ be the product of the first $i$ letters in an expression $\mathbf{v}$; that is, $v_{(0)} = \varepsilon$ and $v_{(m)} = v$. Let $\ell(v)$ denote the length of $v$, i.e., the minimal number of generators needed to write $v$. An expression is called \emph{reduced} if $\ell(v_{(i+1)}) = \ell(v_{(i)}) + 1$ for all $i$. We say that $\mathbf{u}$ is a subexpression of $\mathbf{v}$ if $\mathbf{u}$ can be obtained by replacing certain letters of $\mathbf{v}$ with $\varepsilon$. There is a partial ordering on $S_n$ called the Bruhat order, where $u \leq v$ if and only if some expression for $u$ is a reduced subexpression of an expression for $v$. We call $[u,v]$ a Bruhat interval. A subexpression $\mathbf{u}$ of $\mathbf{v}$ is \emph{distinguished} if, whenever $\ell(u_{(i)} v_{i+1}) < \ell(u_{(i)})$, it follows that $v_{i+1} = v_i$. A distinguished subexpression is \emph{positive} if it is also reduced.

Positive subexpressions of reduced expressions are in one-to-one correspondence with Le diagrams \cite[Prop.~4.5]{williams2008total} (see also \cite{marcott2020combinatorics} for an exposition). Namely, the Bruhat order on $S_n$ induces a total order on $S_n / (S_k \times S_{n-k})$, where there is a unique Grassmannian representative $v$ such that $\bar{v} \in S_n / (S_k \times S_{n-k})$ defines a Ferrers shape, i.e., the shape of a Le diagram. The Ferrers shape gives rise to an expression $\mathbf{v}$ of $v$. Then the Le diagrams are in one-to-one correspondence with the positive subexpressions $\mathbf{u}$ of $\mathbf{v}$. Specifically, one can read off the positions of the $+$’s and $0$’s in the Le diagram defined by $(\mathbf{u}, \mathbf{v})$.

For example, in the Le diagram of Example~\ref{ex:pos}, the corresponding reduced expression is
$\mathbf{v} = s_4 s_5 s_1 s_2 s_3 s_4$,
while the positive subexpression is
$\mathbf{u} = \varepsilon \, \varepsilon \, \varepsilon \, \varepsilon \, s_3 \, \varepsilon$.
Note that the $\varepsilon$’s correspond to the placement of the $+$’s in the diagram, while the $s_i$ correspond to the $0$’s.

\medskip
The advantage of considering subexpression–expression pairs is that they provide useful insight into which positroids are contained in one another. Namely, if $\mathbf{u}$ and $\mathbf{u}'$ are two positive subexpressions of $\mathbf{v}$ such that $\mathbf{u}$ is a subexpression of $\mathbf{u}'$, then $(\mathbf{u}, \mathbf{v})$ is a boundary positroid of $(\mathbf{u}', \mathbf{v})$ whenever ${u}' \leq {u}$ in the Bruhat order. Furthermore, unpublished work \cite{Towercommunication} gives an algorithm to identify all boundaries of a positroid cell $(\mathbf{u}, \mathbf{v})$ in terms of Bruhat intervals.

\section{Main results \label{sec:results}}
\subsection{Positroid cells in the Grassmannian of lines \label{sec:cells}} 
We now focus on the case $k=2$. 
Our goal in this paper is to completely answer Questions~\ref{question:matroid} and \ref{question:max}.

\medskip

Note that if there exists a matrix satisfying the conditions in Question~\ref{question:matroid}, i.e.~it has nonzero (respectively nonnegative) $2$-minors, then $\mD^c$ is a representable matroid (respectively a positroid).
When there is no such a matrix, that is $\mD^c$ is not a positroid, then we are interested to compute the maximal positroids in $\mD^c$, particularly answering Question~\ref{question:max}. In general, there are multiple possible definitions of maximal that one may want to consider such as the cardinality or the dimension of the positroid. We are in particular interested to compute the maximal collections of $2$-pairs $\mM\subseteq \mD^c$ such that $\mM$ is a matroid (or a positroid), but there exists no other matroid (or a positroid) $\mM'$ such that $\mM\subset \mM'\subseteq \mD^c$. 

\begin{definition}\label{def:max}
Given a subset $\mD \subset \binom{[n]}{2}$,  we define Mat$(\mD)$ to be the set of maximal matroids contained in $\mD^c$ and $\MPos(\mD)$ for the set of maximal positroids contained in $\mD^c$.
\end{definition}

In this paper, we are interested in computing Mat$(\mD)$ and $\MPos(\mD)$. 
Note that any $2\times n$ matrix $A$ of rank $2$ defines a (representable) matroid $\mM = ([n], \mB)$ with $|B| = 2$ for all $B \in \mB$. Recall from \S\ref{sec:matroids} that the dependent sets of $\mM$ are all non-empty subsets of $[n]$ that are not contained in any basis. In particular, all subsets with at least three elements are dependent. Moreover, any zero column $i$, that is a circuit of size one, gives rise to dependent pairs $\{i,j\}$ for all $j\in[n]\backslash\{i\}$. See Remark~\ref{rem:zero_columns}. Therefore, to classify representable matroids or positroids of rank $2$, we only consider the dependent sets of size $2$. This property does not generalize beyond $k=2$.

\medskip 

We now 
define a bijection between collections of 
$2$-subsets of $[n]$ and graphs on  
subsets of $[n]$. 
\begin{definition}\label{def:graphs}
Given a subset $\mD \subset \binom{[n]}{2}$, we define $T_{\mD}=\{ i \in [n] \mid \{i,j\} \in \mD \text{ for all } j \in [n]\backslash \{i\}\}$
and $G_{\mD}$ the graph with vertex set $[n]\backslash T_{\mD}$ and edge set $\{\{i,j\} \in \mD \mid i,j \in [n]\backslash T_{\mD}\}$. Conversely, to any graph $G =( [n]\backslash T, E)$ we associate the subset $\mD_G = E \cup \{\{i,j\} \mid i \in T, j \in [n]\backslash\{i\}\}$
of $\binom{[n]}{2}$. 
Given $T \subset [n]$, we denote $P_{n,T}$ for the polygon with vertex set $[n]\backslash T$. (See Example~\ref{ex:crossing} and Figure~\ref{fig:examples}).
\end{definition}

\begin{remark}\label{rem:zero_columns}
Note that when $\mD^c$ is a matroid, $T_\mD$ is the set of loops in $\mD^c$, or equivalently, it encodes the zero columns in a matrix representation of $\mD^c$. Graphically, if a vertex (a.k.a.~column) does not contribute to \textit{any} basis set, then the set of $k$ planes in $\mathbb{R}^n$ defined by this matrix is the same as the set of $k$ planes in $\mathbb{R}^{n-|T_\mD|}$. 
More precisely, suppose that $\mD$ is the dependent set of a positroid and that $T_\mD \neq \emptyset$. Then there exists a $k\times n$ matrix representative $A$ for $\mD^c$ such that the columns indexed by $i \in T_\mD$ are the zero vector. Let $A'$ be the $k \times ([n]-|T_\mD|)$ matrix obtained from $A$ by eliminating the columns indexed by $T_\mD$. This is a matrix representative for the matroid with ground set $[n]\backslash T_{\mD}$ and dependent set $\{\{i,j\}\in \mD \mid i,j \not\in T_\mD\}$. In particular, every such subset $\mD$ can be identified as the dependent set $\mD'$ of a matroid with ground set $[n]\backslash T_\mD$ and $T_{\mD'}=\emptyset$ with $G_\mD = G_{\mD'}$.
\end{remark}

A first property $\mathcal{D}$ must have to define a positroid is to represent a matroid. This corresponds to the following condition on the graph $G_{\mathcal{D}}$.

\begin{lemma}\label{lemma:completecomp}
$\mD^c$ is a matroid if and only if every connected component of $G_{\mD}$ is a complete graph.
\end{lemma}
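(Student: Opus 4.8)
The plan is to characterize rank-$2$ matroids via their circuits and translate this into a graph-theoretic condition on $G_{\mathcal D}$. By Remark~\ref{rem:zero_columns}, we may assume $T_{\mathcal D} = \emptyset$, so that $G_{\mathcal D}$ has vertex set $[n]$ and edge set exactly $\mathcal D$; the isolated vertices of $G_{\mathcal D}$ correspond to elements contained in every basis. The dependent $2$-subsets of a putative rank-$2$ matroid $\mathcal D^c$ are precisely the $2$-element circuits (parallel pairs), since $T_{\mathcal D} = \emptyset$ rules out loops. So the statement reduces to: a collection $\mathcal D$ of $2$-subsets is the set of parallel pairs of some rank-$2$ matroid on $[n]$ if and only if $\mathcal D$, viewed as a graph, is a disjoint union of cliques (together with isolated vertices). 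The key observation is that the ``is parallel to'' relation in a matroid must be an equivalence relation on the non-loop elements: reflexivity and symmetry are automatic, and transitivity is what forces the clique structure.

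First I would prove the forward direction. Suppose $\mathcal M = ([n], \mathcal B)$ is a matroid of rank $2$ with dependent $2$-subsets $\mathcal D$. Fix a connected component $K$ of $G_{\mathcal D}$; I must show $K$ is complete. Take $i,j,\ell$ with $\{i,j\} \in \mathcal D$ and $\{j,\ell\} \in \mathcal D$; I claim $\{i,\ell\} \in \mathcal D$. Since $\{i,j\}$ is dependent and the matroid has rank $2$, $\{i,j\}$ is a circuit, i.e.\ $i$ and $j$ are parallel; likewise $j,\ell$ are parallel. If instead $\{i,\ell\} \notin \mathcal D$, then $\{i,\ell\} \in \mathcal B$ is a basis. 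But then applying the exchange axiom (or directly: $j$ lies in the closure of $\{i\}$ and also in the closure of $\{\ell\}$, forcing $i,\ell$ in a common rank-$1$ flat, contradicting $\{i,\ell\} \in \mathcal B$) yields a contradiction. Concretely, since $\{j,\ell\} \notin \mathcal B$ and $\{i,\ell\} \in \mathcal B$, the exchange axiom applied to $B_1 = \{i,\ell\}$, $B_2$ any basis containing $j$, with $b_1 = i$, would have to produce a basis of the form $\{\ell, b_2\}$ — and one checks using $\{i,j\},\{j,\ell\} \in \mathcal D$ that no valid choice of $b_2$ exists, since $b_2 = j$ gives the dependent set $\{j,\ell\}$. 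Transitivity of the parallel relation follows, so each connected component of $G_{\mathcal D}$ is a clique.

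For the converse, suppose every connected component of $G_{\mathcal D}$ is complete. I would construct an explicit rank-$2$ representable matroid with dependent $2$-sets equal to $\mathcal D$: assign to each connected component (clique or isolated vertex) a distinct nonzero vector in $\mathbb R^2$, chosen so that vectors from distinct components are pairwise linearly independent (e.g.\ $(1, c)$ for distinct scalars $c$), and let column $i$ be the vector assigned to the component containing $i$. Then columns $i,j$ are linearly dependent iff they lie in the same component iff $\{i,j\} \in \mathcal D$ (using completeness for the ``if''), and the $2\times 2$ minors are not all zero as soon as there are at least two components — and if there is only one component, it is a single clique, all columns are equal, the matroid has rank $1$ not $2$, which is the degenerate case one handles by noting that with $T_{\mathcal D}=\emptyset$ and $|[n]|\geq 2$ having a single clique on all vertices means $\mathcal D = \binom{[n]}{2}$, whose complement is still a (rank-$\leq 1$) matroid; I would either restrict attention to the honest rank-$2$ case or absorb this boundary case explicitly. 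Finally one checks $\mathcal D^c$ is exactly the basis set of this matroid, so $\mathcal D^c$ is a matroid.

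The main obstacle I anticipate is the careful bookkeeping around degenerate cases and the precise application of the exchange axiom in the forward direction: the clean statement ``parallel is transitive'' is intuitively obvious from the geometry of lines in the plane, but deriving it purely from the exchange axiom without invoking closure operators or rank functions (which the excerpt has not formally introduced — it only gives bases, independent/dependent sets, and circuits) requires a slightly delicate argument. A secondary subtlety is making sure the reduction to $T_{\mathcal D} = \emptyset$ via Remark~\ref{rem:zero_columns} is invoked correctly, so that ``connected component'' refers to $G_{\mathcal D}$ (which has already discarded the $T_{\mathcal D}$ vertices) rather than to the full graph on $[n]$.
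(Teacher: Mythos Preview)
Your forward-direction sketch has a genuine gap: with $B_1=\{i,\ell\}$, $B_2=\{j,m\}$ and $b_1=i$, the exchange axiom allows $b_2=m$, producing the basis $\{\ell,m\}$ and no contradiction. The paper closes this by swapping the roles: take $B_1=\{j,m\}$ (a basis through $j$, available since $j\notin T_{\mathcal D}$) and $B_2=\{i,\ell\}$, then exchange out $b_1=m$. Since $m\notin\{i,\ell\}$ (because $\{i,j\},\{j,\ell\}\in\mathcal D$ rule out $m=i$ and $m=\ell$), the axiom forces $\{j,i\}$ or $\{j,\ell\}$ to be a basis, contradicting that both lie in $\mathcal D$. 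This is precisely the ``delicate'' step you flagged; the fix is simply the choice of which element to exchange.

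Your converse takes a different route from the paper. The paper verifies the exchange axiom directly in one line: given bases $\{i,j\},\{k,l\}\in\mathcal D^c$, if both $\{i,k\}$ and $\{i,l\}$ were in $\mathcal D$ then completeness of components would force $\{k,l\}\in\mathcal D$ or $i\in T_{\mathcal D}$, either way a contradiction. Your explicit $\mathbb R$-representation is valid and proves more (representability) at the cost of some length. Your worry about the single-clique case is unnecessary: if $T_{\mathcal D}=\emptyset$ and $G_{\mathcal D}$ were a single clique on all of $[n]$, then every vertex would satisfy the defining condition of $T_{\mathcal D}$, forcing $T_{\mathcal D}=[n]$, a contradiction; hence under $T_{\mathcal D}=\emptyset$ there are always at least two components and your matrix has rank $2$. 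One minor caution: Remark~\ref{rem:zero_columns} as stated presumes $\mathcal D^c$ is already a matroid, so for the converse you should justify the reduction to $T_{\mathcal D}=\emptyset$ directly (adding loops does not affect the exchange axiom on the basis set), or, as the paper does, simply carry $T_{\mathcal D}$ through both directions.
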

\begin{proof}
Let $\mD^c$ be the bases of a matroid. We want to prove that if $\{i,j\}, \{j,k\}\in \mD$ for some $i,j,k\in [n]$, then $\{i,k\} \in \mD$ or $j \in T_\mD$. Suppose by contradiction $\{i,k\} \in \mD^c$ and $j \not \in T_\mD$. Then $\{j,l\} \in \mD^c$ for some $l \in [n]\backslash\{i,k\}$. By definition of matroid, since $j \in \{j,l\}\backslash \{i,k\}$, either $\{i,j\}$ or $\{j,k\}$ is a basis. This is a contradiction, hence either $\{i,k\}\in \mD$ or $j \in T_\mD$, that is $G_\mD$ has complete connected components.

Assume now that $G_\mD$ has complete connected components. To prove that $\mD^c$ is a bases set of a matroid, we need to show that $\{i,k\}$ or $\{i,l\}$ is in $\mD^c$, if $\{i,j\},\{k,l\}\in \mD^c$. Suppose by contradiction $\{i,k\},\{i,l\}\in \mD$. Then either $i \in T_\mD$ or $\{k,l\}\in \mD$. The latter is not possible since $\{k,l\}\in \mD^c$ and if $i \in T_\mD$, then $\{i,j\}\in \mD$. This is a contradiction with the assumption that $\{i,j\}\in \mD^c$. 
\end{proof}
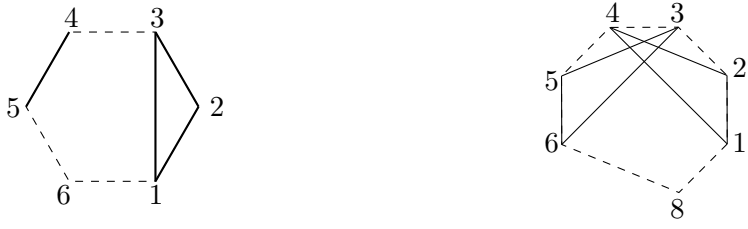
\begin{figure}[h]
    \centering
\begin{tikzpicture}[x=0.5pt,y=0.5pt,yscale=-1,xscale=1][h]
\draw  [thin,dashed] (230.5,186.25) -- (197.88,242.76) -- (132.63,242.76) -- (100,186.25) -- (132.62,129.74) -- (197.88,129.74) -- cycle ;
\draw [thick]    (197.88,129.74) -- (230.5,186.25) ;
\draw [thick]    (230.5,186.25) -- (197.88,242.76) ;
\draw [thick]    (197.88,129.74) -- (197.88,242.76) ;
\draw [thick]    (132.62,129.74) -- (100,186.25) ;
\draw (191,244) node [anchor=north west][inner sep=0.75pt]{$1$};
\draw (237,177.4) node [anchor=north west][inner sep=0.75pt]{$2$};
\draw (192,110) node [anchor=north west][inner sep=0.75pt]{$3$};
\draw (127,110) node [anchor=north west][inner sep=0.75pt]{$4$};
\draw (83,178.4) node [anchor=north west][inner sep=0.75pt]{$5$};
\draw (121,244) node [anchor=north west][inner sep=0.75pt]{$6$};
\draw  [dashed,thin] (630.43,214.46) -- (593.9,251.25) -- (505.25,214.9) -- (505.07,163.04) -- (541.6,126.25) -- (593.46,126.07) -- (630.25,162.6) -- cycle ;
\draw    (630.25,162.6) -- (630.43,214.46) ;
\draw    (541.6,126.25) -- (630.43,214.46) ;
\draw    (541.6,126.25) -- (630.25,162.6) ;
\draw    (505.07,163.04) -- (505.25,214.9) ;
\draw    (593.46,126.07) -- (505.25,214.9) ;
\draw    (505.07,163.04) -- (593.46,126.07) ;
\draw[thick] (632,204.4) node [anchor=north west][inner sep=0.75pt]    {$1$};
\draw[thick] (632,147.4) node [anchor=north west][inner sep=0.75pt]    {$2$};
\draw[thick] (585,106) node [anchor=north west][inner sep=0.75pt]    {$3$};
\draw[thick] (536,106) node [anchor=north west][inner sep=0.75pt]    {$4$};
\draw[thick] (490,157.4) node [anchor=north west][inner sep=0.75pt]    {$5$};
\draw[thick] (490,204.4) node [anchor=north west][inner sep=0.75pt]    {$6$};
\draw[thick] (585,254.4) node [anchor=north west][inner sep=0.75pt]    {$8$};
\end{tikzpicture}    \caption{On the right $G_\mD$ from Example~\ref{eq:small} and on the left $G_\mD$ from Example~\ref{ex:crossing}.}
    \label{fig:examples}
\end{figure}
\vspace{-5mm}
\begin{example}\label{eq:small}
Consider the positroid with bases set $\mathcal{B}$ from Example~\ref{ex:pos}. The corresponding dependent set is $\mathcal{D}=\mB^c=\{\{1,2\}, \{1,3\}, \{2,3\}, \{4,5\}\}$, $T_{\mD}=\emptyset$ and $G_{\mD}$ is the graph depicted in Figure~\ref{fig:examples}(left).
\demo
\end{example}

\begin{example} \label{ex:crossing}
Let $\mathcal{D}=\{\{1,2\},\{2,4\},\{1,4\},\{3,5\},\{3,6\},\{5,6\}\} \cup \{\{7,i\} \mid i \in [8]\backslash\{7\}\}.$
In this case, we see that $T_\mD = \{7\}$ and 
the graph $G_\mD$ can be represented as Figure~\ref{fig:examples}(right).
Note that in this case we ``forget" the vertex $7$ since every element of the form $\{7,i\}$ for $i \in [8],i \neq 7$ is in the set $\mathcal{D}$. The dashed heptagon in the picture represents the polygon $P_{8,\{7\}}$. 

\smallskip

We can see from the graph $G_\mD$ and Lemma~\ref{lemma:completecomp} that $\mD$ is the dependent set of a matroid. One way to determine whether $\mathcal{D}$ is the dependent set of a positroid is to apply the algorithm from \S\ref{sec:Le}, to construct the Le diagram from $\mD^c$ and check if the resulted diagram is a Le diagram, and whether it is indeed the 
Le diagram associated to $\mD^c$. If one of these two conditions is not satisfied, $\mD^c$ cannot be a positroid.
We have $I_1 = \{1,3\}$ and the Le diagram needs to fit inside a $2\times (8-2)$ rectangle. Constructing the associated diagram we obtain the diagram: 

\begin{figure}[h]
    \centering
    \begin{tikzpicture}[scale=0.7]
    \draw (0,0) --(0,2) --(6,2) --(6,1) --(5,1) --(5,0) --cycle;
    \draw (0,1) --(5,1) --(5,2);
    \draw (1,0) --(1,2);
    \draw (2,0) --(2,2);
    \draw (3,0) --(3,2);
    \draw (4,0) --(4,2);
    \draw (0.5,0) node[below]{$8$};
    \draw (1.5,0) node[below]{$7$};
    \draw (2.5,0) node[below]{$6$};
    \draw (3.5,0) node[below]{$5$};
    \draw (4.5,0) node[below]{$4$};
    \draw (5,0.4) node[right]{$3$};
    \draw (5.6,1) node[below]{$2$};
    \draw (6,1.5) node[right]{$1$};
    \draw (0.5,0.5) node{$+$};
    \draw (2.5,0.5) node{$+$};
    \draw (3.5,0.5) node{$+$};
    \draw (0.5,1.5) node{$+$};
    \draw (4.5,1.5) node{$+$};
    \draw (5.5,1.5) node{$+$};
    \draw (1.5,0.5) node{$0$};
    \draw (1.5,1.5) node{$0$};
    \draw (2.5,1.5) node{$0$};
    \draw (3.5,1.5) node{$0$};
    \draw (4.5,0.5) node{$0$};
    \end{tikzpicture} 
\end{figure}
However, this diagram is not a Le diagram, since the box in position $(3,4)$ contains $+$ both in the box over it in the same column and in some boxes to its left on the same row. Hence, $\mathcal{D}^c$ is not a positroid.
\demo\end{example}

\subsection{Combinatorial characterization of positroids
}\label{sec:niceanddim}
We now determine the necessary and sufficient conditions for a set 
to be the dependent set of a positroid.

\smallskip

Let us first fix our notation through this section. Here, $\mD$ is a subset of $\binom{[n]}{2}$.
\begin{notation} \label{notation:D,T,G}
\begin{itemize}
    \item If $T \subset [n]$ and $H=([n],E)$ is a graph, we denote by $H\backslash T$ the restriction of $H$ to $[n]\backslash T$, the edge set of which is given by $E(H\backslash T)= \{\{i,j\}\in E\mid i,j\in [n]\backslash T\}$.
    \item Given $\mD \subset \binom{[n]}{2}$, denote by $c(\mD)$ the number of connected components of the graph $G_\mD$. Note that every singleton vertex of $[n]\backslash T_\mD$ in $G_\mD$ is counted as one connected component.
    \item Finally, given a graph $H$, we denote by $\overline{H}$ the graph obtained from $H$ by adding all the necessary edges to $H$ to obtain a graph whose connected components are complete.
    Accordingly, given $\mathcal{D}\subset \binom{[n]}{2}$, we denote by $\overline{\mD}$ the subset of $\binom{[n]}{2}$ obtained from $\mD$ by adding every pair $\{i,k\}$ to $\overline{\mD}$ if $\{i,j\},\{j,k\}\in\mD$ with $j \not\in T_\mD$. In other words, 
    $\overline{\mD}= \mD \cup E(\overline{G_{\mD}})$.
\end{itemize}
\end{notation}

\begin{definition} \label{def:nice}
A subset $\mathcal{D}\subset \binom{[n]}{2}$ is called \textit{nice} on the cyclically ordered set $[n]$ if the following hold:
\begin{enumerate}
    \item[(1)] Every connected component of the associated graph $G_{\mD}$ is complete;
    \item[(2)] There is no connected component $C$ of $G_{\mathcal D}$ such that $P_{n,T_\mD}\backslash C$ is disconnected.
\end{enumerate}
\end{definition}

Note that the graph $G_{\mathcal D}$ from Example~\ref{ex:crossing} does not satisfy the second property above. For instance, if $C$ is the triangle on the vertices $\{3,5,6\}$, then $P_{8, \{7\}}\backslash C$ has two connected components: one consisting of the vertex $4$ and the other consisting of the part of the polygon comprised of vertices $\{1,2,8\}$. 
\begin{proposition}\label{prop:niceness}
$\mD^c$ is a positroid if and only if $\mD$ is nice.
\end{proposition}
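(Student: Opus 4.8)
The plan is to prove both directions by translating niceness into statements about the Le diagram attached to $\mD^c$. First I would establish the easy direction: if $\mD^c$ is a positroid, then condition (1) holds by Lemma~\ref{lemma:completecomp}, so it remains to show condition (2). I would argue by contrapositive using the cyclic symmetry from Remark~\ref{rmk:cyclicsym}: suppose some connected component $C$ of $G_\mD$ is such that $P_{n,T_\mD}\setminus C$ is disconnected. After a cyclic shift we may assume $C$ has vertices sitting in cyclic position so that two ``gaps'' of the complement appear; I would extract from this a configuration of four indices $a<b<c<d$ (in cyclic order on $[n]\setminus T_\mD$) with $\{a,c\}\in\mD$, $\{b,d\}\notin\mD$ (or the analogous ``crossing'' pattern), and show this forces a $3$-term Pl\"ucker-type sign incompatibility — concretely, that the corresponding putative Le diagram has a $0$-box with a $+$ strictly to its left in its row \emph{and} a $+$ strictly above it in its column, exactly as in Example~\ref{ex:crossing}. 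Since positroids correspond bijectively to genuine Le diagrams (\S\ref{sec:Le}), this contradicts $\mD^c$ being a positroid.

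For the converse, suppose $\mD$ is nice; I want to produce a $2\times n$ matrix with non-negative maximal minors whose dependent $2$-sets are exactly $\mD$. By Remark~\ref{rem:zero_columns} I may set the columns indexed by $T_\mD$ to zero and reduce to the case $T_\mD=\emptyset$, working with the polygon $P_n$ on all $n$ vertices and the graph $G_\mD$ whose components $C_1,\dots,C_{c(\mD)}$ are complete cliques that are ``non-separating'' in $P_n$. The key geometric observation is that non-separating complete components correspond to a \emph{nested/laminar} family of arcs of the cyclic order: each clique $C_i$ occupies a contiguous-in-the-complement set of vertices, and condition (2) means the arcs they span can be linearly ordered by inclusion up to disjointness. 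I would then place the columns as points on the moment curve $t\mapsto (1,t)$ in $\mathbb{R}^2$ (i.e.\ a convex configuration giving all positive $2\times 2$ minors), and collapse the vertices within each clique $C_i$ to a single point — choosing the collapse points themselves in convex position along the curve. Because the family of cliques is laminar with respect to the cyclic order, collapsing them does not create any unwanted collinearities or sign changes: two columns become parallel if and only if they lie in the same clique, and the $2\times 2$ minors of distinct-clique pairs stay strictly positive. This yields a matrix whose maximal minors are all $\ge 0$ and whose dependent pairs are exactly $\mD$, so $\mD^c$ is a positroid.

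The main obstacle I anticipate is making the ``collapse points in convex position'' construction genuinely work with the cyclic order rather than a linear order: a positroid depends on the cyclic order, and a component that is complete in $G_\mD$ but \emph{wraps around} the polygon (so its complement is a single arc but the clique itself is not an arc) must be handled — this is precisely why the non-separating condition (2) is the right hypothesis rather than just ``$C$ is an arc.'' I would handle it by first using the cyclic shift of Remark~\ref{rmk:cyclicsym} to rotate so that a chosen ``outermost'' component becomes an honest contiguous block, then induct on $c(\mD)$ (or on $n$), at each stage peeling off an outermost non-separating clique and contracting it to one vertex; niceness is inherited by the smaller polygon because contracting a non-separating clique cannot make another component separating. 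A secondary technical point is verifying that this contraction/induction keeps track of \emph{exactly} the dependent set $\mD$ (not a larger one), which uses condition (1): no two vertices in different components get identified because the components are already ``saturated'' into complete graphs. Alternatively, one could avoid geometry entirely and run the Le-diagram algorithm of \S\ref{sec:Le} directly — reading off the Grassmann necklace of $\mD^c$ from the laminar arc structure and checking the resulting filling satisfies the Le condition — and I would present whichever of the two is cleaner, likely cross-referencing the computation of $I_1,\dots,I_n$ so that this proof dovetails with the dimension formula in Proposition~\ref{cor:dimension}.
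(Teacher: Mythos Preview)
Your plan is essentially correct but takes a genuinely different route from the paper in both directions. For the forward direction (positroid $\Rightarrow$ nice), the paper does not extract a crossing pattern and invoke a Pl\"ucker relation; instead it runs the Grassmann-necklace-to-Le-diagram algorithm on $\mD^c$ and shows that the positroid one reads back from that diagram contains a pair $\{1,v_{j+1}\}$ that lies in $\mD$, so the algorithm fails to recover $\mD^c$. Your Pl\"ucker idea is cleaner and more elementary: once you have $a<b<c<d$ cyclically with $a,c$ in the offending component $C$ and $b,d\in [n]\setminus(T_\mD\cup V(C))$, the relation $p_{ac}p_{bd}=p_{ab}p_{cd}+p_{ad}p_{bc}$ gives $0=\text{positive}$ because $\{a,b\},\{b,c\},\{c,d\},\{a,d\}$ all cross between $C$ and its complement. (Note you do not actually need $\{b,d\}\notin\mD$.) For the converse, the paper does not build a matrix: it writes down the Le diagram explicitly---after cyclically shifting so that the first component is $\{1,\dots,k\}$---and verifies both the Le condition and, via a three-case analysis of vertex-disjoint path systems, that the diagram encodes exactly $\mD^c$. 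Your moment-curve construction is again more direct and avoids the Le machinery, though the paper's approach has the payoff that the explicit Le diagram is reused verbatim in the proof of the dimension formula in Proposition~\ref{cor:dimension}.

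One point to tighten: your description of the combinatorics of nice sets as a ``nested/laminar family of arcs'' overstates the complexity. Condition~(2) says each component $C_i$ has $P_{n,T_\mD}\setminus C_i$ connected, which is equivalent to $V(C_i)$ being a single cyclic interval of $[n]\setminus T_\mD$. Since the $C_i$ are connected components they \emph{partition} $[n]\setminus T_\mD$, so the structure is simply a partition of the cycle into consecutive arcs---no genuine nesting occurs, and no induction or ``peeling off outermost cliques'' is needed. After one cyclic shift (Remark~\ref{rmk:cyclicsym}) all arcs become honest linear intervals $[1,k_1],[k_1{+}1,k_2],\dots$, and assigning each interval a single increasing $t$-value on the curve $t\mapsto(1,t)^T$ finishes the construction in one step. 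Your worry about a component that ``wraps around but is not an arc'' cannot arise: if the complement of $C$ in the cycle is a single arc, then $C$ is itself a cyclic arc.
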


\begin{proof}
Note that if $T_\mD \neq \emptyset$, by Remark~\ref{rem:zero_columns}, we can consider the set $\mD'=E(G_\mD) = \mD \backslash T_\mD$ with ground set $[n]\backslash T_\mD$, since $\mD^c$ is a positroid if and only if $(\mD')^c$ is a positroid. Moreover, $G_\mD = G_{\mD'}$ which implies that $\mD$ is nice if and only if $\mD'$ is nice. Therefore, we only need to consider the case $T_{\mD}=\emptyset$.

\medskip
First we show that if $\mD$ is not nice, then $\mD^c$ is not a positroid.  We may assume that every connected component of $G_\mD$ is complete. Otherwise, by Lemma~\ref{lemma:completecomp}, $\mD^c$ does not even define a matroid. 
Therefore, we only need to check 
the second condition of Definition~\ref{def:nice}. Let $C$ be a connected component of $G_\mD$.
Note that if $C$ has only one vertex, then $P_n\backslash C$ is connected. Moreover, we can assume that at least two of the vertices of $P_n$ are not in $C$, otherwise
$|P_n\backslash C| \in \{0,1\}$ and both the empty set and single vertex is connected.  In either case, this results in $\mD$ being nice. Therefore, it is sufficient to assume that there is a connected component of $G_\mD$ that consists of more than one, but not all vertices of $P_n$. Note that we can assume $1 \in V(C)$ and $n \not\in V(C)$. Indeed if this is not the case, by Remark~\ref{rmk:cyclicsym}, we can take a cyclic shift of the set $[n]$ so that this property is satisfied. 
Moreover, if $\mD'$ is the resulted set, then under this cyclic shift, $\mD^c$ is a positroid if and only if $(\mD')^c$ is a positroid. In other words, we may write the vertices of $C$ as $1 = v_1< v_2< \dots < v_k< n$. Suppose that $P_n\backslash C$ is disconnected and $C$ is complete. Then there exist some $i \in [n]$ and $j\in [k-1]$ such that $v_j<i<v_{j+1}$. Take the minimal such $i$.

With this assumption, we have that $I_1=\{1,i\}$ gives the row label of the Le diagram associated to $\mD^c$. We want to prove that if $(\mD')^c\subset \binom{[n]}{2}$ is the set associated to this diagram, then $\{1,v_{j+1}\}\in (\mD')^c$, from which it would follow that $\mD^c \neq (\mD')^c$, hence $\mD^c$ is not a positroid. Note that 
we have 
\[ I_1=\{1,i\}\text{ and } I_{v_{j+1}}=\{v_{j+1},k\} \text{ for some $k >v_{j+1}$.} \]
Thus, we need to add a $``+"$ in the box in position $(i,v_{j+1})$. Since there is a path from $i$ to $v_{j+1}$, following the algorithm in \S\ref{sec:Le} to construct a positroid from a Le diagram, we obtain a set $(\mD')^c$ containing the pair $\{1,v_{j+1}\}$. Since $\{1,v_{j+1}\}$ is an edge in $G_\mD$, it follows that $\{1,v_{j+1}\}\not\in \mD^c$, hence $\mD^c$ is not a positroid.

\medskip
Suppose now that $\mD$ is nice. 
Note that if one connected component of $G_\mD$ contains all the vertices $1,\ldots,n$, then $\mD^c =\emptyset$ is a positroid. We can now assume that no connected component $C$ of $G_\mD$ is such that $V(C)=[n]$.
Let $C_1$ be a connected component of $G_\mD$ and we can assume as before, up to relabelling the vertices with a cyclic rotation, that $V(C_1)=\{1,\dots, k\}$ for some $k<n$.
Note that $C_1$ must contain \emph{all} vertices between $1$ and $k$. Otherwise, 
$P_{[n]} \backslash C_1$ has two connected components,  one containing the missing vertex from $[k]$ and one containing $n$.
Since the component $C_1$ is complete, $I_1=\{1,k+1\}$ is the lexicographically minimal element in ${\mathcal{D}}^c$, hence $\{1,k+1\}$ is the column set in the diagram associated to $\mD^c$. Then, we fill the diagram as follows: insert a $``+"$ in every box in position $(1,i)$ for $i\in\{2,\dots,k\}$ and in position $(k+1,j)$ for $j \in \{k+2,\dots,n\}$. Moreover, for every $j\in \{k+1,\dots,n-1\}$, if $\{j,j+1\} \not\in \mD$, insert a $``+"$ in the box. Fill all the remaining empty boxes with a $``0"$. The resulted diagram is of form:
\begin{center}
    \begin{tikzpicture}[scale=0.5][h]
    \draw (0,0) --(0,2) --(8,2) --(8,1) --(4,1) --(4,0) --cycle;
    \draw (0,1) --(4,1) --(4,2);
    \draw (1,0) --(1,2);
    \draw (3,0) --(3,2);
    \draw (5,1) --(5,2);
    \draw (7,1) --(7,2);
    \draw (0.5,0) node[below]{$\textstyle n$};
    \draw (3.5,0) node[below]{$\textstyle k+2$};
    \draw (4,0.5) node[right]{$\textstyle k+1$};
    \draw (7.5,1) node[below]{$\textstyle 2$};
    \draw (8,1.5) node[right]{$\textstyle 1$};
    \draw (0.5,0.5) node{$+$};
    \draw (3.5,0.5) node{$+$};
    \draw (4.5,1.5) node{$+$};
    \draw (7.5,1.5) node{$+$};
    \draw (2,0.5) node{$\dots$};
    \draw (6,1.5) node{$\dots$};
    \end{tikzpicture}
\end{center}
Note that this is a Le diagram since $``0"$s only appear in the top row, satisfying the Le condition.

To check that such a Le diagram is indeed the Le diagram associated to $\mD^c$, we need to show that the diagram admits a vertex disjoint path $(\{1,k+1\}\backslash \{i,j\}, \{i,j\}\backslash\{1,k+1\})$ if and only if $\{i,j\} \in \mD^c$. Note that this occurs if and only if $i$ and $j$ are in different connected components of $G_\mD$. Assume without loss of generality that $i < j$. We may have the following cases: 

\smallskip
\noindent{\bf Case 1.} $1 \leq i, j \leq k$: Note that in this case $i, j \in V(C_1)$. In particular $\{i,j\}\in\mD$. On the other hand, there is no path in the Le diagram.  

\smallskip
\noindent{\bf Case 2.} $i \leq k$ and $j \geq k+1$: Then $i \in V(C_1)$ and $j \not \in V(C_1)$, in particular $\{i,j\}\in\mD^c$. Moreover, there is one path from $1$ turning down at $i$ and one path from $k+1$ turning down at $j$, as desired.

\smallskip
\noindent
\noindent{\bf Case 3.} $k+1 \leq i, j \leq n$: Then $\{i,j\}\in\mD^c$, or equivalently, $i$ and $j$ are in different connected components of $G_\mD$, if and only if there are two vertices $i \leq l < l+1 \leq j$ such that $l$, $l+1$ are in different connected components. In this case, by construction, there is one path from $k+1$ to $i$ and another path from $1$ to $j$ that turns down at $(1, l+1)$, left at $(k+1, l)$, and down at $(k+1, j)$. Conversely, there is no $``+"$ in the box $(1, l+1)$ for any $i < l+1 \leq j$, if and only if $i$ and $j$ are in the same connected component of $G_\mD$. Moreover, in this case any path from $\{1, k\}$ to $\{i, j\}$ must either share a vertex or a cross.
\end{proof}

\begin{remark}
Our characterization can be used to prove that every rank $2$ matroid is representable over $\mathbb{R}$. Recall that this is true for every rank $2$ matroids over any infinite field \cite{oxley2006matroid}. We show that for every matroid $\mM$ there is some (cyclic) order $\prec$ on the ground set $[n]$ such that with respect to that order $\mM$ is a positroid. Indeed, let $\mD$ be the dependent set of $\mM$ and let $C_1,\dots,C_k$ be the connected components of $G_\mD$. Then, since $V(C_1), \dots, V(C_k), T_\mD$ form a partition of $[n]$, we can define an order on $[n]$ such that for every $i<j$, the vertices of $C_i$ appear before the vertices of $C_j$, or equivalently $u < v$ for every $u \in V(C_i)$ and $ v \in V(C_j)$. Moreover, we can assume that the vertices in $T_\mD$ appear at the end. On the other hand, since $\mM$ is a matroid, the connected components of $G_\mD$ are all complete. Furthermore, with respect to this order, each $V(C_i)$ is a cyclic interval in $[n]\backslash T_\mD$, hence $\mD$ is a nice set. 
\medskip

Consider for example the matroid on the ground set $[10]$ whose dependent sets are given by \[\mD=\{\{1,2\},\{1,5\},\{2,5\},\{3,8\},\{3,9\},\{8,9\},\{4,7\},\{4,10\},\{7,10\}\}.\]
We note that $\mD$ is not a nice set with respect to the standard ordering of $[10]$. On the other hand if we define $\prec$ to be the cyclic ordering on $[10]$ defined by$ 1\prec 2\prec5\prec3\prec8\prec9\prec6\prec4\prec7\prec10 $ and denote by $\mD'$ the set $\mD$ considered with respect to the ordering $\prec$, then $\mD'$ is nice and $(\mD')^c$ defines a positroid.
\begin{center}
\tikzset{every picture/.style={line width=0.75pt}} 
\begin{tikzpicture}[x=0.5pt,y=0.5pt,yscale=-1,xscale=1]
\draw  [dashed, thin] (196.5,135.99) -- (185.57,170.21) -- (156.94,191.35) -- (121.56,191.35) -- (92.93,170.21) -- (82,135.99) -- (92.93,101.78) -- (121.56,80.63) -- (156.94,80.63) -- (185.57,101.78) -- cycle ;
\draw    (92.93,101.78) -- (185.57,101.78) ;
\draw    (92.93,101.78) -- (196.5,135.99) ;
\draw    (185.57,101.78) -- (196.5,135.99) ;
\draw    (121.56,80.63) -- (92.93,170.21) ;
\draw    (92.93,170.21) -- (185.57,170.21) ;
\draw    (121.56,80.63) -- (185.57,170.21) ;
\draw    (121.56,191.35) -- (156.94,191.35) ;
\draw    (156.94,80.63) -- (121.56,191.35) ;
\draw    (156.94,80.63) -- (156.94,191.35) ;
\draw  [dashed, thin] (396.5,135.99) -- (385.57,170.21) -- (356.94,191.35) -- (321.56,191.35) -- (292.93,170.21) -- (282,135.99) -- (292.93,101.78) -- (321.56,80.63) -- (356.94,80.63) -- (385.57,101.78) -- cycle ;
\draw    (356.94,80.63) -- (385.57,101.78) ;
\draw    (356.94,80.63) -- (396.5,135.99) ;
\draw    (385.57,101.78) -- (396.5,135.99) ;
\draw    (321.56,80.63) -- (282,135.99) ;
\draw    (282,135.99) -- (292.93,101.78) ;
\draw    (321.56,80.63) -- (292.93,101.78) ;
\draw    (321.56,191.35) -- (356.94,191.35) ;
\draw    (385.57,170.21) -- (321.56,191.35) ;
\draw    (385.57,170.21) -- (356.94,191.35) ;
\draw (199,126.4) node [anchor=north west][inner sep=0.75pt]    {$1$};
\draw (192,92.4) node [anchor=north west][inner sep=0.75pt]    {$2$};
\draw (151,60.4) node [anchor=north west][inner sep=0.75pt]    {$3$};
\draw (115,61.4) node [anchor=north west][inner sep=0.75pt]    {$4$};
\draw (77,92.4) node [anchor=north west][inner sep=0.75pt]    {$5$};
\draw (69,127.4) node [anchor=north west][inner sep=0.75pt]    {$6$};
\draw (77,166.4) node [anchor=north west][inner sep=0.75pt]    {$7$};
\draw (115,193.4) node [anchor=north west][inner sep=0.75pt]    {$8$};
\draw (152.94,194.75) node [anchor=north west][inner sep=0.75pt]    {$9$};
\draw (187.57,166.61) node [anchor=north west][inner sep=0.75pt]    {$10$};
\draw (402,126.4) node [anchor=north west][inner sep=0.75pt]    {$1$};
\draw (393,92.4) node [anchor=north west][inner sep=0.75pt]    {$2$};
\draw (352,60.4) node [anchor=north west][inner sep=0.75pt]    {$5$};
\draw (315,60.4) node [anchor=north west][inner sep=0.75pt]    {$3$};
\draw (277,92.4) node [anchor=north west][inner sep=0.75pt]    {$8$};
\draw (266.94,127.75) node [anchor=north west][inner sep=0.75pt]    {$9$};
\draw (317,194.4) node [anchor=north west][inner sep=0.75pt]    {$4$};
\draw (353,194.4) node [anchor=north west][inner sep=0.75pt]    {$7$};
\draw (388.57,166.61) node [anchor=north west][inner sep=0.75pt]    {$10$};
\draw (280,166.4) node [anchor=north west][inner sep=0.75pt]    {$6$};
\draw (134, 230) node{$G_\mD$};
\draw (334, 230) node{$G_{\mD'}$};
\end{tikzpicture}
\end{center}
\end{remark}

\subsection{Maximal positroids
\label{sec:containment}}
Given a set $\mathcal{D}\subset \binom{[n]}{2}$, we can apply Proposition~\ref{prop:niceness} to determine whether $\mD^c$ is a positroid. When it is not the case, we may ask what are the maximal positroids contained in $\mD^c$, that is we wish to compute the set $\MPos(\mD)$.
There are, in general, different ways to shrink $\mD^c$ to a positroid, and we would like to compute the maximal (with respect to inclusion) such positroids. 
More precisely, we start by computing the maximal matroids contained in $\mD^c$. We then proceed by studying how to construct positroids lying inside a matroid. Furthermore, in Algorithm~\ref{alg:posT} we restrict these operations to compute all the maximal positroids. In particular, we answer Question~\ref{question:max} by studying the minimal dependency data that we need to add to a set $\mD$ so that the resulted set represents a positroid. 
\begin{example}\label{exam:3positroids}
Consider the set $\mD$ from Example~\ref{ex:crossing}, where $\mD^c$ is not a positroid, since the associated diagram in Example~\ref{ex:crossing} is not a Le diagram.
However, note that if the box $(1,4)$ was filled with a ``$0$" instead of a ``$+$", then we would obtain a Le diagram. This corresponds to adding the edge $\{3,4\}$ to the set $\mathcal{D}$. By Lemma~\ref{lemma:completecomp}, if a set $\mD'\supseteq	\mD\cup \{\{3,4\}\}$ represents the dependent sets of a positroid, 
then the connected components of its corresponding graph $G_{\mD'}$ should be complete. There are three ways to obtain such a graph from $G_{\mD}$ leading to three positroids of the following form:
\begin{itemize}
    \item $\mathcal{D}_1= \mathcal{D}\cup \{\{i,j\} \mid i \in \{1,2,4\},\ j\in\{3,5,6\}\}$, where $G_{\mD_1}$ is obtained from $G_\mD$ 
    by connecting the two crossing connected components;
    \item $\mathcal{D}_2 = \mathcal{D} \cup \{\{3,i\}\mid i \in [8]\backslash\{3\}\}$, where $G_{\mD_2}$ is obtained from $G_\mD$ by removing the vertex $3$;
    \item $\mathcal{D}_3 = \mathcal{D} \cup \{\{4,i\}\mid i \in [8]\backslash\{4\}\}$, where $G_{\mD_3}$ is obtained from $G_\mD$ by removing the vertex $4$; 
\end{itemize}
\begin{center}
\tikzset{every picture/.style={line width=0.75pt}}      
\begin{tikzpicture}[x=0.5pt,y=0.5pt,yscale=-1,xscale=1]

\draw  [dashed, thin] (141.43,209.46) -- (104.9,246.25) -- (16.25,209.9) -- (16.07,158.04) -- (52.6,121.25) -- (104.46,121.07) -- (141.25,157.6) -- cycle ;
\draw[thick]    (141.25,157.6) -- (141.43,209.46) ;
\draw[thick]    (52.6,121.25) -- (141.43,209.46) ;
\draw[thick]    (52.6,121.25) -- (141.25,157.6) ;
\draw[thick]    (16.07,158.04) -- (16.25,209.9) ;
\draw[thick]    (104.46,121.07) -- (16.25,209.9) ;
\draw[thick]    (16.07,158.04) -- (104.46,121.07) ;

\draw  [dashed,thin] (330.43,209.46) -- (293.9,246.25) -- (205.25,209.9) -- (205.07,158.04) -- (241.6,121.25) -- (330.25,157.6) -- cycle ;
\draw[thick]    (330.25,157.6) -- (330.43,209.46) ;
\draw[thick]    (241.6,121.25) -- (330.43,209.46) ;
\draw[thick]    (241.6,121.25) -- (330.25,157.6) ;
\draw[thick]    (205.07,158.04) -- (205.25,209.9) ;
 
\draw  [dashed, thin] (530.43,209.46) -- (493.9,246.25) -- (405.25,209.9) -- (405.07,158.04) -- (493.46,121.07) -- (530.25,157.6) -- cycle ;
\draw[thick]    (530.25,157.6) -- (530.43,209.46) ;
\draw[thick]    (405.07,158.04) -- (405.25,209.9) ;
\draw[thick]    (493.46,121.07) -- (405.25,209.9) ;
\draw[thick]    (405.07,158.04) -- (493.46,121.07) ;
\draw[thick]    (52.6,121.25) -- (104.46,121.07) ;
\draw[thick]    (52.6,121.25) -- (16.07,158.04) ;
\draw[thick]    (52.6,121.25) -- (16.25,209.9) ;
\draw[thick]    (104.46,121.07) -- (141.25,157.6) ;
\draw[thick]    (104.46,121.07) -- (141.43,209.46) ;
\draw[thick]    (16.25,209.9) -- (141.43,209.46) ;
\draw[thick]    (16.25,209.9) -- (141.25,157.6) ;
\draw[thick]    (16.07,158.04) -- (141.43,209.46) ;
\draw[thick]    (16.07,158.04) -- (141.25,157.6) ;

\draw  [dashed, thin] (-58.57,209.46) -- (-95.1,246.25) -- (-183.75,209.9) -- (-183.75,158.04) -- (-147.4,121.25) -- (-95.1,121.07) -- (-58.57,157.6) -- cycle ;
\draw[thick]    (-58.57,157.6) -- (-58.57,209.46) ;
\draw[thick]    (-147.4,121.25) -- (-58.57,209.46) ;
\draw[thick]    (-147.4,121.25) -- (-58.57,157.6) ;
\draw[thick]    (-183.75,158.04) -- (-183.75,209.9) ;
\draw[thick]    (-95.1,121.07) -- (-183.75,209.9) ;
\draw[thick]    (-183.75,158.04) -- (-95.1,121.07) ;

\draw (144,203.4) node [anchor=north west][inner sep=0.75pt]    {$1$};
\draw (144,153.4) node [anchor=north west][inner sep=0.75pt]    {$2$};
\draw (97,100) node [anchor=north west][inner sep=0.75pt]    {$3$};
\draw (47,100) node [anchor=north west][inner sep=0.75pt]    {$4$};
\draw (2,152.4) node [anchor=north west][inner sep=0.75pt]    {$5$};
\draw (2,203.4) node [anchor=north west][inner sep=0.75pt]    {$6$};
\draw (94,249.4) node [anchor=north west][inner sep=0.75pt]    {$8$};
\draw (333,203.4) node [anchor=north west][inner sep=0.75pt]    {$1$};
\draw (333,153.4) node [anchor=north west][inner sep=0.75pt]    {$2$};
\draw (236,100) node [anchor=north west][inner sep=0.75pt]    {$4$};
\draw (191,152.4) node [anchor=north west][inner sep=0.75pt]    {$5$};
\draw (191,203.4) node [anchor=north west][inner sep=0.75pt]    {$6$};
\draw (283,249.4) node [anchor=north west][inner sep=0.75pt]    {$8$};
\draw (533,203.4) node [anchor=north west][inner sep=0.75pt]    {$1$};
\draw (533,153.4) node [anchor=north west][inner sep=0.75pt]    {$2$};
\draw (486,100) node [anchor=north west][inner sep=0.75pt]    {$3$};
\draw (391,152.4) node [anchor=north west][inner sep=0.75pt]    {$5$};
\draw (391,203.4) node [anchor=north west][inner sep=0.75pt]    {$6$};
\draw (483,249.4) node [anchor=north west][inner sep=0.75pt]    {$8$};
\draw (-56,203.4) node [anchor=north west][inner sep=0.75pt]    {$1$};
\draw (-56,153.4) node [anchor=north west][inner sep=0.75pt]    {$2$};
\draw (-103,100) node [anchor=north west][inner sep=0.75pt]    {$3$};
\draw (-153,100) node [anchor=north west][inner sep=0.75pt]    {$4$};
\draw (-198,152.4) node [anchor=north west][inner sep=0.75pt]    {$5$};
\draw (-198,203.4) node [anchor=north west][inner sep=0.75pt]    {$6$};
\draw (-103,249.4) node [anchor=north west][inner sep=0.75pt]    {$8$};
\draw (60,280.4) node [anchor=north west][inner sep=0.75pt]    {$G_{\mD_1}$};
\draw (260,280.4) node [anchor=north west][inner sep=0.75pt]    {$G_{\mD_2}$};
\draw (460,280.4) node [anchor=north west][inner sep=0.75pt]    {$G_{\mD_3}$};
\draw (-140,280.4) node [anchor=north west][inner sep=0.75pt]    {$G_{\mD}$};
\end{tikzpicture}
\end{center}
As we see in the graphs above, the sets $\mathcal{D}_1, \mathcal{D}_2, \mathcal{D}_3$ are all nice, hence $\mD_1^c$, $\mD_2^c$, $\mD_3^c$ are
positroids. In particular, these positroids are all  contained in $\mD^c$.  \demo
\end{example}

 Since every positroid is in particular a matroid, we divide the process of finding the positroids contained in a given set $\mD^c$ into two steps: First, we find all the maximal matroids contained in $\mD^c$. Second, we determine all the maximal positroids contained in such matroids. 

\begin{notation}\label{notation:matroidprop}
\begin{itemize}
    \item Given $\mD \subset \binom{[n]}{2}$ and $T\subset [n]$, we define the set $\mD+ T$ as: 
\[ \mD + T := \mD \cup \{\{i,j\}\mid i \in T, j \in [n]\backslash\{i\}\}. \]
Note that we can assume $T \subset [n]\backslash T_\mD$. We also have $T_{\mD+T} = T_{\mD}\cup T$. 
\item Given $\mD \subset \binom{[n]}{2}$ we define the set $\mathbb{T}_\mD$ as the family of subsets $T$ of the vertices in $[n]\backslash T_\mD$ such that $c(\mD+T)$ is strictly larger than $c(\mD+S)$ for every subset $S \subsetneq T$. 
\[
\mathbb{T}_\mD:=\{T|\ T = \emptyset\text{ or }T\subset [n]\backslash T_{\mD}\text{ such that }c(\mD + T) > c(\mD + (T\backslash \{i\})\text{ for every }i\in T\}.
\]
In the terminology of graph theory, $T \in \mathbb{T}_\mD$ if and only if every $i \in T$ is a cut-point of the graph $G_{\mD+(T\backslash\{i\})}$ which is obtained from $G_\mD$ by eliminating the vertices in $T\backslash\{i\}$.
\end{itemize}\end{notation}

Recall $\overline{\mD} = \mD \cup E(\overline{G_\mD})$ from Notation \ref{notation:D,T,G}. 
\begin{proposition} \label{prop:matT}
The family of maximal matroids contained in $\mD^c$ is~given~by:
\[ \Mat(\mD) =\left\{\left(\overline{\mD+T}\right)^c \mid T \in \mathbb{T}_\mD\right\}. \]
\begin{proof}
Note that by construction every element in $\Mat(\mD)$ is a matroid. Let us start by proving that each $\mathcal{F}^c \in \Mat(\mD)$ is one of the maximal matroids contained in $\mD^c$. Let $\mathcal{F} = \overline{\mD +T}$ for some $T \in \mathbb{T}_\mD$. Suppose by contradiction that there exists $\mathcal{F}'$ with $\mD \subset \mathcal{F'} \subsetneq \mathcal{F}$ such that $(\mathcal{F}')^c$ is a matroid. Then, by Lemma~\ref{lemma:completecomp}, the connected components of the graphs $G_{\mathcal{F}'}$ and $G_{\mathcal{F}}$ are complete. Since $\mathcal{F}' \neq \mathcal{F}$, then $T_\mD \subseteq T_{\mathcal{F}'} \subsetneq T_{\mathcal{F}} = T\cup T_\mD$. Recall that $T\in\mathbb{T}_\mD$. Therefore, by definition of $\mathbb{T}_\mD$, for any $i \in T \backslash T_{\mathcal{F}'}$, we have that $i \in [n] \setminus T_\mD$. Thus, $c(\mD + T) > c(\mD +(T\backslash \{i\})) \geq c(\mD + T_{\mathcal{F}'})$.  
Let $C$ be the connected component in $G_{\mD+T_{\mathcal{F}'}}$
containing $i$. Note that by the previous inequality 
if we remove $T\backslash T_{\mathcal{F}'}$ from $C$, $C$ will be divided in at least two components $C_1$ and $C_2$, such that $\overline{C_1}\cup \overline{C_2} \subset G_{\mathcal{F}}$. For any $j_1 \in V(C_1)$ and $j_2 \in V(C_2)$, we have that $j_1,j_2 \in V(C)$ and since $C$ is a connected component in $G_{\mathcal{F}'}$ which is complete, we must have $\{j_1,j_2\} \in \mathcal{F'}$. On the other hand, $j_1$ and $j_2$ lie in 
different connected components in $G_\mathcal{F}$ and $j_1, j_2 \not\in T_\mathcal{F}$, hence $\{j_1, j_2\} \not \in \mathcal{F}$. This contradicts with the assumption that $\mathcal{F}' \subseteq \mathcal{F}$, so the claim follows.
\medskip

It remains to prove that all the maximal matroids contained in $\mD^c$ are of this form. Let $\mathcal{F} \supset \mD$ be such that $\mathcal{F}^c$ is a matroid and there is no $\mD \subseteq \mathcal{F'} \subsetneq \mathcal{F}$ such that $(\mathcal{F}')^c$ is a matroid. Let $T = T_\mathcal{F}\backslash T_\mD$. It is enough to prove that $T \in \mathbb{T}_\mD$. If $T=\emptyset$, then $T \in \mathbb{T}_\mD$. Assume that $T \neq \emptyset$, and suppose by contradiction that there is  $i \in T$ such that $c(\mD+T) = c(\mD+(T\backslash\{i\}))$ (that is, that $T \not \in \mathbb{T}_\mD$). Let $\mathcal{F}' = \overline{\mD + (T\backslash\{i\})}$ and let $C'$ be the connected component of $G_{\mathcal{F}'}$ containing $i$. We want to show that $\mathcal{F}' \subsetneq \mathcal{F}$ and $(\mathcal{F}')^c$ is a matroid. Note that the latter follows directly from Lemma~\ref{lemma:completecomp}, as we took the completion of the connected components in the graph $G_{\mathcal{F}'}$. It remains to prove that $E(C') \subset \mathcal{F}$. For every edge of the form $\{i,k\} \in E(C')$ for some $k \in V(C')\backslash\{i\}$, we have that $\{i,k\} \in \mathcal{F}$ since $i \in T_{\mathcal{F}}$. Now let $\{j,k\} \in E(C')$ with $j,k \in V(C')\backslash\{i\}$. Since removing $i$ did not change the number of connected components, the vertices $j$ and $k$ must lie in the same connected component of $G_\mathcal{F}$. This connected component is complete by Lemma~\ref{lemma:completecomp}, hence $\{j,k\} \in \mathcal{F}$. This is in contradiction with the assumption that $\mathcal{F}^c$ is one of the maximal positroids contained in $\mD^c$. Thus $c(\mD+T) > c(\mD+ (T\backslash \{i\})$ for every $i\in T$, and so $T\in \mathbb{T}_\mD$.
\end{proof}
\end{proposition}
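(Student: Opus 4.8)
The plan is to work entirely through Lemma~\ref{lemma:completecomp}: for $\mathcal{E}\supseteq\mD$ the complement $\mathcal{E}^c$ is a matroid exactly when every connected component of $G_{\mathcal{E}}$ is complete. So enlarging $\mD$ to the dependent set of a matroid amounts to two graph operations on $G_\mD$ -- \emph{completing} a component, and \emph{deleting} a vertex, i.e.\ passing from $\mD$ to $\mD+\{i\}$, which turns $i$ into a loop. The set $\mD+T$ deletes the vertices of $T$ and $\overline{\mD+T}$ then completes the surviving components, while the condition $T\in\mathbb{T}_\mD$ -- in the form ``$c(\mD+S)<c(\mD+T)$ for all $S\subsetneq T$'' of Notation~\ref{notation:matroidprop} -- records that no deletion in $T$ is redundant. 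I will prove the two inclusions of the asserted equality.

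\emph{Each listed set is a maximal matroid in $\mD^c$.} Fix $T\in\mathbb{T}_\mD$ and put $\mathcal{F}=\overline{\mD+T}$. By construction the components of $G_{\mathcal{F}}$ are complete, so $\mathcal{F}^c$ is a matroid, and $\mD\subseteq\mathcal{F}$ gives $\mathcal{F}^c\subseteq\mD^c$. For maximality, I would assume $\mD\subseteq\mathcal{F}'\subsetneq\mathcal{F}$ with $(\mathcal{F}')^c$ a matroid and derive a contradiction. From $\mD\subseteq\mathcal{F}'\subseteq\mathcal{F}$ one gets $T_\mD\subseteq T_{\mathcal{F}'}\subseteq T_{\mathcal{F}}=T_\mD\cup T$, and the first point is to check that this inclusion is proper: if $T_{\mathcal{F}'}=T_{\mathcal{F}}$, then on the common vertex set $[n]\setminus T_{\mathcal{F}}$ the graph $G_{\mathcal{F}'}$ is a union of complete components containing $G_{\mD+T}$, and since each component of $G_{\mD+T}$ is already connected through $\mD$-edges, $G_{\mathcal{F}'}$ is forced to re-complete each of them, giving $G_{\mathcal{F}'}=G_{\mathcal{F}}$ and hence $\mathcal{F}'=\mathcal{F}$. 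So pick $i\in T\setminus T_{\mathcal{F}'}$ and set $S=T_{\mathcal{F}'}\setminus T_\mD\subsetneq T$; then $c(\mD+S)<c(\mD+T)$ says that deleting the vertices of $T\setminus T_{\mathcal{F}'}$ from $G_{\mD+T_{\mathcal{F}'}}$ strictly increases the number of components, so some component of $G_{\mD+T_{\mathcal{F}'}}$ splits. Choosing $j_1,j_2$ in two of the pieces, they lie in one complete component of $G_{\mathcal{F}'}$ (since $G_{\mD+T_{\mathcal{F}'}}\subseteq G_{\mathcal{F}'}$), so $\{j_1,j_2\}\in\mathcal{F}'$, whereas $j_1,j_2$ lie in distinct components of $G_{\mathcal{F}}$ with $j_1,j_2\notin T_{\mathcal{F}}$, so $\{j_1,j_2\}\notin\mathcal{F}$ -- contradicting $\mathcal{F}'\subseteq\mathcal{F}$.

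\emph{Every maximal matroid in $\mD^c$ is listed.} Given $\mathcal{F}\supseteq\mD$ with $\mathcal{F}^c$ a maximal matroid in $\mD^c$, set $T=T_{\mathcal{F}}\setminus T_\mD$. Then $\mD+T\subseteq\mathcal{F}$ since $T\subseteq T_{\mathcal{F}}$, and $\overline{\mD+T}\subseteq\mathcal{F}$ since $G_{\mathcal{F}}$ has complete components and contains $G_{\mD+T}$; as $(\overline{\mD+T})^c$ is then a matroid with $\mathcal{F}^c\subseteq(\overline{\mD+T})^c\subseteq\mD^c$, maximality of $\mathcal{F}^c$ forces $\mathcal{F}=\overline{\mD+T}$. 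It remains to show $T\in\mathbb{T}_\mD$, which I would do by contradiction: if some $i\in T$ had $c(\mD+T)=c(\mD+(T\setminus\{i\}))$, put $\mathcal{F}'=\overline{\mD+(T\setminus\{i\})}$. Then $(\mathcal{F}')^c$ is a matroid containing $\mD$, and $\mathcal{F}'\subsetneq\mathcal{F}$: every edge $\{i,k\}$ of $G_{\mathcal{F}'}$ lies in $\mathcal{F}$ because $i\in T_{\mathcal{F}}$, and for an edge $\{j,k\}$ of the component of $G_{\mathcal{F}'}$ through $i$ with $j,k\neq i$, deleting $i$ left that component connected, so $j,k$ share a complete component of $G_{\mathcal{F}}$ and $\{j,k\}\in\mathcal{F}$, while $i\in T_{\mathcal{F}}\setminus T_{\mathcal{F}'}$ makes the inclusion strict. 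This contradicts the maximality of $\mathcal{F}^c$, so $T\in\mathbb{T}_\mD$ and $\mathcal{F}^c=(\overline{\mD+T})^c\in\Mat(\mD)$.

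The step I expect to be the main obstacle is the first one: reducing to the case $T_{\mathcal{F}'}\subsetneq T_{\mathcal{F}}$, and then extracting the witnessing pair $\{j_1,j_2\}$ from the strict increase in the component count. Both require carefully separating the roles of the loop set $T_{\mathcal{E}}$ and the residual graph $G_{\mathcal{E}}$, and using that each component of $G_{\mD+T}$ is connected by $\mD$-edges -- so that no ``hidden'' refinement of the clique partition can sit inside $\mathcal{F}'$. Everything else is routine bookkeeping with Lemma~\ref{lemma:completecomp} and the two equivalent forms of $\mathbb{T}_\mD$ in Notation~\ref{notation:matroidprop}.
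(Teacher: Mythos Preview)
Your proposal is correct and follows essentially the same two–direction strategy as the paper's proof, via Lemma~\ref{lemma:completecomp} and the cut-vertex interpretation of $\mathbb{T}_\mD$. If anything, your organization is slightly cleaner: you explicitly dispose of the case $T_{\mathcal{F}'}=T_{\mathcal{F}}$ in the first direction (the paper simply asserts $T_{\mathcal{F}'}\subsetneq T_{\mathcal{F}}$), and in the second direction you first establish $\mathcal{F}=\overline{\mD+T}$ before proving $T\in\mathbb{T}_\mD$, which the paper leaves implicit in its ``it is enough to prove'' clause.
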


\begin{example}
Let $\mD = \{\{2,3\},\{2,4\},\{2,6\}, \{3,4\}\} \subset \binom{[6]}{2}$. Then $\mathbb{T}_\mD=\{\emptyset, \{2\}\}$ and $ \Mat(\mD)=\{\overline{\mD},\ \overline{\mD+\{2\}}\}$. In particular, the corresponding graphs are:
\begin{center}
\tikzset{every picture/.style={line width=0.75pt}}
\begin{tikzpicture}[x=0.5pt,y=0.5pt,yscale=-1,xscale=1]
\draw[dashed,thin] (-47.5,170.5) -- (-76,219.43) -- (-132.25,219.43) -- (-160.5,170.5) -- (-132.25,121.57) -- (-75.75,121.57) -- cycle ;
\draw[thick] (-132.25,121.57) -- (-75.75,121.57) ;
\draw[thick] (-132.25,121.57) -- (-160.5,170.5) ;
\draw[thick] (-160.5,170.5) -- (-75.75,121.57) ;
\draw[thick] (-76.25,121.57) -- (-75.75,219.43) ;
\draw (-40,162.4) node [anchor=north west][inner sep=0.75pt]    {$1$};
\draw (-83,100) node [anchor=north west][inner sep=0.75pt]    {$2$};
\draw (-138,100) node [anchor=north west][inner sep=0.75pt]    {$3$};
\draw (-175,162.4) node [anchor=north west][inner sep=0.75pt]    {$4$};
\draw (-138,223.4) node [anchor=north west][inner sep=0.75pt]    {$5$};
\draw (-83,223.4) node [anchor=north west][inner sep=0.75pt]    {$6$};
\draw (-107,260) node{$G_\mD$};

\draw  [dashed, thin] (137.5,170.5) -- (109.25,219.43) -- (52.75,219.43) -- (24.5,170.5) -- (52.75,121.57) -- (109.25,121.57) -- cycle ;
\draw[thick]    (52.75,121.57) -- (109.25,121.57) ;
\draw[thick]    (52.75,121.57) -- (24.5,170.5) ;
\draw[thick]    (24.5,170.5) -- (109.25,121.57) ;
\draw[thick]    (109.25,121.57) -- (109.25,219.43) ;
\draw[thick]    (24.5,170.5) -- (109.25,219.43) ;
\draw[thick]    (52.75,121.57) -- (109.25,219.43) ;

\draw  [dashed, thin] (323.5,170.5) -- (295.25,219.43) -- (238.75,219.43) -- (210.5,170.5) -- (238.75,121.57) -- cycle ;
\draw[thick]    (238.75,121.57) -- (210.5,170.5) ;
\draw (142,162.4) node [anchor=north west][inner sep=0.75pt]    {$1$};
\draw (102,102) node [anchor=north west][inner sep=0.75pt]    {$2$};
\draw (47,102) node [anchor=north west][inner sep=0.75pt]    {$3$};
\draw (10,162.4) node [anchor=north west][inner sep=0.75pt]    {$4$};
\draw (47,223.4) node [anchor=north west][inner sep=0.75pt]    {$5$};
\draw (102,223.4) node [anchor=north west][inner sep=0.75pt]    {$6$};
\draw (78,260) node{$G_{\overline{\mD}}$};
\draw (327,162.4) node [anchor=north west][inner sep=0.75pt]    {$1$};
\draw (232,102) node [anchor=north west][inner sep=0.75pt]    {$3$};
\draw (196,162.4) node [anchor=north west][inner sep=0.75pt]    {$4$};
\draw (232,223.4) node [anchor=north west][inner sep=0.75pt]    {$5$};
\draw (291,223.4) node [anchor=north west][inner sep=0.75pt]    {$6$};
\draw (263,260) node{$G_{\overline{\mD+\{2\}}}$};
\end{tikzpicture}
\end{center}
Note that $\mD + \{6\}$ is also the dependent set of a matroid, but $\overline{\mD}\subsetneq \mD+\{6\}$. \demo
\end{example}


From now on we may assume that $\mD$ is the dependent set of a matroid, i.e.~$\mD = \overline{\mD}$. Our goal is to provide an algorithm for computing $\MPos(\mD)$. 
To do so, we first give a combinatorial condition for the containment between dependent sets of a pair of matroids in terms of their associated graphs. 
\begin{lemma}\label{lemma:inclusion}
Let $\mD$ and $\mF$ be the dependent sets of two matroids. Then $\mD \subset \mF$ if and only if: \begin{itemize}
    \item[{\rm (a)}] $T_\mD \subset T_\mF$ and
    \item[{\rm (b)}] for every connected component $C$ of $G_\mD$ there exists a connected component $C'$ of $G_\mF$ such that $V(C)\backslash T_{\mF} \subset V(C')$.
\end{itemize}
\end{lemma}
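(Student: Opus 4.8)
The plan is to prove both directions by translating between the matroid structure and the graph structure, using the fact (from Remark~\ref{rem:zero_columns} and Lemma~\ref{lemma:completecomp}) that for a matroid the pair $\{i,j\}$ is dependent exactly when $i\in T$, or $j\in T$, or $i,j$ lie in the same connected component of the graph on $[n]\setminus T$, and that those components are complete. So the containment $\mD\subset\mF$ is really a statement about how the partition of $[n]$ into $T_\mD$ plus the vertex sets of components of $G_\mD$ refines into the analogous partition coming from $\mF$.

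For the forward direction, assume $\mD\subset\mF$. First I would establish (a): if $i\in T_\mD$ then $\{i,j\}\in\mD\subset\mF$ for every $j\neq i$, which is exactly the condition $i\in T_\mF$ by the definition of $T_\mF$ (Definition~\ref{def:graphs}); hence $T_\mD\subset T_\mF$. Then for (b), take a connected component $C$ of $G_\mD$ and any two vertices $j_1,j_2\in V(C)\setminus T_\mF$. Since $C$ is complete (Lemma~\ref{lemma:completecomp}), either $j_1=j_2$ or $\{j_1,j_2\}\in\mD\subset\mF$; either way, since $j_1,j_2\notin T_\mF$, this forces $j_1$ and $j_2$ to be in the same connected component of $G_\mF$. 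As $V(C)$ is connected in $G_\mD$ and this holds for all such pairs, all of $V(C)\setminus T_\mF$ lies in one single component $C'$ of $G_\mF$, which gives (b). One small point to handle carefully: edges $\{i,k\}$ of $C$ with $i\in T_\mF$ don't directly give component-of-$G_\mF$ information, which is exactly why (b) only asks about $V(C)\setminus T_\mF$; I need to make sure the connectivity argument survives deleting those vertices, but since we only need all of $V(C)\setminus T_\mF$ to land in one component and any two of them are adjacent in $G_\mF$ (or equal), this is immediate.

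For the converse, assume (a) and (b) and take $\{i,j\}\in\mD$; I want $\{i,j\}\in\mF$. If $i\in T_\mD$ then $i\in T_\mF$ by (a), so $\{i,j\}\in\mF$; similarly if $j\in T_\mD$. Otherwise $i,j\in[n]\setminus T_\mD$ and $\{i,j\}$ is an edge of $G_\mD$, so $i$ and $j$ lie in a common component $C$ of $G_\mD$. If either $i$ or $j$ is in $T_\mF$ we are done again. If neither is, then $i,j\in V(C)\setminus T_\mF$, so by (b) they lie in a common component $C'$ of $G_\mF$; since $\mF=\overline{\mF}$, that component is complete, so $\{i,j\}\in\mF$. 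This covers all cases, giving $\mD\subset\mF$.

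I do not expect a serious obstacle here — the argument is essentially bookkeeping with the partition-refinement picture. The one subtlety worth being explicit about (and the closest thing to a ``hard part'') is the interaction with vertices of $T_\mF\setminus T_\mD$: these are vertices present in $G_\mD$ but absent in $G_\mF$, and a component of $G_\mD$ can be split by their removal into several components of $G_\mF$ — which is precisely why (b) is phrased with the intersection $V(C)\setminus T_\mF$ rather than all of $V(C)$, and why we get containment into a single $C'$ rather than equality. Keeping the quantifiers straight (``there exists $C'$ containing all of $V(C)\setminus T_\mF$'' versus ``every vertex of $V(C)$ is in some component'') is the only place to be attentive.
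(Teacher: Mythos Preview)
Your proposal is correct and follows essentially the same argument as the paper: both directions unpack the definition of $T_\bullet$ and use that the components of $G_\mD$ and $G_\mF$ are complete (Lemma~\ref{lemma:completecomp}) to pass between ``$\{i,j\}$ dependent'' and ``$i,j$ in the same component or one of them in $T$''. Your write-up is slightly more explicit than the paper's in separating the cases $i\in T_\mD$ versus $i\in T_\mF\setminus T_\mD$ in the converse, but the logic is identical. One minor remark on your closing commentary: since each component $C$ of $G_\mD$ is a clique, deleting the vertices of $T_\mF$ cannot actually split $V(C)\setminus T_\mF$ into several pieces --- any two surviving vertices remain adjacent --- so the ``can be split'' phrasing is not quite accurate, though it does not affect your proof.
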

\begin{proof}
Suppose $\mD \subset \mF$. Then (a) holds by definition. 
Moreover, every connected component $C$ of $G_\mD$ is complete, since $\mD^c$ is a matroid and $\{i,j\}\in \mD \subset \mF$ for every $i,j \in V(C)$. Then every pair of vertices $i,j \in V(C)\backslash T_{\mF}$ are connected in $G_\mF$ and so there exists a component $C'$ of $G_\mF$ with $V(C)\backslash T_\mF \subset V(C')$.
Suppose now that $\mD$ and $\mF$ are dependent sets of matroids such that (a) and (b) hold. 
Consider $\{i,j\}\in \mD$. If $i \in T_\mF$, then $\{i,j\}$ must be in $\mF$. If none of the indices $i,j$ are in $T_\mF$ then $i,j \in V(C)$ for some connected component $C$ of $G_\mD$, and by assumption there exists a connected component $C'$ of $G_\mF$ such that $i,j \in V(C)\backslash T_\mF \subset V(C')$. Since $C'$ is complete because $\mF$ is a matroid, then $\{i,j\}\in \mF$.
\end{proof}

Let us now fix our notation used in Algorithm~\ref{alg:posT} and throughout the rest of this section.

\begin{notation}\label{not:algorithm} 
\begin{itemize}
    \item For $\mathcal{D}\subset \binom{[n]}{2}$ and $C$ a connected component of $G_\mD$, we write
    \begin{align*}
        & P_{n,T_\mD}\backslash C = D_1 \sqcup \dots \sqcup D_k\quad\text{and}\quad C |_{P_{n,T_\mD}} = F_1 \sqcup \dots \sqcup F_k,
    \end{align*}
    where $D_1, \dots, D_k$ are the connected components of $P_{n,T_\mD}\backslash C$ and $F_1,\dots, F_k$ are those of
    $C |_{P_{n,T_\mD}}$. Note that each $D_i$ is a cyclic interval in $[n]\backslash(T_\mD\cup V(C))$ and $P_{n,T_\mD}\backslash C $ has the same number of connected components as $C |_{P_{n,T_\mD}}$. Moreover $P_{n,T_\mD} \backslash C$ is connected if and only if $k=1$. 
\item For any subgraph $D$ of $P_{n,T_\mD}$ and any connected component $C$ of $G_\mD$, we define 
\[C\cdot D=\{\{i,j\}\mid i \in V(C), j \in V(D)\}\]
which is obtained by connecting the vertices of $C$ to those of $D$. In the sequel, we will study~dependent sets of the form $\overline{\mD\cup (C\cdot D)}$ whose associated graphs are obtained from $G_\mD$ by connecting the vertices of $C$ to those of $D$ and then completing each connected component in the resulted graph.
\end{itemize}
\end{notation}

Note that in the notation above, $D$ can be \emph{any} subgraph of $P_{n,T_\mD}$, not just one of the connected components $D_i$. In particular, we may consider $D$ to either be one of the $D_i$ or a single vertex. Furthermore, to simplify our notation, we omit the closure and simply write $\mD\cup (C\cdot D)$.

\begin{lemma}\label{lem:twooperations}
Let $\mathcal{D}\subset\mathcal{D'} \subset \binom{[n]}{2}$ be dependent sets of two matroids such that $\mD^c$ is 
not a positroid and $(\mD')^c$ is a positroid. Then there exists a connected component $C$ of $G_\mD$ such that $P_{n,T_\mD}\backslash C$ is disconnected. Moreover there exists $\{i,j\} \in \mD'\backslash \mD$ for some $i \in V(C)$.
\end{lemma}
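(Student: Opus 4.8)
The plan is to get the first claim immediately from the niceness criterion and the ``moreover'' part by contradiction. Since $\mD$ is the dependent set of a matroid, every connected component of $G_\mD$ is complete (Lemma~\ref{lemma:completecomp}), so the first condition of Definition~\ref{def:nice} holds for $\mD$; because $\mD^c$ is not a positroid, Proposition~\ref{prop:niceness} forces the second condition to fail, i.e.\ there is a connected component $C$ of $G_\mD$ with $P_{n,T_\mD}\backslash C$ disconnected. I will in fact establish the ``moreover'' part for \emph{every} such $C$, so I simply fix one.

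Write $P_{n,T_\mD}\backslash C = D_1\sqcup\cdots\sqcup D_k$ and $C|_{P_{n,T_\mD}}=F_1\sqcup\cdots\sqcup F_k$ with $k\ge 2$; recall that around the cyclic polygon $P_{n,T_\mD}$ the arcs $F_i$ and $D_j$ alternate, so on the polygon between any two distinct $D_i$ one always meets some $F_j$. Suppose for contradiction that no edge of $\mD'\backslash\mD$ has an endpoint in $V(C)$. Note first that any edge of $\mD'$ incident to $V(C)$ but not already in $\mD$ must join $V(C)$ to $\bigcup_i D_i$: all pairs inside $V(C)$ are edges of $\mD$ since $C$ is complete, and all pairs $\{v,t\}$ with $v\in V(C)$, $t\in T_\mD$ are in $\mD$ by definition of $T_\mD$. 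Combining this with the hypothesis I would deduce two facts: (i) $V(C)\cap T_{\mD'}=\emptyset$, for otherwise a vertex $v_0\in V(C)\cap(T_{\mD'}\backslash T_\mD)$ would yield $\{v_0,w\}\in\mD'\backslash\mD$ for any $w\in D_1$, an edge meeting $V(C)$; and (ii) no $D_i$ is contained in $T_{\mD'}$, for otherwise a vertex $v\in D_i\cap(T_{\mD'}\backslash T_\mD)$ would yield $\{v,u\}\in\mD'\backslash\mD$ for any $u\in V(C)$, again an edge meeting $V(C)$. (Lemma~\ref{lemma:inclusion} is convenient here for bookkeeping, recording $T_\mD\subseteq T_{\mD'}$ and that $C$ sits inside a single connected component of $G_{\mD'}$.)

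Granting (i) and (ii), the argument closes as follows. By (i) and the hypothesis every vertex of $V(C)$ has the same neighbours in $G_{\mD'}$ as in $G_\mD$, so $V(C)$ is exactly the vertex set of a complete connected component $C'$ of $G_{\mD'}$. The vertex set of $P_{n,T_{\mD'}}\backslash C'$ is then $\bigl(\bigcup_i D_i\bigr)\backslash T_{\mD'}$, since by (i) the new loops $T_{\mD'}\backslash T_\mD$ avoid $V(C)$; by (ii) each $D_i\backslash T_{\mD'}$ is non-empty; and since by (i) the arcs $F_j\subseteq V(C)$ survive intact in $P_{n,T_{\mD'}}$ and still separate the $D_i$ cyclically, no polygon edge of $P_{n,T_{\mD'}}$ joins $D_i\backslash T_{\mD'}$ to $D_j\backslash T_{\mD'}$ for $i\ne j$. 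Hence $P_{n,T_{\mD'}}\backslash C'$ has at least $k\ge 2$ connected components, so $\mD'$ violates the second condition of Definition~\ref{def:nice}; by Proposition~\ref{prop:niceness} this contradicts $(\mD')^c$ being a positroid. Therefore some edge of $\mD'\backslash\mD$ has an endpoint in $V(C)$.

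I expect step (ii) to be the crux: one must preclude the possibility that the positroid $(\mD')^c$ ``repairs'' the non-niceness of $\mD$ not by attaching an obstructing pocket $D_i$ to $V(C)$ but by collapsing $D_i$ entirely into loops, and the observation that such a collapse itself forces an edge of $\mD'\backslash\mD$ incident to $V(C)$ is exactly what makes the contradiction go through. The remaining work is routine accounting of which pairs lie in $\mD$ versus $\mD'$ and of how $P_{n,T_{\mD'}}$ differs from $P_{n,T_\mD}$ (only vertices interior to a single $D_i$ get absorbed), which Lemma~\ref{lemma:inclusion} streamlines.
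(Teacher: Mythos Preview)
Your proof is correct and follows essentially the same strategy as the paper: obtain the bad component $C$ from Proposition~\ref{prop:niceness}, then argue by contradiction that $C$ persists as an obstructing component in $G_{\mD'}$. The only difference is in bookkeeping: the paper shows directly that $T_\mD=T_{\mD'}$ (your arguments for (i) and (ii) in fact prove this too, since any single vertex of $\bigcup_i D_i$ landing in $T_{\mD'}\backslash T_\mD$ already yields an edge of $\mD'\backslash\mD$ meeting $V(C)$), after which $P_{n,T_{\mD'}}\backslash C=P_{n,T_\mD}\backslash C$ is immediate. You instead state only the weaker conclusions needed and then verify separation in $P_{n,T_{\mD'}}$ by hand; this is fine but slightly longer than necessary.
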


\begin{proof}
The first assertion follows by Proposition~\ref{prop:niceness} as $\mD^c$ is not a positroid. Let $C$ be such a connected component. 
Assume by contradiction that there is no $\{i,j\}$ in $\mD'\backslash\mD$ for some vertex $i\in V(C)$.  
Then we must have $T_\mD = T_{\mD'}$. Indeed, suppose by contradiction that there exists $j \in T_{\mD'}\backslash T_\mD$. If $j \in V(C)$ then since $j \not\in T_\mD$ there exists $k \in [n]\backslash (T_{\mD}\cup V(C))$ such that $\{j,k\}\not\in \mD$ but $\{j,k\}\in \mD'$. If $j \not\in V(C)$ then $\{i,j\}\in \mD'\backslash \mD$ for every $i \in V(C)$.
Moreover, $C$ is a connected component in $G_{\mD'}$ since, by assumption, no edge touching $C$ was added in $\mD'$. Hence $P_{n,T_{\mD'}} \backslash C = P_{n,T_\mD}\backslash C$, which is disconnected. This contradicts with the assumption that $(\mD')^c$ is a positroid, by Proposition~\ref{prop:niceness}.
\end{proof}
\begin{remark}\label{rmk:twooperations}
Given $\{i,j\}\in \mathcal{D}'\backslash \mathcal{D}$ as in Lemma~\ref{lem:twooperations}, 
we either have $\mD \cup (C\cdot\{j\}) \subseteq \mathcal{D'}$ or $i \in T_{\mathcal{D'}}$ by Lemma~\ref{lemma:completecomp}. Note that we do not need to consider the case $j \in T_{\mD'}$, since we are only interested in determining the maximal positroids and in this case $\mD \cup (C \cdot \{j\}) \subsetneq \mD+\{j\}$. Having only these two cases shows that we can construct a nice set $\mD' \supset \mD$ by performing a sequence of operations as follows: 
\begin{itemize}
    \item[(1)] Let $C$ be a connected component of $G_\mD$ such that $P_{n,T_\mD}\backslash C$ is disconnected. Connect $C$ to a vertex $j \in [n]\backslash (T_\mD \cup V(C))$ and add all the edges to obtain a complete component. Then we obtain the set $\mD \cup (C\cdot\{j\})$; 
    \item[(2)] Given a vertex $i \in V(C)$, where $C$ is a component of $G_\mD$ such that $P_{n,T_\mD}\backslash C$ is disconnected, add the vertex $i$ to the vanishing set $T_\mD$. Then we obtain the set $\mD+\{i\}$. Note that $C$ is no longer a connected component of $G_{\mD+\{i\}}$, but its restriction to the vertex set $V(C) \backslash \{i\}$ is.
\end{itemize}
\end{remark}
\begin{remark}
Since we are {\em only} interested to obtain the {\em maximal} positroids contained in a given matroid, we can further restrict the two operations from Remark~\ref{rmk:twooperations}
to the following two operations:
\begin{enumerate}[label=(\alph*)]
    \item\label{operation:connection} Given a connected component $C$  of $G_{\mD}$ such that $P_{n,T_\mD}\backslash C$ is disconnected, and a connected component $D$ of $P_{n,T_\mD}\backslash C$, we let $\mD'= \mD \cup (C\cdot D)$;
    \item\label{operation:elimination} Given a connected component $C$ of $G_{\mD}$ such that $P_{n,T_\mD}\backslash C$ is disconnected, and a connected component $F$ of $C|_{P_{n,T_\mD}}$, we let $\mD' = \mD+F$.
\end{enumerate}
If $\mF \supsetneq \mD$ is the dependent set of a positroid contained in the matroid $\mD^c$ where $\mF$ is constructed by connecting a connected component $C$ of $G_\mD$ to a component strictly contained in a connected component $D\subsetneq D_i$ of $P_{n,T_\mD}\backslash C$, then we can obtain a smaller set $\mF'$ with $\mF \supsetneq \mF' \supsetneq \mD$ such that $(\mF')^c$ is a positroid. Similarly, if $\mF \supsetneq \mD$ is the dependent set of a positroid and $\mF$ is constructed by enlarging 
the vanishing set $T$ by adding a component strictly contained in a connected component $F \subsetneq F_i$ of $C|_{P_{n,T_\mD}}$ to $T$, we obtain a smaller set $\mF'$ with $\mF \supsetneq \mF' \supsetneq \mD$ such that $(\mF')^c$ is a positroid. This will be proved in Proposition~\ref{prop:posT}.
\end{remark}
\begin{example}
Consider the matroid $\mD^c$ where $\mD=\{\{1,2\},\{1,5\},\{2,5\}\}\subset \binom{[6]}{2}$. If $C$ is the connected component of $G_\mD$ given by the triangle on vertices $\{1,2,5\}$ then we have that $P_6\backslash C = D_1 \cup D_2$ with $V(D_1)= \{3,4\}$, $V(D_2)=\{6\}$ and $C|_{P_6} = F_1 \cup F_2$ with $V(F_1)=\{1,2\}$ and $V(F_2)=\{5\}$.

Let $\mF$ be the dependent set of a positroid and suppose $\mF'=\mD \cup (C\cdot\{3\}) \subset \mF$ but $\{4,i\}\not\in \mF$ for $i=1,2,5$. Then there are two possibilities to obtain a positroid from $\mF'$ by either connecting the connected component $C' = C\cdot\{3\}$ to the vertex $6$ or adding the vertex $5$ to the vanishing set. 
\begin{center}
\tikzset{every picture/.style={line width=0.75pt}} 

\begin{tikzpicture}[x=0.52pt,y=0.52pt,yscale=-1,xscale=1]

\draw[dashed, thin]   (158,62.5) -- (134.75,102.77) -- (88.25,102.77) -- (65,62.5) -- (88.25,22.23) -- (134.75,22.23) -- cycle ;
\draw[dashed, thin]  (308,62.5) -- (284.75,102.77) -- (238.25,102.77) -- (215,62.5) -- (238.25,22.23) -- (284.75,22.23) -- cycle ;
\draw[dashed, thin]   (458,62.5) -- (434.75,102.77) -- (365,62.5)-- (388.25,22.23) -- (434.75,22.23) -- cycle ;

\draw    (158,62.5) -- (134.75,22.23) ;
\draw    (134.75,22.23) -- (88.25,102.77) ;
\draw    (88.25,102.77) -- (158,62.5) ;
\draw    (284.75,22.23) -- (238.25,102.77) ;
\draw    (284.75,22.23) -- (308,62.5) ;
\draw    (238.25,102.77) -- (308,62.5) ;
\draw    (284.75,102.77) -- (238.25,102.77) ;
\draw    (284.75,102.77) -- (308,62.5) ;
\draw    (284.75,22.23) -- (284.75,102.77) ;
\draw    (434.75,22.23) -- (458,62.5) ;

\draw (162,53.4) node [anchor=north west][inner sep=0.75pt]    {$1$};
\draw (133,4.4) node [anchor=north west][inner sep=0.75pt]    {$2$};
\draw (83,4.4) node [anchor=north west][inner sep=0.75pt]    {$3$};
\draw (51,54.4) node [anchor=north west][inner sep=0.75pt]    {$4$};
\draw (83,105.4) node [anchor=north west][inner sep=0.75pt]    {$5$};
\draw (133,105.4) node [anchor=north west][inner sep=0.75pt]    {$6$};
\draw (312,53.4) node [anchor=north west][inner sep=0.75pt]    {$1$};
\draw (283,4.4) node [anchor=north west][inner sep=0.75pt]    {$2$};
\draw (233,4.4) node [anchor=north west][inner sep=0.75pt]    {$3$};
\draw (201,54.4) node [anchor=north west][inner sep=0.75pt]    {$4$};
\draw (233,105.4) node [anchor=north west][inner sep=0.75pt]    {$5$};
\draw (283,105.4) node [anchor=north west][inner sep=0.75pt]    {$6$};
\draw (462,53.4) node [anchor=north west][inner sep=0.75pt]    {$1$};
\draw (433,4.4) node [anchor=north west][inner sep=0.75pt]    {$2$};
\draw (383,4.4) node [anchor=north west][inner sep=0.75pt]    {$3$};
\draw (351,54.4) node [anchor=north west][inner sep=0.75pt]    {$4$};
\draw (433,105.4) node [anchor=north west][inner sep=0.75pt]    {$6$};
\draw (102,123.4) node [anchor=north west][inner sep=0.75pt]    {$\mathcal{D}$};
\draw (251,123.4) node [anchor=north west][inner sep=0.75pt]    {$\mathcal{D}_{1}$};
\draw (402,123.4) node [anchor=north west][inner sep=0.75pt]    {$\mathcal{D}_{2}$};
\end{tikzpicture} 
\qquad
\begin{tikzpicture}[x=0.52pt,y=0.52pt,yscale=-1,xscale=1]
\draw[dashed, thin]   (158,222.5) -- (134.75,262.77) -- (88.25,262.77) -- (65,222.5) -- (88.25,182.23) -- (134.75,182.23) -- cycle ;
\draw[dashed, thin]   (308,222.5) -- (284.75,262.77) -- (238.25,262.77) -- (215,222.5) -- (238.25,182.23) -- (284.75,182.23) -- cycle ;
\draw[dashed, thin]   (458,222.5) -- (434.75,262.77) -- (365,222.5) -- (388.25,182.23) -- (434.75,182.23) -- cycle ;
\draw    (134.75,182.23) -- (158,222.5) ;
\draw    (134.75,182.23) -- (88.25,262.77) ;
\draw    (88.25,262.77) -- (158,222.5) ;
\draw    (88.25,182.23) -- (88.25,262.77) ;
\draw    (88.25,182.23) -- (134.75,182.23) ;
\draw    (88.25,182.23) -- (158,222.5) ;
\draw    (238.25,182.23) -- (284.75,182.23) ;
\draw    (284.75,182.23) -- (308,222.5) ;
\draw    (238.25,182.23) -- (308,222.5) ;
\draw    (308,222.5) -- (284.75,262.77) ;
\draw    (284.75,262.77) -- (238.25,262.77) ;
\draw    (238.25,182.23) -- (238.25,262.77) ;
\draw    (238.25,182.23) -- (284.75,262.77) ;
\draw    (284.75,182.23) -- (238.25,262.77) ;
\draw    (284.75,182.23) -- (284.75,262.77) ;
\draw    (238.25,262.77) -- (308,222.5) ;
\draw    (388.25,182.23) -- (434.75,182.23) ;
\draw    (434.75,182.23) -- (458,222.5) ;
\draw    (388.25,182.23) -- (458,222.5) ;
\draw (162,213.4) node [anchor=north west][inner sep=0.75pt]    {$1$};
\draw (133,164.4) node [anchor=north west][inner sep=0.75pt]    {$2$};
\draw (83,164.4) node [anchor=north west][inner sep=0.75pt]    {$3$};
\draw (51,214.4) node [anchor=north west][inner sep=0.75pt]    {$4$};
\draw (83,265.4) node [anchor=north west][inner sep=0.75pt]    {$5$};
\draw (133,265.4) node [anchor=north west][inner sep=0.75pt]    {$6$};
\draw (312,213.4) node [anchor=north west][inner sep=0.75pt]    {$1$};
\draw (283,164.4) node [anchor=north west][inner sep=0.75pt]    {$2$};
\draw (233,164.4) node [anchor=north west][inner sep=0.75pt]    {$3$};
\draw (201,214.4) node [anchor=north west][inner sep=0.75pt]    {$4$};
\draw (233,265.4) node [anchor=north west][inner sep=0.75pt]    {$5$};
\draw (283,265.4) node [anchor=north west][inner sep=0.75pt]    {$6$};
\draw (462,213.4) node [anchor=north west][inner sep=0.75pt]    {$1$};
\draw (433,164.4) node [anchor=north west][inner sep=0.75pt]    {$2$};
\draw (383,164.4) node [anchor=north west][inner sep=0.75pt]    {$3$};
\draw (351,214.4) node [anchor=north west][inner sep=0.75pt]    {$4$};
\draw (433,265.4) node [anchor=north west][inner sep=0.75pt]    {$6$};
\draw (102,283.4) node [anchor=north west][inner sep=0.75pt]    {$\mathcal{F} '$};
\draw (251,283.4) node [anchor=north west][inner sep=0.75pt]    {$\mathcal{F}_{1}$};
\draw (402,283.4) node [anchor=north west][inner sep=0.75pt]    {$\mathcal{F}_{2}$};
\end{tikzpicture}
\end{center}
We obtain respectively $\mF_1 = \mF' \cup (C' \cdot \{6\})$ and $\mF_2 = \mF'\cdot\{5\}$. Let $\mD_1, \mD_2$ be the sets obtained by performing the same operations but not connecting $C$ to $\{3\}$, that is $\mD_1 = \mD \cup (C\cdot\{6\})$ and $\mD_2 = \mD \cdot \{5\}$. It is easy to see that $\mD_1 \subsetneq \mF_1$ and $\mD_2 \subsetneq \mF_2$, and the associated graphs are the ones depicted above.\demo
\end{example}
\noindent{\bf Description of Algorithm~\ref{alg:posT}.} We use the set $\mathcal{G}$ to keep track of the intermediate sets that do not define a positroid. We denote $\mathcal{C}$ for the set of connected components of the graph $G_\mD$. We start with $\mD$ and check whether it is nice, i.e.~if the vertices of the connected components of $G_\mD$ lie on cyclic intervals of $[n]\setminus T_\mD$. If it is nice, then the algorithm terminates. Otherwise, for each connected component $C \in \mathcal{C}$, for which the vertices do not define a cyclic interval, it considers the connected components of $P_{n, T_\mD} \setminus C = D_1 \sqcup\cdots\sqcup D_k$ and the cyclic intervals defined by $C$, i.e.~ $C|_{P_{n, T_\mD}} = F_1 \sqcup\cdots\sqcup F_k$. The algorithm then identifies each dependent set formed by completing the graph on $V(C) \cup D_i$ (denoted $\mD \cup (C \cdot D_i)$) or connecting the vertices of $F_i$ to every vertex in $[n] \setminus T_\mD$ (denoted $\mD + F_i$), and then repeats the process for each set. 

\medskip

We now prove that the output of Algorithm~\ref{alg:posT}, i.e.~the set $\Pos(\mD)$
contains all the maximal positroids contained in a given matroid $\mD^c$. First we recall from Definition~\ref{def:max} that 
\[\MPos(\mD)=\{\mB |\ \mB^c \textrm{ is a positroid and there exist no } 
\mD'  \textrm{ s.t. }\mB \supsetneq \mD' \supseteq \mD \textrm{ and } (\mD')^c \textrm{ is a positroid}\}.\]

\begin{proposition} \label{prop:posT}
Let $\mD \subset \binom{[n]}{2}$ be such that $\mD^c$ is a matroid. Then the set $\Pos(\mD)$ obtained by applying 
Algorithm~\ref{alg:posT} to $\mD$ 
contains $\MPos(\mD)$. 
\end{proposition}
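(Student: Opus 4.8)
The plan is to show that every positroid $\mathcal{B}^c$ with $\mathcal{B} \in \MPos(\mD)$ is produced by the algorithm; equivalently, that the sequence of ``minimal'' enlargements tracked by the algorithm eventually reaches every maximal positroid between $\mD$ and $\binom{[n]}{2}$. First I would set up the induction: the algorithm maintains a worklist $\mathcal{G}$ of matroids (sets of the form $\overline{\mD''}$) that are not yet nice, and at each step expands a non-nice $\mD'' \in \mathcal{G}$ along a connected component $C$ with $P_{n,T_{\mD''}}\setminus C$ disconnected, producing the children $\mD''\cup(C\cdot D_i)$ and $\mD''+F_i$ for all $i$. The key claim is that for any matroid $\mD''$ with $\mD \subseteq \mD'' \subsetneq \mathcal{B}$ and $(\mD'')^c$ not a positroid, at least one child $\mD'''$ satisfies $\mD'' \subsetneq \mD''' \subseteq \mathcal{B}$. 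Granting this claim, starting from $\mD$ and following the chain of children lying below $\mathcal{B}$, strict inclusion forces termination at a matroid $\mD^*$ with $\mD^* \subseteq \mathcal{B}$ and $(\mD^*)^c$ a positroid; maximality of $\mathcal{B}$ in $\MPos(\mD)$ then forces $\mD^* = \mathcal{B}$, and $\mathcal{B}$ appears in the output.

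To prove the key claim I would invoke Lemma~\ref{lem:twooperations}: since $(\mD'')^c$ is not a positroid there is a connected component $C$ of $G_{\mD''}$ with $P_{n,T_{\mD''}}\setminus C$ disconnected, and there is an edge $\{i,j\}\in \mathcal{B}\setminus \mD''$ with $i\in V(C)$. By Remark~\ref{rmk:twooperations}, either $i\in T_{\mathcal{B}}$ or $\mD''\cup(C\cdot\{j\})\subseteq \mathcal{B}$. In the first case, $i$ belongs to some $F_\ell$ of $C|_{P_{n,T_{\mD''}}}$, and since all of $\mathcal{B}$'s relevant components are complete and $i$ is a loop in $\mathcal{B}^c$, every vertex of $F_\ell$ is a loop in $\mathcal{B}^c$ as well (use Lemma~\ref{lemma:inclusion}: the component of $G_{\mD''}$ containing $F_\ell$ sits inside a component of $G_{\mathcal{B}}$, but $i$ is not in that component's vertex set, forcing all of $F_\ell\subseteq T_{\mathcal{B}}$); hence the child $\mD''+F_\ell$ satisfies $\mD'' \subsetneq \mD''+F_\ell\subseteq \mathcal{B}$. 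In the second case, $j$ lies in some connected component $D_m$ of $P_{n,T_{\mD''}}\setminus C$, and I would argue $\mD''\cup(C\cdot D_m)\subseteq \mathcal{B}$: since $C$ and $j$ are connected in $G_{\mathcal{B}}$ (because $\{i,j\}\in\mathcal{B}$, $i\in V(C)$, and the $\mathcal{B}$-component is complete), and since $D_m$ is a cyclic interval of $[n]\setminus(T_{\mD''}\cup V(C))$ squeezed between two vertices of $C$, the niceness of $\mathcal{B}$ (Proposition~\ref{prop:niceness}) forces the whole interval $D_m$ into the same $\mathcal{B}$-component as $C$; completing that component yields exactly $\mD''\cup(C\cdot D_m)\subseteq\mathcal{B}$. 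Either way a child below $\mathcal{B}$ exists, and strict inclusion holds since the added edge $\{i,j\}$ or the newly-loop vertices are not in $\mD''$.

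The main obstacle I expect is the second case: pinning down that the entire cyclic interval $D_m$ (not merely the single vertex $j$) must be absorbed into $C$'s component in $\mathcal{B}$. This is where the geometry of the polygon $P_{n,T_{\mD''}}$ and the second niceness condition are essential — one must show that if any vertex of $D_m$ stayed outside $C$'s $\mathcal{B}$-component while $j$ and the flanking vertices of $C$ are inside it, then that $\mathcal{B}$-component would disconnect $P_{n,T_{\mathcal{B}}}$, contradicting Proposition~\ref{prop:niceness}. Care is also needed with the bookkeeping of $T$: after an operation the polygon shrinks, so one must re-express components of $G_{\mD''}$ relative to $P_{n,T_{\mathcal{B}}}$ using Lemma~\ref{lemma:inclusion} before comparing. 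A final routine check is that the algorithm indeed terminates, which follows because each child strictly enlarges the dependent set inside the finite poset $\binom{[n]}{2}$.
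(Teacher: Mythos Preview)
Your overall strategy --- an induction along the algorithm's worklist, with the key claim that some child of each intermediate $\mD''$ lies below the target $\mathcal{B}$, and the case split coming from Lemma~\ref{lem:twooperations} and Remark~\ref{rmk:twooperations} --- matches the paper's proof exactly. The gap is in how you justify the two cases: you try to argue them using only Lemma~\ref{lemma:inclusion} and niceness of $\mathcal{B}$, deferring maximality to the very end, and this does not work.

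Concretely, take $n=8$, $\mD''$ the matroid with $G_{\mD''}$ the complete graph on $\{1,2,5,6\}$ and singletons elsewhere, and let $\mathcal{B}$ be the positroid with $T_{\mathcal{B}}=\{1\}$ and $G_{\mathcal{B}}$ the complete graph on $\{2,3,4,5,6\}$ with $7,8$ singletons. Then $\mD''\subset\mathcal{B}$ and $\mathcal{B}$ is nice. Choosing $\{i,j\}=\{1,3\}\in\mathcal{B}\setminus\mD''$ puts you in your ``first case'' ($i=1\in T_{\mathcal{B}}$) with $F_\ell=\{1,2\}$; your Lemma~\ref{lemma:inclusion} argument would force $\{1,2\}\subseteq T_{\mathcal{B}}$, but $2\notin T_{\mathcal{B}}$. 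Lemma~\ref{lemma:inclusion} only says $V(C)\setminus T_{\mathcal{B}}$ lands in one component of $G_{\mathcal{B}}$; it says nothing about \emph{which} vertices of $F_\ell$ are loops. Similarly, in your ``second case'': with $n=10$, $C=\{1,6\}$, $j=2\in D_1=\{2,3,4,5\}$, and $\mathcal{B}$ the positroid with $T_{\mathcal{B}}=\emptyset$ and unique nontrivial component $\{6,7,8,9,10,1,2\}$, one has $j$ and $V(C)$ in the same $\mathcal{B}$-component, yet $3\in D_1$ is not --- so niceness of $\mathcal{B}$ alone does not absorb all of $D_m$.

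In both examples $\mathcal{B}$ is \emph{not} maximal, and a correct child does exist --- just not the one your argument points to. The paper's proof handles exactly this by invoking maximality \emph{inside} each case: assuming $V(F)\not\subseteq T_{\mD'}$ (respectively $\mD\cup(C\cdot D)\not\subseteq\mD'$), it explicitly builds a strictly smaller nice set $\overline{\mF}\subsetneq\mD'$ containing $\mD$, contradicting $\mD'\in\MPos(\mD)$. So the fix is not to change your framework but to bring maximality into the case analysis rather than postponing it; without that, the step ``forcing all of $F_\ell\subseteq T_{\mathcal{B}}$'' (and the analogous step for $D_m$) is unjustified.
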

\begin{proof}
First note that if $\mD$ is nice, then $\mD^c$ is a positroid by Proposition~\ref{prop:niceness}, and so it is the unique maximal positroid that we are looking for. In this case, $\Pos(\mD) = \{\mD^c\}$ which is equal to $\MPos(\mD)$.

Suppose now that $\mD$ is not nice. Consider an element $\mD'\supset\mD$ in $\MPos(\mD)$.
We want to show~that $\mD' \in \Pos(\mD)$. Since $\mD^c$ is a matroid, every connected component of $G_\mD$ is complete. By Lemma~\ref{lem:twooperations}, there exists a connected component $C$ of $G_\mD$ such that $P_{n,T_\mD}\backslash C$ is disconnected, and there exists $\{i,j\} \in \mD' \backslash \mD$ for some $i \in V(C)$. Then, by Remark~\ref{rmk:twooperations} there are two possible cases that we consider separately.

\medskip\noindent{\bf Case 1.} $\mD \cup (C \cdot\{j\}) \subset \mD'$. Let $D$ be the connected component of $P_{n,T_\mD}\backslash C$ containing $j$. We need to prove that $\mD \cup (C\cdot D)$ is contained in $\mD'$. Suppose by contradiction that there exists $k \in V(D)$ such that $\{k,\ell\} \not\in \mD'$ for some $\ell \in V(C)$. Define the sets
\[ D(G_{\mD'})=\{j \in V(D) \mid \{i,j\}\in \mD' \text{ for all } i \in V(C)\}\text{ and } \mF=\mD' \backslash \{\{i,\ell\}\mid i \in V(C), \ell \in D(G_{\mD'})\}. \]
Note that $\overline{\mF} \subseteq \mD'$. We show that $\overline{\mF}$ is nice. Suppose by contradiction that $\overline{\mF}$ is not nice. Then the connected component $C'$ in $G_{\overline{\mF}}$ containing $V(C)\backslash T_{\mD'}$ is such that $P_{n,T_{\mF}}\backslash C' = D_1 \sqcup D_2$ where $D_1$ is the connected component containing $D(G_{\mD'})$. Since the changes in the construction of $\mF$ only involve the vertices in $D_1$, it follows that $\{i,\ell\} \in \mD'$ for all $i \in V(C')$ and $\ell \in D_1$. Hence $\{i,\ell\} \in \mD'$ for all $i \in V(C)$ and $\ell \in V(D)$, which is a contradiction. Therefore $\overline{\mF}$ is nice.

\medskip\noindent{\bf Case 2.} $i \in T_{\mD'}$. Let $F$ be the connected component of $C|_{P_{n,T_\mD}}$ containing $i$. We prove that $\mD + F \subset \mD'$. Suppose by contradiction that $V(F)\nsubseteq T_{\mD'}$ 
and let $k \in V(F) \backslash T_{\mD'}$. Define 
\[
\mF=\mD' \backslash \{\{i,\ell\}\mid i \in V(F)\cap T_{\mD'}, \ell \in [n]\backslash\{i\}\} \cup \{\{i,k\} \mid i \in V(F)\cap T_{\mD'}, k \in V(C)\backslash\{i\}\} \;.\] 
Note that $\mF$ is defined by removing all the elements of $V(F) \cap T_{\mD'}$ from the vanishing set.
Now let $\overline{\mF}$ be the set obtained from $\mF$ by completing the connected components of $G_\mF$. If we prove that $\overline{\mF}$ is nice, since $\overline{\mF}\subseteq \mD'$, we would get a contradiction with the assumption that $\mD'$ is one of the smallest set (containing $\mD$) with the required property. Let $C'$ be the connected component in $G_{\overline{\mF}}$ containing $k$. If $P_{n,T_{\mD'}}\backslash C'$ is not connected, since the only changes made in the process of constructing $\mF$ involve the vertices of $F$, there is a connected component of $P_{n,T_{\mD'}}\backslash C'$ contained in $V(F)\backslash V(C')$. By definition of $\mF$, the set $V(F) \backslash V(C')$ is empty, hence $\overline{\mF}$ is nice.
\end{proof}
\begin{algorithm}[H]
\caption{Finding all the maximal positroids contained in a given matroid.
}
\label{alg:posT}
\KwIn{A subset $\mD \subset \binom{[n]}{2}$ such that $\mD^c$ is a matroid.
\\
}
\BlankLine
\KwOut{A set containing all the maximal positroids contained in the matroid $\mD^c$.}
\BlankLine
{\bf Initialization:}
$\mathcal{G} \leftarrow \{ \mD \}$\;
$\mathcal{C} \leftarrow \emptyset$\;
$\Pos(\mD) \leftarrow \emptyset$\;
\BlankLine
\While{ $\mathcal{G} \neq \emptyset$}
{
\ForEach{$\mathcal{F} \in \mathcal{G}$}
{
\BlankLine
$\mathcal{C} \leftarrow \{ C_1,\dots,C_m \mid G_\mathcal{F} = C_1\cup \dots \cup C_m\}$\;
\eIf{ for all $i \in [m]$, $P_{n,T_\mathcal{F}}\backslash C_i$ is connected}
{
\BlankLine
$\Pos(\mD) \leftarrow \Pos(\mD) \cup \{\mathcal{F}^c\}$\;
$\mathcal{G} \leftarrow \mathcal{G} \backslash \{\mathcal{F} \}$\;
}
{
$P_{n,T_\mathcal{F}}\backslash C_i = D^i_1\cup \dots\cup D^i_{k_i}$, for all $i \in [m]$\;
$C_i|_{P_{n,T_\mathcal{F}}} = F^i_1 \cup \dots \cup F^i_{k_i}$, for all $i \in [m]$\;
\ForEach{ $j \in [m]$ such that $k_j >1$}
{
$\mathcal{G} \leftarrow \mathcal{G} \cup \{ \mathcal{F} \cup (C_j \cdot D^j_1),\, \dots,\, \mathcal{F} \cup (C_j \cdot D^j_{k_j}),\, \mathcal{F} + F^j_1,\, \dots,\, \mathcal{F} +F^j_{k_j}\}$\;
}
$\mathcal{G} \leftarrow \mathcal{G} \backslash \{\mathcal{F}\}$\;
}
}
}
\Return{$\Pos(\mD)$}\;
\end{algorithm}

\medskip

\begin{example}\label{ex:algorithm}
Consider $\mathcal{D}=\{\{1,2\},\{2,4\},\{1,4\},\{3,5\},\{3,6\},\{5,6\}\} \cup \{\{7,i\} \mid i \in [8]\backslash\{7\}\}$ from Examples~\ref{ex:crossing} and \ref{exam:3positroids}.
Note that $\mD$ is a matroid, since $G_\mD$ has complete connected components, but it is not a positroid. We want to construct the set $\Pos(\mD)$ applying Algorithm~\ref{alg:posT}. Let $\mathcal{G}=\{\mD\}$. Consider the graph $G_\mD$ and let $C_1$ be the connected component with vertices $\{1,2,4\}$ and let $C_2$ be the connected component with vertices $\{3,5,6\}$. Let us start by considering the connected component $C_1$. We have
\[ P_{8,\{7\}} \backslash C_1 = D^1_1 \cup D^1_2, \quad C_1|_{P_{8,\{7\}}} = F^1_1 \cup F^1_2 \]
with $V(D^1_1) = \{3\}$, $V(D^1_2)=\{5,6,8\}$ and $V(F^1_1)=\{1,2\}$, $V(F^1_2)=\{4\}$. Applying the algorithm to this connected component we obtain the sets with associated graphs:

\begin{center}
\tikzset{every picture/.style={line width=0.75pt}} 
\begin{tikzpicture}[x=0.75pt,y=0.75pt,yscale=-1,xscale=1]

\draw  [dashed, thin] (115.88,177.54) -- (92.89,200.12) -- (38.08,176.86) -- (38.36,144.63) -- (61.34,122.04) -- (93.57,122.33) -- (116.16,145.31) -- cycle ;
\draw  [dashed, thin] (237.88,178.54) -- (214.89,201.12) -- (160.08,177.86) -- (160.36,145.63) -- (183.34,123.04) -- (215.57,123.33) -- (238.16,146.31) -- cycle ;
\draw  [dashed, thin] (334.89,201.12) -- (280.08,177.86) -- (280.36,145.63) -- (303.34,123.04) -- (335.57,123.33) -- cycle ;
\draw  [dashed, thin] (456.88,178.54) -- (433.89,201.12) -- (379.08,177.86) -- (379.36,145.63) -- (434.57,123.33) -- (457.16,146.31) -- cycle ;
\draw    (38.08,176.86) -- (93.57,122.33) ;
\draw    (93.57,122.33) -- (38.36,144.63) ;
\draw    (38.36,144.63) -- (38.08,176.86) ;
\draw    (38.08,176.86) -- (115.88,177.54) ;
\draw    (115.88,177.54) -- (116.16,145.31) ;
\draw    (93.57,122.33) -- (116.16,145.31) ;
\draw    (61.34,122.04) -- (93.57,122.33) ;
\draw    (38.36,144.63) -- (61.34,122.04) ;
\draw    (61.34,122.04) -- (115.88,177.54) ;
\draw    (61.34,122.04) -- (116.16,145.31) ;
\draw    (61.34,122.04) -- (38.08,176.86) ;
\draw    (38.08,176.86) -- (116.16,145.31) ;
\draw    (38.36,144.63) -- (115.88,177.54) ;
\draw    (93.57,122.33) -- (115.88,177.54) ;
\draw    (38.36,144.63) -- (116.16,145.31) ;
\draw    (160.08,177.86) -- (215.57,123.33) ;
\draw    (215.57,123.33) -- (160.36,145.63) ;
\draw    (160.36,145.63) -- (160.08,177.86) ;
\draw    (160.08,177.86) -- (237.88,178.54) ;
\draw    (237.88,178.54) -- (238.16,146.31) ;
\draw    (215.57,123.33) -- (238.16,146.31) ;
\draw    (183.34,123.04) -- (215.57,123.33) ;
\draw    (160.36,145.63) -- (183.34,123.04) ;
\draw    (183.34,123.04) -- (237.88,178.54) ;
\draw    (183.34,123.04) -- (238.16,146.31) ;
\draw    (183.34,123.04) -- (160.08,177.86) ;
\draw    (160.08,177.86) -- (238.16,146.31) ;
\draw    (160.36,145.63) -- (237.88,178.54) ;
\draw    (215.57,123.33) -- (237.88,178.54) ;
\draw    (160.36,145.63) -- (238.16,146.31) ;
\draw    (280.36,145.63) -- (280.08,177.86) ;
\draw    (280.36,145.63) -- (335.57,123.33) ;
\draw    (280.08,177.86) -- (335.57,123.33) ;
\draw    (379.36,145.63) -- (379.08,177.86) ;
\draw    (379.36,145.63) -- (434.57,123.33) ;
\draw    (379.08,177.86) -- (434.57,123.33) ;
\draw    (457.16,146.31) -- (456.88,178.54) ;
\draw    (214.89,201.12) -- (237.88,178.54) ;
\draw    (160.08,177.86) -- (214.89,201.12) ;
\draw    (160.36,145.63) -- (214.89,201.12) ;
\draw    (183.34,123.04) -- (214.89,201.12) ;
\draw    (215.57,123.33) -- (214.89,201.12) ;
\draw    (214.89,201.12) -- (238.16,146.31) ;

\draw (41,211.4) node [anchor=north west][inner sep=0.75pt]    {$G_{\mathcal{D} \cup \left( C_{1} \cdot D_{1}^{1}\right)}$};
\draw (162,212.4) node [anchor=north west][inner sep=0.75pt]    {$G_{\mathcal{D} \cup \left( C_{1} \cdot D_{2}^{1}\right)}$};
\draw (281,212.4) node [anchor=north west][inner sep=0.75pt]    {$G_{\mathcal{D} +F_{1}^{1}}$};
\draw (380,212.4) node [anchor=north west][inner sep=0.75pt]    {$G_{\mathcal{D} +F_{2}^{1}}$};
\draw (118.02,164.82) node [anchor=north west][inner sep=0.75pt]    {$1$};
\draw (240.02,165.82) node [anchor=north west][inner sep=0.75pt]    {$1$};
\draw (459.02,165.82) node [anchor=north west][inner sep=0.75pt]    {$1$};
\draw (117,138.4) node [anchor=north west][inner sep=0.75pt]    {$2$};
\draw (239,138.4) node [anchor=north west][inner sep=0.75pt]    {$2$};
\draw (459,138.4) node [anchor=north west][inner sep=0.75pt]    {$2$};
\draw (91,107) node [anchor=north west][inner sep=0.75pt]    {$3$};
\draw (211,107) node [anchor=north west][inner sep=0.75pt]    {$3$};
\draw (331,107) node [anchor=north west][inner sep=0.75pt]    {$3$};
\draw (431,107) node [anchor=north west][inner sep=0.75pt]    {$3$};
\draw (57,107) node [anchor=north west][inner sep=0.75pt]    {$4$};
\draw (177,107) node [anchor=north west][inner sep=0.75pt]    {$4$};
\draw (298,107) node [anchor=north west][inner sep=0.75pt]    {$4$};
\draw (25,137.62) node [anchor=north west][inner sep=0.75pt]    {$5$};
\draw (147,138.62) node [anchor=north west][inner sep=0.75pt]    {$5$};
\draw (267,138.62) node [anchor=north west][inner sep=0.75pt]    {$5$};
\draw (366,138.62) node [anchor=north west][inner sep=0.75pt]    {$5$};
\draw (25,163.62) node [anchor=north west][inner sep=0.75pt]    {$6$};
\draw (147,163.62) node [anchor=north west][inner sep=0.75pt]    {$6$};
\draw (267,163.62) node [anchor=north west][inner sep=0.75pt]    {$6$};
\draw (366,162.62) node [anchor=north west][inner sep=0.75pt]    {$6$};
\draw (94.89,203.52) node [anchor=north west][inner sep=0.75pt]    {$8$};
\draw (216.89,204.52) node [anchor=north west][inner sep=0.75pt]    {$8$};
\draw (336.89,204.52) node [anchor=north west][inner sep=0.75pt]    {$8$};
\draw (435.89,204.52) node [anchor=north west][inner sep=0.75pt]    {$8$};
\end{tikzpicture}
\end{center}
After this step, we have $\mathcal{G} = \{\mD, \mD \cup (C_1 \cdot D^1_1), \mD \cup (C_1 \cdot D^1_2), \mD +F^1_1, \mD + F^1_2\}$. We now do the same for the connected component $C_2$ and obtain:
\[ P_{8,\{7\}} \backslash C_2 = D^2_1 \cup D^2_2, \quad C_2|_{P_{8,\{7\}}} = F^2_1\cup F^2_2 \]
with $V(D^2_1)=\{4\}$, $V(D^2_2) = \{1,2,8\}$ and $V(F^2_1)=\{3\}$, $V(F^2_2)=\{5,6\}$.

\medskip
Repeating the same procedure, after the first cycle of the algorithm we get:
\[ \mathcal{G} = \left\{\mD, \mD \cup (C_1 \cdot D_1^1), \binom{[n]}{2}, \mD + F^1_1, \mD + F^1_2, \mD + F^2_1, \mD +F^2_2 \right\}. \]
We remove $\mD$ from the set $\mathcal{G}$. We then proceed to repeat the same procedure for every element in $\mathcal{G}$. We now determine the nice sets in $\mathcal{G}$ and add them to the positroid set $\Pos(\mD)$:
\[ \Pos(\mD) = \left\{ (\mD \cup (C_1 \cdot D_1^1))^c, \emptyset, (\mD + F^1_2)^c, (\mD + F^2_1)^c \right\}.\]
The only sets in $\mathcal{G}$ that are not nice are $\mathcal{F} = \mD+F^1_1$, $\mathcal{F}' = \mD + F^2_2$. Denoting with a tilde the connected components in $G_\mF$ and $G_\mF'$ corresponding to the ones in $G_\mD$ and repeating the same procedure for these two sets we obtain that the algorithm returns the set:
\begin{align*}
    \Pos(\mD) = \{& (\mD \cup (C_1 \cdot D_1^1))^c, \emptyset, (\mD + F^1_2)^c, (\mD + F^2_1)^c, (\mathcal{F} \cup (\tilde{C}_2 \cdot \tilde{D}^2_1))^c , (\mathcal{F} \cup (\tilde{C}_2  \cdot  \tilde{D}_2^2))^c, \\
    & (\mathcal{F}+ \tilde{F}^2_1)^c, (\mathcal{F}+\tilde{F}^2_2)^c, (\mathcal{F}' \cup (\tilde{C}_1 \cdot \tilde{D}^1_1))^c , (\mathcal{F}' \cup (\tilde{C}_1  \cdot  \tilde{D}_1^2))^c, (\mathcal{F}'+ \tilde{F}^1_2)^c \}. 
\end{align*}

Note that there might be non-maximal positroids in $\mD^c$. For instance, the empty set is clearly contained in every other set in $\Pos(\mD)$. Moreover, $\mD\cup(C_2 \cdot D^2_1) \subsetneq \mathcal{F}\cup (\tilde{C}_2 \cdot \tilde{D}^2_1)$ and $
\mD\cup(C_2 \cdot D^2_1) \subsetneq \mF'\cup(\tilde{C}_1 \cdot \tilde{D}_1^1)$. Hence, we remove the non-maximal positroids from $\Pos(\mD)$ to obtain:
\begin{align*}
   \MPos(\mD)= \{& (\mD \cup (C_1 \cdot D_1^1))^c, (\mD + F^1_2)^c, (\mD + F^2_1)^c, (\mathcal{F} \cup (\tilde{C}_2  \cdot  \tilde{D}_2^2))^c, \\
    &(\mathcal{F}+ \tilde{F}^2_1)^c, (\mathcal{F}+\tilde{F}^2_2)^c, (\mathcal{F}' \cup (\tilde{C}_1  \cdot  \tilde{D}_2^1))^c, (\mathcal{F}'+ \tilde{F}^1_2)^c \}. \tag*{\demo}
\end{align*}
\end{example}

As an immediate corollary of Proposition~\ref{prop:matT}, 
we obtain all the {\em maximal positroids} contained in any~set~$\mD$, specifically, for $\mD$ that do not already define matroids.

\begin{corollary}\label{cor:allpos}
Let $\mD \subset \binom{[n]}{2}$ when $\mD^c$ is not necessarily a matroid and $\mathbb{T}_\mD=\{T \mid c(\mD+T) > c(\mD + (T\backslash \{i\}) \text{ for all } i \in T\}$. Then 
$\MPos(\mD) \subseteq \bigcup_{T\in \mathbb{T}_\mD} \Pos\left(\overline{\mD+T}\right).$
\end{corollary}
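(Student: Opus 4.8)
The plan is to reduce Corollary~\ref{cor:allpos} to the already-established results for matroids (Proposition~\ref{prop:matT}) and for positroids inside a matroid (Proposition~\ref{prop:posT}), by observing that every positroid contained in $\mD^c$ is in particular a matroid contained in $\mD^c$, hence (its dependent set) contains $\mD$ and defines a matroid, so it must lie inside one of the maximal matroids $\left(\overline{\mD+T}\right)^c$ with $T\in\mathbb{T}_\mD$. Concretely, let $\mB\in\MPos(\mD)$, so $\mB^c$ is a positroid with $\mB\supseteq\mD$ and $\mB$ minimal with this property. Since $\mB^c$ is also a matroid containing $\mD$, by Proposition~\ref{prop:matT} there exists $T\in\mathbb{T}_\mD$ with $\mB\subseteq\overline{\mD+T}$. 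I would then want to argue that $\mB\in\MPos\left(\overline{\mD+T}\right)$, and conclude via Proposition~\ref{prop:posT} that $\mB\in\Pos\left(\overline{\mD+T}\right)$, giving the claimed containment $\MPos(\mD)\subseteq\bigcup_{T\in\mathbb{T}_\mD}\Pos\left(\overline{\mD+T}\right)$.

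The first step is to make the matroid-containment argument precise. Given $\mB\in\MPos(\mD)$, the set $\mB$ is the dependent set of a matroid containing $\mD$; by Proposition~\ref{prop:matT} the maximal such matroids are exactly the $\left(\overline{\mD+T}\right)^c$ for $T\in\mathbb{T}_\mD$, and every matroid dependent set containing $\mD$ is contained in a maximal one, so fix $T\in\mathbb{T}_\mD$ with $\mathcal{D}\subseteq\overline{\mD+T}\subseteq\mB$ — wait, the inclusion goes the other way: we need $\overline{\mD+T}$ to be a \emph{maximal matroid} (i.e.\ its complement is), and $\mB$, being a matroid dependent set containing $\mD$, satisfies $\mD\subseteq\mB$ and $\mB$ is contained in (the dependent set of) some maximal matroid, i.e.\ $\mB\subseteq\overline{\mD+T}$ for some $T\in\mathbb{T}_\mD$. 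So we have $\overline{\mD+T}\subseteq\mB$ is wrong; rather $\mD\subseteq\overline{\mD+T}$ need not hold either. Let me restate carefully: $\mB$ is a matroid dependent set with $\mD\subseteq\mB$, hence there is a maximal matroid dependent set $\mathcal{F}\supseteq\mB\supseteq\mD$, and by Proposition~\ref{prop:matT}, $\mathcal{F}=\overline{\mD+T}$ for some $T\in\mathbb{T}_\mD$. Thus $\mD\subseteq\overline{\mD+T}\subseteq\mB$ is false; we only get $\mD\subseteq\mB\subseteq\overline{\mD+T}$. Good — that is the correct chain.

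The second step is to verify $\mB\in\MPos\left(\overline{\mD+T}\right)$, i.e.\ that $\mB$ is minimal among positroid dependent sets containing $\overline{\mD+T}$. But here lies \textbf{the main obstacle}: $\mB$ need not even contain $\overline{\mD+T}$ — we have $\mB\subseteq\overline{\mD+T}$, the reverse inclusion. So the naive reduction fails, and I must instead reverse the roles. The right statement to prove is: if $\mB^c$ is a positroid and $\mD\subseteq\mB$, then $\mB\in\Pos\left(\overline{\mD+T}\right)$ for the appropriate $T$. The fix is to run Algorithm~\ref{alg:posT} with input $\overline{\mD+T}$ (which is a matroid dependent set by construction), and to note that $\mD\subseteq\mB$ together with $\mB\subseteq\overline{\mD+T}$ forces $\mB=\overline{\mD+T}$ only if $\mB$ were maximal — which it is not in general. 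I think the intended argument is different: one should check that for $\mB\in\MPos(\mD)$, the set $T_\mB$ (minus $T_\mD$) together with the combinatorics of $G_\mB$ shows $\mB\supseteq\overline{\mD+T}$ for $T=T_\mB\setminus T_\mD\in\mathbb{T}_\mD$. Indeed, since $\mB$ is a \emph{maximal}-direction object (minimal positroid above $\mD$), adding loops only when forced, one expects $T_\mB\setminus T_\mD\in\mathbb{T}_\mD$ because each loop added corresponds to a cut-vertex; and then $\overline{\mD+T_\mB\setminus T_\mD}\subseteq\mB$ since $\mB$ already contains $\mD$, contains those loops, and (being a matroid) has complete components. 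With $\mathcal{F}:=\overline{\mD+T}\subseteq\mB$ and $\mathcal{F}^c$ a matroid, apply Proposition~\ref{prop:posT} to $\mathcal{F}$: its output $\Pos(\mathcal{F})$ contains $\MPos(\mathcal{F})$, and $\mB\in\MPos(\mathcal{F})$ precisely because $\mB$ is a minimal positroid dependent set containing $\mathcal{F}$ (minimality over $\mathcal{F}$ follows from minimality over the smaller $\mD$). Hence $\mB\in\Pos\left(\overline{\mD+T}\right)\subseteq\bigcup_{T\in\mathbb{T}_\mD}\Pos\left(\overline{\mD+T}\right)$, as desired. The delicate point — and where I would spend the most care — is justifying $T_\mB\setminus T_\mD\in\mathbb{T}_\mD$, i.e.\ that every loop appearing in $\mB$ but not forced in $\mD$ is a cut-vertex of the intermediate graph; this uses that $\mB$ is minimal, together with Lemma~\ref{lemma:completecomp} and the structure of $\mathbb{T}_\mD$ in Notation~\ref{notation:matroidprop}.
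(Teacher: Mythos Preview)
Your confusion is about the direction of inclusion, and your first instinct was in fact correct. Recall that $\Mat(\mD)$ consists of the \emph{maximal matroids} contained in $\mD^c$; maximal with respect to inclusion of \emph{bases} means \emph{minimal} with respect to inclusion of dependent sets. Thus each $\overline{\mD+T}$ for $T\in\mathbb{T}_\mD$ is a \emph{minimal} matroid dependent set containing $\mD$. Now if $\mB\in\MPos(\mD)$ (so $\mB\supseteq\mD$ is the dependent set of a positroid, minimal with that property), then $\mB^c$ is a matroid contained in $\mD^c$, hence contained in some maximal matroid $(\overline{\mD+T})^c$; taking complements gives $\overline{\mD+T}\subseteq\mB$. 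So the chain $\mD\subseteq\overline{\mD+T}\subseteq\mB$ that you wrote and then rejected is the right one, and the chain $\mD\subseteq\mB\subseteq\overline{\mD+T}$ you settled on is wrong.

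With the correct chain the argument is the one-line deduction the paper intends. Since $\mD\subseteq\overline{\mD+T}\subseteq\mB$ and $\mB$ is a minimal positroid dependent set containing $\mD$, any positroid dependent set $\mB'$ with $\overline{\mD+T}\subseteq\mB'\subsetneq\mB$ would also contain $\mD$, contradicting minimality of $\mB$. Hence $\mB\in\MPos(\overline{\mD+T})$, and Proposition~\ref{prop:posT} gives $\mB\in\Pos(\overline{\mD+T})$. This is exactly what the paper means by ``an immediate corollary of Proposition~\ref{prop:matT}''.

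Your alternative route---taking $T=T_\mB\setminus T_\mD$ and arguing that this lies in $\mathbb{T}_\mD$---is unnecessary, and you correctly flag the step $T_\mB\setminus T_\mD\in\mathbb{T}_\mD$ as unjustified. It is not obvious: you would need an argument parallel to the second half of the proof of Proposition~\ref{prop:matT} (showing that each loop added in $\mB$ is forced to be a cut-vertex by minimality), and even then one must be careful because $\mB$ is a minimal \emph{positroid} dependent set, not a minimal \emph{matroid} dependent set. There is no need to go down this path once the inclusion is oriented correctly.
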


\begin{remark}
In Algorithm~\ref{alg:posT}, we have significantly restricted the number of operations, identified in Remark~\ref{rmk:twooperations}, leading to the calculation of all the maximal positroids contained in $\mD^c$. More precisely, applying the graph operations in Remark~\ref{rmk:twooperations} lead to many more positroids comparing to the output of Algorithm~\ref{alg:posT}, however, Proposition~\ref{prop:posT} shows that the output contains all the maximal positroids.
Moreover, we can easily detect the non-maximal positroids in $\Pos(\mD)$ by either checking the dependent sets by inclusion or applying the characterization given in Lemma~\ref{lemma:inclusion}.
\end{remark}

\section{Properties of positroid cells}
We now apply the results of \S\ref{sec:results} to compute the dimension, boundaries and intersections of~positroid~cells.
\vspace{-5mm}
\subsection{Dimensions of positroid cells}\label{sec:dim}
\begin{proposition} \label{cor:dimension}
The dimension of the positroid cell $S_+(\mD^c)$ is $n-|T_\mD|+c(\mD)-4$.
\end{proposition}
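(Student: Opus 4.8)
The plan is to compute the dimension through the bijection between positroids in $\Gr_{\geq 0}(2,n)$ and Le diagrams fitting inside a $2\times(n-2)$ rectangle, using the fact recalled in \S\ref{sec:Le} that the dimension of a positroid cell equals the number of $+$ symbols in the corresponding Le diagram. Essentially all the geometric work has already been done inside the proof of Proposition~\ref{prop:niceness}, where the Le diagram of a nice $\mD$ is written down explicitly; what remains is a bookkeeping count.

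First I would reduce to the case $T_\mD=\emptyset$. By Remark~\ref{rem:zero_columns}, deleting the forced zero columns indexed by $T_\mD$ produces a positroid $\mD'$ on the ground set $[n]\setminus T_\mD$ with $G_{\mD'}=G_\mD$, so $c(\mD')=c(\mD)$, and reinserting the zero columns is compatible with the $GL_+(2)$-action; hence $S_+(\mD^c)\cong S_+((\mD')^c)$ and $\dim S_+(\mD^c)=\dim S_+((\mD')^c)$. Since $\bigl|[n]\setminus T_\mD\bigr|=n-|T_\mD|$, it suffices to show that $\dim S_+(\mD^c)=n+c(\mD)-4$ whenever $T_\mD=\emptyset$.

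So assume $T_\mD=\emptyset$ and put $c=c(\mD)$. As $\mD^c$ is a positroid, $\mD$ is nice (Proposition~\ref{prop:niceness}), so the components $C_1,\dots,C_c$ of $G_\mD$ are complete and their vertex sets are cyclic intervals partitioning $[n]$; note $c\ge 2$, since otherwise $\mD^c$ would contain no $2$-element basis. Using the cyclic symmetry of Remark~\ref{rmk:cyclicsym}, which alters neither $c$ nor the dimension, I relabel so that $V(C_1)=\{1,\dots,m\}$ with $1\le m<n$ and $C_2,\dots,C_c$ tile $\{m+1,\dots,n\}$. The Le diagram constructed in the proof of Proposition~\ref{prop:niceness} then has row labels $I_1=\{1,m+1\}$, and its $+$-boxes are exactly: the boxes $(1,i)$ for $i\in\{2,\dots,m\}$; the boxes $(m+1,j)$ for $j\in\{m+2,\dots,n\}$; and one extra box for each $j\in\{m+1,\dots,n-1\}$ with $\{j,j+1\}\notin\mD$. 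These three families are disjoint (the first two occupy disjoint column ranges, the third consists of "gap" boxes of the top row), and they contain respectively $m-1$, $n-m-1$, and $c-2$ boxes: indeed $\{j,j+1\}\notin\mD$ for $j\in\{m+1,\dots,n-1\}$ precisely when $j$ is the right endpoint of one of the arcs $C_2,\dots,C_{c-1}$, of which there are $c-2$ (the two remaining cyclic arc-boundaries, at $\{m,m+1\}$ and at $\{n,1\}$, fall outside this range). Summing, the diagram has $(m-1)+(n-m-1)+(c-2)=n+c-4$ symbols $+$, hence $\dim S_+(\mD^c)=n+c(\mD)-4$, as claimed.

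The only real care needed is this last count, together with the degenerate small cases, e.g. $c=2$ (no interior arc-boundaries, and the formula reads $\dim=n-2$), $m=1$ (the first family is empty), or $n-|T_\mD|=2$ (where $S_+(\mD^c)$ is a point and the formula gives $0$); none of these presents a genuine obstacle. As an independent sanity check one may instead parametrize a representative matrix directly: it is a $2\times n$ rank-$2$ matrix whose columns indexed by $T_\mD$ vanish and whose columns within each arc of size $s_j$ are nonzero multiples of a common direction, giving $\sum_j(s_j+1)=n-|T_\mD|+c(\mD)$ free parameters; the positivity requirements of Definition~\ref{def:positroid} only impose open (strict-inequality) conditions on top of the already-imposed vanishing conditions, so they do not drop the dimension, and quotienting by the $4$-dimensional $GL_2$-scaling again yields $n-|T_\mD|+c(\mD)-4$.
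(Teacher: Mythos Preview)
Your proof is correct and follows essentially the same route as the paper: both reduce to the loopless case via Remark~\ref{rem:zero_columns}, invoke the explicit Le diagram built in the proof of Proposition~\ref{prop:niceness}, and count its $+$ symbols as $(n-2)+(c(\mD)-2)$. Your partition of the $+$'s into three disjoint families and the count of the third family via interior arc-boundaries is exactly the paper's ``subtract the two already-counted boundary pairs $\{m,m+1\}$ and $\{n,1\}$'' step, written out more explicitly; the only addition is your closing parameter-count sanity check, which the paper does not include.
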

\begin{proof}
Consider the case $T_\mD=\emptyset$. Following the notation in the proof of Proposition~\ref{prop:niceness}, we have that the associated Le diagram has the following form:
\medskip
\begin{center}
    \begin{tikzpicture}[scale=0.6]
    \draw (0,0) --(0,2) --(8,2) --(8,1) --(4,1) --(4,0) --cycle;
    \draw (0,1) --(4,1) --(4,2);
    \draw (1,0) --(1,2);
    \draw (3,0) --(3,2);
    \draw (5,1) --(5,2);
    \draw (7,1) --(7,2);
    \draw (0.5,0) node[below]{$n$};
    \draw (3.5,0) node[below]{$k+2$};
    \draw (4,0.5) node[right]{$k+1$};
    \draw (7.5,1) node[below]{$2$};
    \draw (8,1.5) node[right]{$1$};
    \draw (0.5,0.5) node{$+$};
    \draw (3.5,0.5) node{$+$};
    \draw (4.5,1.5) node{$+$};
    \draw (7.5,1.5) node{$+$};
    \draw (2,0.5) node{$\dots$};
    \draw (6,1.5) node{$\dots$};
    \end{tikzpicture}
\end{center}
There are $n-2$ $``+"$ in the diagram. We also add a $``+"$ in the box $(1,j+1)$ for every $j \in \{k+1,\dots,n-1\}$ where $\{j,j+1\}\in \mD^c$. Note that there are as many subsets $\{j,j+1\}$ in $\mD^c$ as the number of connected components of the graph $G_\mD$, which we denote by $c(\mD)$. Moreover among these pairs, there are the subsets $\{1,n\}$ and $\{k,k+1\}$ which correspond to the boxes of Le diagram containing a $``+"$ that have already been counted among the previous $n-2$. Since the dimension of the positroid cell equals to the number of $``+"$s in the Le diagram, the dimension is $n-2+c(\mD)-2=n+c(\mD)-4$.

Now assume that $T\neq \emptyset$. As shown in the proof of Proposition~\ref{prop:niceness}, we can see  $\mD^c$ as a positroid on the ground set $[n]\backslash T$. For such a positroid we will have $T_\mD=\emptyset$, and so its dimension is given by $(n-|T|)+c-4$, where $c$ is the number of connected components of $G_\mD$ on the set $[n]\backslash T$, that is $c(\mD)$.
\end{proof}

\begin{example}\label{ex:dim}
The graphs associated to the maximal positroids in Example~\ref{ex:algorithm} are:
\begin{center}
\tikzset{every picture/.style={line width=0.75pt}} 
\begin{tikzpicture}[x=0.6pt,y=0.6pt,yscale=-1,xscale=1]
\draw  [dashed, thin] (97.88,193.54) -- (74.89,216.12) -- (20.08,192.86) -- (20.36,160.63) -- (43.34,138.04) -- (75.57,138.33) -- (98.16,161.31) -- cycle ;
\draw    (20.08,192.86) -- (75.57,138.33) ;
\draw    (75.57,138.33) -- (20.36,160.63) ;
\draw    (20.36,160.63) -- (20.08,192.86) ;
\draw    (20.08,192.86) -- (97.88,193.54) ;
\draw    (97.88,193.54) -- (98.16,161.31) ;
\draw    (75.57,138.33) -- (98.16,161.31) ;
\draw    (43.34,138.04) -- (75.57,138.33) ;
\draw    (20.36,160.63) -- (43.34,138.04) ;
\draw    (43.34,138.04) -- (97.88,193.54) ;
\draw    (43.34,138.04) -- (98.16,161.31) ;
\draw    (43.34,138.04) -- (20.08,192.86) ;
\draw    (20.08,192.86) -- (98.16,161.31) ;
\draw    (20.36,160.63) -- (97.88,193.54) ;
\draw    (75.57,138.33) -- (97.88,193.54) ;
\draw    (20.36,160.63) -- (98.16,161.31) ;
\draw  [dashed, thin] (216.88,193.54) -- (193.89,216.12) -- (139.08,192.86) -- (139.36,160.63) -- (194.57,138.33) -- (217.16,161.31) -- cycle ;
\draw    (139.36,160.63) -- (139.08,192.86) ;
\draw    (139.36,160.63) -- (194.57,138.33) ;
\draw    (139.08,192.86) -- (194.57,138.33) ;
\draw    (217.16,161.31) -- (216.88,193.54) ;
\draw  [dashed, thin] (337.88,192.54) -- (314.89,215.12) -- (260.08,191.86) -- (260.36,159.63) -- (283.34,137.04) -- (338.16,160.31) -- cycle ;
\draw    (260.36,159.63) -- (260.08,191.86) ;
\draw    (283.34,137.04) -- (337.88,192.54) ;
\draw    (283.34,137.04) -- (338.16,160.31) ;
\draw    (338.16,160.31) -- (337.88,192.54) ;
\draw  [dashed, thin] (434.89,216.12)-- (380.08,192.86) -- (380.36,160.63) -- (403.34,138.04) -- (435.57,138.33) -- cycle ;
\draw    (380.36,160.63) -- (380.08,192.86) ;
\draw    (435.57,138.33) -- (434.89,216.12) ;
\draw    (380.36,160.63) -- (435.57,138.33) ;
\draw    (380.08,192.86) -- (435.57,138.33) ;
\draw    (380.36,160.63) -- (434.89,216.12) ;
\draw    (380.08,192.86) -- (434.89,216.12) ;
\draw  [dashed, thin] (533.89,217.12) -- (479.08,193.86) -- (479.36,161.63) -- (502.34,139.04)-- cycle ;
\draw    (479.36,161.63) -- (479.08,193.86) ;
\draw  [dashed, thin] (610.89,217.12) -- (579.34,139.04) -- (611.57,139.33) -- cycle ;
\draw  [dashed, thin] (703.88,194.54) -- (680.89,217.12) -- (649.34,139.04) -- (681.57,139.33) -- (704.16,162.31) -- cycle ;
\draw    (704.16,162.31) -- (680.89,217.12) ;
\draw    (649.34,139.04) -- (703.88,194.54) ;
\draw    (704.16,162.31) -- (649.34,139.04) ;
\draw    (649.34,139.04) -- (680.89,217.12) ;
\draw    (703.88,194.54) -- (680.89,217.12) ;
\draw    (703.88,194.54) -- (704.16,162.31) ;
\draw  [dashed, thin] (774.88,194.54) -- (751.89,217.12) -- (752.57,139.33) -- (775.16,162.31) -- cycle ;
\draw    (775.16,162.31) -- (774.88,194.54) ;

\draw (23,227.4) node [anchor=north west][inner sep=0.75pt]    {$G_{\mathcal{D} \cup \left( C_{1} \cdot D_{1}^{1}\right)}$};
\draw (100.02,180.82) node [anchor=north west][inner sep=0.75pt]    {$1$};
\draw (99,154.4) node [anchor=north west][inner sep=0.75pt]    {$2$};
\draw (73,119.4) node [anchor=north west][inner sep=0.75pt]    {$3$};
\draw (39,119.02) node [anchor=north west][inner sep=0.75pt]    {$4$};
\draw (7,153.62) node [anchor=north west][inner sep=0.75pt]    {$5$};
\draw (7,179.62) node [anchor=north west][inner sep=0.75pt]    {$6$};
\draw (76.89,219.52) node [anchor=north west][inner sep=0.75pt]    {$8$};
\draw (140,227.4) node [anchor=north west][inner sep=0.75pt]    {$G_{\mathcal{D} +F_{2}^{1}}$};
\draw (219.02,180.82) node [anchor=north west][inner sep=0.75pt]    {$1$};
\draw (219,153.4) node [anchor=north west][inner sep=0.75pt]    {$2$};
\draw (191,120.4) node [anchor=north west][inner sep=0.75pt]    {$3$};
\draw (126,153.62) node [anchor=north west][inner sep=0.75pt]    {$5$};
\draw (126,177.62) node [anchor=north west][inner sep=0.75pt]    {$6$};
\draw (195.89,219.52) node [anchor=north west][inner sep=0.75pt]    {$8$};
\draw (261,226.4) node [anchor=north west][inner sep=0.75pt]    {$G_{\mathcal{D} +F_{1}^{2}}$};
\draw (278,119.02) node [anchor=north west][inner sep=0.75pt]    {$4$};
\draw (247,152.62) node [anchor=north west][inner sep=0.75pt]    {$5$};
\draw (247,177.62) node [anchor=north west][inner sep=0.75pt]    {$6$};
\draw (316.89,218.52) node [anchor=north west][inner sep=0.75pt]    {$8$};
\draw (340.02,179.82) node [anchor=north west][inner sep=0.75pt]    {$1$};
\draw (340,155.4) node [anchor=north west][inner sep=0.75pt]    {$2$};
\draw (382,227.4) node [anchor=north west][inner sep=0.75pt]    {$G_{\mathcal{F} \cup \left(\tilde{C}_{2} \cdot\tilde{D}_{2}^{2}\right)}$};
\draw (431,118.4) node [anchor=north west][inner sep=0.75pt]    {$3$};
\draw (397,119.02) node [anchor=north west][inner sep=0.75pt]    {$4$};
\draw (367,153.62) node [anchor=north west][inner sep=0.75pt]    {$5$};
\draw (367,178.62) node [anchor=north west][inner sep=0.75pt]    {$6$};
\draw (436.89,219.52) node [anchor=north west][inner sep=0.75pt]    {$8$};
\draw (480,228.4) node [anchor=north west][inner sep=0.75pt]    {$G_{\mathcal{F} +\tilde{F}_{1}^{2}}$};
\draw (497,121.02) node [anchor=north west][inner sep=0.75pt]    {$4$};
\draw (466,154.62) node [anchor=north west][inner sep=0.75pt]    {$5$};
\draw (466,179.62) node [anchor=north west][inner sep=0.75pt]    {$6$};
\draw (535.89,220.52) node [anchor=north west][inner sep=0.75pt]    {$8$};
\draw (557,228.4) node [anchor=north west][inner sep=0.75pt]    {$G_{\mathcal{F} +\tilde{F}_{2}^{2}}$};
\draw (608,121.4) node [anchor=north west][inner sep=0.75pt]    {$3$};
\draw (612.89,220.52) node [anchor=north west][inner sep=0.75pt]    {$8$};
\draw (569,122.02) node [anchor=north west][inner sep=0.75pt]    {$4$};
\draw (628,228.4) node [anchor=north west][inner sep=0.75pt]    {$G_{\mathcal{F'} \cup \left(\tilde{C}_{1} \cdot\tilde{D}_{1}^{2}\right)}$};
\draw (677,119.4) node [anchor=north west][inner sep=0.75pt]    {$3$};
\draw (643,120.02) node [anchor=north west][inner sep=0.75pt]    {$4$};
\draw (682.89,220.52) node [anchor=north west][inner sep=0.75pt]    {$8$};
\draw (740,228.4) node [anchor=north west][inner sep=0.75pt]    {$G_{\mathcal{F'} +\tilde{F}_{1}^{2}}$};
\draw (749,121.4) node [anchor=north west][inner sep=0.75pt]    {$3$};
\draw (753.89,220.52) node [anchor=north west][inner sep=0.75pt]    {$8$};
\draw (777.27,181.82) node [anchor=north west][inner sep=0.75pt]    {$1$};
\draw (776.23,154.4) node [anchor=north west][inner sep=0.75pt]    {$2$};

\end{tikzpicture}
\end{center}
By applying Proposition~\ref{cor:dimension}, we can compute the dimensions of these positroids as follows:
\begin{align*}
    & \dim \left( (\mD \cup (C_1 \cdot D_1^1))^c \right) = 8-1+2-4 = 5\\
    & \dim \left( (\mD + F^1_2)^c \right) = \dim \left( (\mD + F^2_1)^c \right) = 8-2+3-4 = 5\\
    & \dim \left( (\mathcal{F} \cup (\tilde{C}_2  \cdot  \tilde{D}_2^2))^c \right) = \dim \left( (\mathcal{F}' \cup (\tilde{C}_1  \cdot  \tilde{D}_1^2))^c \right) = 8-3+2-4 = 3\\
    & \dim \left( (\mathcal{F}+ \tilde{F}^2_1)^c \right) = \dim \left( (\mathcal{F}'+ \tilde{F}^1_2)^c \right) = 8-4+3-4 = 3\\
    & \dim \left( (\mathcal{F}+\tilde{F}^2_2)^c \right) = 8-5+3-4 = 2. 
\end{align*}
Note that the maximal positroids contained in a given set may have different dimensions. \demo
\end{example}

\begin{remark}
It is interesting to notice that the elements of $\MPos(\mD)$
may have different dimensions, as seen in Example~\ref{ex:dim}. If we are interested only in the ones with the maximal dimension, then we can easily compute all the dimensions of the sets in $\Pos(\mD)$ using Proposition~\ref{cor:dimension} and select those with maximal dimensions. This way, we obtain all and only the positroids of maximal dimension contained in $\mD^c$.
\end{remark}

\subsection{Boundaries of positroid cells \label{sec:boundaries}} 
We now apply our results to compute all the codimension one boundaries of a given positroid cell which are themselves are positroids. Similarly, by applying Proposition~\ref{res:1boundaries} multiple times, one can compute all codimension $k$ boundaries. To date, given any of the cryptomorphic representations of positroid cells (e.g.~Grassmann necklaces, Le diagrams, plabic graphs, etc) there is no explicit algorithm to determine all the boundaries of a given codimension for a positroid cell. While one can count the number of ``+'' decorations in a Le diagram to get the dimension cell, one cannot get all the codimension one boundaries of a Le diagram by simply replacing the ``+''s with ``0''s such that the resulting decoration satisfies the Le condition. An example of this is given in \cite[Example 2.17]{agarwala2018study} and mentioned as Example~\ref{ex:dim_count} here. We give a smaller example to illustrate the difficulties and nuances of identifying the boundary cells of a positroid from the various combinatorial representations below.

\begin{example}\label{eg:badLebehavior} 
Consider the matroids with basis sets $\mathcal{B}_1 = \{12,14,24\}$ and $\mathcal{B}_2 = \{12,13,14,23,24\}$. By inclusion of basis sets, we have $\mathcal{B}_1 \subseteq \mathcal{B}_2$. The corresponding Le diagrams are:
\[L(\mathcal{B}_1) = \ytableaushort{+0,+0}\quad \text{ and }\quad L(\mathcal{B}_2) = \ytableaushort{0+,++}\ .\] 
The positroid cell corresponding to $\mathcal{B}_2$ has dimension three, while the positroid cell corresponding to $\mathcal{B}_1$ has dimension two. Inclusion of basis sets implies that the positroid of $\mathcal{B}_1$ is a codimension-one boundary of the positroid defined by $\mathcal{B}_2$. However, there is no way to obtain this boundary by simply taking a subset of ``$+$’’s in $L(\mathcal{B}_2)$ that respects the Le condition.

In this case, one can see the boundary structure by considering subexpression–expression pairs. Recall from Section~\ref{sec:pre} that for $\mathbf{v} = s_2 s_3 s_1 s_2$, a reduced expression in $S_n$, the positive subexpression $\mathbf{u} = \varepsilon \varepsilon \varepsilon s_2$ corresponds to $\mathcal{B}_2$, and $\mathbf{u}' = s_2 \varepsilon s_1 \varepsilon$ corresponds to $\mathcal{B}_1$. Then, since $\mathbf{v}$ is fixed and $\mathbf{u}'$ is a reduced subexpression of $\mathbf{u}$, the condition $\mathbf{u} \leq \mathbf{u}' \leq \mathbf{v}$ holds, and we see that the positroid $(\mathbf{u}', \mathbf{v})$ is a boundary of $(\mathbf{u}, \mathbf{v})$; see e.g.~\cite{marcott2020combinatorics, talaska2013network, knutson2013positroid}. 

However, note that the above approach for identifying boundaries requires a fixed $\mathbf{v}$, i.e., a fixed Le shape. It cannot be applied when the shape changes. While this approach correctly identifies the boundary above, it fails to identify boundaries where the Le shape changes. For instance, it misses that the positroid \[ \ytableaushort{0+,+}  \text{ with basis set } \{12, 13, 34\} \text{ and subexpression–expression pair } (\varepsilon \varepsilon s_2 , s_3 s_1 s_2) \] is a boundary of the positroid \[ \ytableaushort{++,0+}  \text{ with basis set } \{12, 13, 24, 34\} \text{ and subexpression–expression pair } (\varepsilon s_3 \varepsilon \varepsilon , s_2 s_3 s_1 s_2),\] even though this relationship is clear from the basis sets.
\end{example}

\begin{remark}
    While unpublished work by Tower \cite{Towercommunication} provides a method for calculating all such boundaries from Bruhat intervals, there is no direct way to calculate the dimension of a given subexpression–expression pair without passing through the corresponding Le diagram. Moreover, although algorithms exist for passing from a positroid, viewed from a matroidal perspective (i.e., in terms of basis sets), to Le diagrams, and for passing from subexpression–expression pairs to Le diagrams, it remains very difficult to determine the corresponding pairs directly from dependency information in order to calculate these boundaries. Proposition~\ref{res:1boundaries} fills this gap in the literature for rank-$2$ positroids.
\end{remark}

The main idea of the proposition below is to use the dimension formula 
in~Proposition~\ref{cor:dimension}~to determine the types of changes that can be made to a nice set $\mD$ to form a larger nice set $\mD'$ of the correct~dimension.

\begin{proposition}\label{res:1boundaries}
Let $\mD\subset \binom{[n]}{2}$ be such that $\mD^c$ is a positroid with associated graph $G_\mD$. Let $C_1, \dots, C_m$ be the connected components of $G_\mD$ which are cyclically ordered.\footnote{i.e.~$C_{m+1}=C_1$ and for every $i \in [m]$ there is some $j \in [n]$ such that $j \in V(C_i)$ and $j+1 \in V(C_{i+1})$.} Consider the subset $J=\{j \in [n]\backslash T_\mD \mid j \in V(C_i) \text{ with } |V(C_i)|>1 \text{ for some } i \in [m]\}$. Then the set
\[
    \partial\Pos_1(\mD) =  \{ \mD\cup (C_i \cdot C_{i+1}) \mid i \in [m]\} \cup \bigcup_{j \in J} \{\mD+\{j\}\}
\] 
is the set of all positroids $\mathcal{P}$ such that $\mathcal{P}\subset \mD^c$ and $\dim \mathcal{P} = \dim \left( \mD^c\right) -1$.
\end{proposition}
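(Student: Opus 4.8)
The plan is to read off both inclusions from three earlier results: the dimension formula $\dim S_+(\mathcal{D}^c)=n-|T_{\mathcal{D}}|+c(\mathcal{D})-4$ (Proposition~\ref{cor:dimension}), the niceness criterion for being a positroid (Proposition~\ref{prop:niceness}), and the containment criterion for matroids (Lemma~\ref{lemma:inclusion}). Throughout I would use that, as $\mathcal{D}$ is nice, the sets $V(C_1),\dots,V(C_m)$ are pairwise disjoint cyclic intervals of $P_{n,T_{\mathcal{D}}}$ listed in cyclic order (with $m=c(\mathcal{D})\ge 2$, since $\mathcal{D}^c\ne\emptyset$ forces at least two components), so that a union $V(C_i)\cup V(C_j)$ is again a cyclic interval precisely when $C_i$ and $C_j$ are consecutive, and that deleting a vertex from the cycle makes its two former neighbours adjacent.

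\textbf{Every element of $\partial\Pos_1(\mathcal{D})$ has the stated properties.} I would first verify that each listed set is nice, strictly larger than $\mathcal{D}$, and of dimension $\dim(\mathcal{D}^c)-1$. For $\mathcal{D}\cup(C_i\cdot C_{i+1})$ the graph is obtained from $G_{\mathcal{D}}$ by fusing the two consecutive arcs $C_i,C_{i+1}$ into one arc and completing it: niceness is preserved, $T$ is unchanged, the number of components drops by exactly one, so the dimension drops by one, and the containment is strict since $C_i\ne C_{i+1}$. For $\mathcal{D}+\{j\}$ with $j\in J$: moving $j$ into the vanishing set deletes $j$ from the polygon and from its component $C_i$ (which has $|V(C_i)|\ge 2$), leaving $C_i\setminus\{j\}$ a nonempty clique that is still an arc; because deleting $j$ reconnects its polygon-neighbours, every complementary arc stays connected, so $\mathcal{D}+\{j\}$ is nice; here $|T|$ grows by one while $c$ is unchanged, so the dimension again drops by one, and the containment is strict because $j$ lies in a different component from some other vertex.

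\textbf{Every such positroid lies in $\partial\Pos_1(\mathcal{D})$.} Let $\mathcal{D}\subsetneq\mathcal{D}'$ with $(\mathcal{D}')^c$ a positroid of dimension $\dim(\mathcal{D}^c)-1$; then $\mathcal{D}'$ is nice. Put $t=|T_{\mathcal{D}'}\setminus T_{\mathcal{D}}|$ and $m'=c(\mathcal{D}')$; the dimension formula yields $m'=m+t-1$. By Lemma~\ref{lemma:inclusion} each $V(C_i)\setminus T_{\mathcal{D}'}$ is a clique of $G_{\mathcal{D}'}$, and these cliques partition the vertex set of $G_{\mathcal{D}'}$, so every component of $G_{\mathcal{D}'}$ is a union of some of the nonempty ones; hence $m'\le m-r$, where $r$ is the number of $C_i$ wholly absorbed into $T_{\mathcal{D}'}$. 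Comparing the two relations gives $t+r\le 1$, and since $r\ge 1$ would force $t\ge 1$ we conclude $r=0$ and $t\in\{0,1\}$. If $t=0$ then $T_{\mathcal{D}'}=T_{\mathcal{D}}$ and $m'=m-1$, so exactly one pair of arcs $\{C_i,C_j\}$ lies in a common component of $G_{\mathcal{D}'}$; niceness forces $C_i,C_j$ to be consecutive and forces that (merged) component and all others to be cliques, leaving no room for further edges, so $\mathcal{D}'=\mathcal{D}\cup(C_i\cdot C_{i+1})$. If $t=1$ then $T_{\mathcal{D}'}=T_{\mathcal{D}}\cup\{j\}$ with $j$ not a component on its own (as $r=0$), i.e. $j\in J$, and $m'=m$ forces that no edge was added within $[n]\setminus T_{\mathcal{D}'}$ (any such edge would merge two components of $G_{\mathcal{D}}\setminus\{j\}$ and push the count below $m$), so $G_{\mathcal{D}'}=G_{\mathcal{D}}\setminus\{j\}$ and $\mathcal{D}'=\mathcal{D}+\{j\}$. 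In either case $\mathcal{D}'$ belongs to $\partial\Pos_1(\mathcal{D})$.

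\textbf{Main obstacle.} The only genuinely delicate part is the planar/cyclic combinatorics underlying the reverse direction, namely translating ``complete connected components that are cyclic intervals'' into the two facts that fusing two components can be nice only when they are cyclically consecutive, and that the number of components of $G_{\mathcal{D}'}$ is bounded by the number of surviving arcs; once these are in place the dimension count pins down $t\in\{0,1\}$ and the two cases. One must also make sure the degenerate low-dimensional situations (e.g. $m=1$, where $\mathcal{D}^c=\emptyset$) are excluded, as they do not correspond to genuine positroid cells.
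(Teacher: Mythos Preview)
Your proof is correct and follows essentially the same route as the paper's: both use the dimension formula of Proposition~\ref{cor:dimension} to reduce the problem to a two-case analysis governed by how much $|T|$ grows versus how much $c$ drops, and both invoke niceness to force the merged pair of components to be cyclically consecutive. Your write-up is in fact more careful than the paper's on two points: you explicitly verify the forward inclusion (that each listed $\mathcal{D}'$ is nice and of the right dimension), and you introduce the parameter $r$ counting fully absorbed components, which cleanly rules out the possibility that a whole $C_i$ disappears into $T_{\mathcal{D}'}$; the paper's proof handles this only implicitly via the inequality $c(\mathcal{D}')\le c(\mathcal{D})$.
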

\begin{proof}
By Proposition~\ref{cor:dimension}, the dimension of the positroid cell corresponding to $\mD$ is $n-|T_\mD|+c(\mD)-4$. 
Assume that $\mD'\subset \binom{[n]}{2}$ is such that $(\mD')^c$ is a positroid contained in $\mD^c$ and $\dim ((\mD')^c) = \dim(\mD^c)-1$. 
We know that $\mD \subset \mD'$ and that $\mD'$ is nice. Hence $\mD'$  satisfies $n-|T_{\mD'}|-4+c(\mD') = \dim (\mD^c)-1.$ In other words, there is some 
integer $b < c(\mD)$ such that  $c(\mD') = c(\mD) -b$ and $|T_{\mD'}| = |T_{\mD}| - b+ 1$. Note that $c(\mD') \leq c(\mD)$, since all the connected components are complete, and so we can not separate one connected component by adding a vertex to the vanishing set $T_{\mD'}$. Hence $b\geq 0$. Moreover, $T_\mD \subseteq T_{\mD'}$ implies that $|T_{\mD}| \leq |T_{\mD'}| = |T_{\mD}| - b+ 1$, and so $b$ can be either $0$ or $1$.

For $b=1$, the number of connected components decreases by $1$. 
The only possibilities to obtain a nice set is to merge two consecutive connected components, $C_i$ and $C_{i+1}$ for some $i$. On the other hand, for $b=0$, we are adding 
an element to the vanishing set $T_\mD$ without changing the number of connected components of $G_\mD$, so the only possibilities are the vertices lying in components with at least $2$ vertices, that is $\{i \in [n]\backslash T_\mD \mid \text{ there exists } j \in [n]\backslash T_\mD \text{ such that } \{i,j\} \in \mD\}$. These are exactly the graphs corresponding to the codimension one positroids contained in the boundary of $\mD^c$. 
\end{proof}

\subsection{Intersections of positroid cells \label{sec:intersections}}
We now show that the maximal positroids in the intersection, or the common boundary of two or more positroid cells are exactly the maximal matroids contained in the intersection. That is, applying Proposition~\ref{prop:matT} to the union of the dependent sets produces all the maximal positroids. 

\begin{proposition} \label{prop:intersection}
Let $\mD_1, \dots, \mD_k \subset \binom{[n]}{2}$ be the dependent sets of $k$ positroids. The set of the maximal positroids lying in the common boundary of the positroid cells associated to $(\mD_1)^c, \dots, (\mD_k)^c$ is equal to:
\[ \MPos(\mD_1\cup\dots\cup\mD_k) = \Mat(\mD_1\cup \dots\cup\mD_k) = \left\{ \left( \overline{(\mD_1\cup \dots \cup \mD_k) +T} \right)^c \mid T \in \mathbb{T}_{\mD_1\cup\dots\cup\mD_k} \right\}  \]
\end{proposition}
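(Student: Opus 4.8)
The plan is to prove the chain of equalities from right to left, reducing everything to Proposition~\ref{prop:matT}. The rightmost equality $\Mat(\mD_1\cup\dots\cup\mD_k) = \{(\overline{(\mD_1\cup\dots\cup\mD_k)+T})^c \mid T\in\mathbb{T}_{\mD_1\cup\dots\cup\mD_k}\}$ is simply Proposition~\ref{prop:matT} applied to the set $\mD := \mD_1\cup\dots\cup\mD_k$, so nothing new is required there. The real content is the left equality $\MPos(\mD_1\cup\dots\cup\mD_k) = \Mat(\mD_1\cup\dots\cup\mD_k)$, together with the identification of this set with the maximal positroids in the common boundary. The key observation to set up first is that the closure of the positroid cell $S_+(\mD_i^c)$ consists exactly of those positroid cells $S_+(\mF^c)$ with $\mF^c$ a positroid and $\mD_i\subseteq\mF$ (equivalently $\mF^c\subseteq\mD_i^c$ as matroids on $[n]$); this is the standard fact that closure in the positroid stratification corresponds to the matroid order, which underlies the CW structure cited in the introduction. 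Hence a positroid $\mathcal{P}=\mF^c$ lies in the common boundary $\bigcap_i \overline{S_+(\mD_i^c)}$ if and only if $\mD_i\subseteq\mF$ for all $i$, i.e.\ $\mD_1\cup\dots\cup\mD_k\subseteq\mF$, i.e.\ $\mF^c\subseteq(\mD_1\cup\dots\cup\mD_k)^c$. So the maximal positroids in the common boundary are precisely the maximal positroids contained in $(\mD_1\cup\dots\cup\mD_k)^c$, which by Definition~\ref{def:max} is exactly $\MPos(\mD_1\cup\dots\cup\mD_k)$.

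It remains to show $\MPos(\mD) = \Mat(\mD)$ for $\mD = \mD_1\cup\dots\cup\mD_k$, and this is where the hypothesis that each $\mD_i$ is itself the dependent set of a positroid is used. The inclusion $\MPos(\mD)\subseteq$ (positroids contained in some maximal matroid of $\mD^c$) is automatic since every positroid is a matroid; so for this direction it suffices to show that every maximal matroid contained in $\mD^c$ is already a positroid. This is the crux. I would argue as follows: let $\mF^c\in\Mat(\mD)$, so by Proposition~\ref{prop:matT} we have $\mF = \overline{\mD+T}$ for some $T\in\mathbb{T}_\mD$. I want to show $\mF$ is nice, i.e.\ satisfies conditions (1) and (2) of Definition~\ref{def:nice}. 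Condition (1) holds by construction (we took completions of connected components). For condition (2), suppose toward a contradiction that some connected component $C$ of $G_\mF$ disconnects $P_{n,T_\mF}$. Since $\mD = \bigcup \mD_i$ with each $\mD_i^c$ a positroid (hence each $\mD_i$ nice), one shows that $C$ must ``straddle'' the components of at least two of the $\mD_i$'s in a crossing fashion — but then one of those $\mD_i$ would already fail to be nice, or, more precisely, one exhibits a strictly larger matroid $\mF'$ with $\mD\subseteq\mF'\subsetneq\mF$ (obtained by merging the crossing components or by enlarging $T_\mF$), contradicting maximality of $\mF^c$ in $\Mat(\mD)$. The point is that a \emph{maximal} matroid contained in $\mD^c$ cannot have a disconnecting component, because the operations of Remark~\ref{rmk:twooperations} (merging crossing components, or moving a vertex into the vanishing set) always produce a matroid that still contains $\mD$ and is strictly larger, so the maximal matroids are forced to already be nice. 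Conversely, $\Mat(\mD)\subseteq\MPos(\mD)$: any positroid contained in $\mD^c$ is a matroid contained in $\mD^c$, hence contained in some element of $\Mat(\mD)$; since we just showed every element of $\Mat(\mD)$ is a positroid, the maximal positroids in $\mD^c$ are exactly the elements of $\Mat(\mD)$.

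I expect the main obstacle to be the careful verification of condition (2) of niceness for a maximal matroid in $\mD^c$, i.e.\ showing that maximality genuinely forces the absence of a disconnecting component. The delicate part is handling the interaction with the vanishing set $T_\mF$: a component could disconnect $P_{n,T_\mF}$ in a way that is ``repaired'' by a different choice of $T$, so one must argue via Proposition~\ref{prop:matT}'s parametrization by $\mathbb{T}_\mD$ that the particular $T$ attached to a given maximal matroid is the ``right'' one, and that no disconnecting configuration survives. A clean way to package this is: apply Algorithm~\ref{alg:posT} (via Proposition~\ref{prop:posT}) to each $\overline{\mD+T}$ with $T\in\mathbb{T}_\mD$; if $\overline{\mD+T}$ were not nice, the algorithm would produce a strictly larger matroid still containing $\mD$, contradicting that $(\overline{\mD+T})^c\in\Mat(\mD)$ is maximal — so every element of $\Mat(\mD)$ is nice, hence a positroid, and the two sets coincide. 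This reduces the whole proposition to bookkeeping with results already established.
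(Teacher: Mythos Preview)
Your setup is correct: the rightmost equality is Proposition~\ref{prop:matT}, the identification of the common boundary with $\MPos(\mD)$ is the right bookkeeping, and the crux is indeed showing that every $\mF=\overline{\mD+T}$ with $T\in\mathbb{T}_\mD$ is nice. But your proposed argument for this crux is backwards, and this is a genuine gap.

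The operations of Remark~\ref{rmk:twooperations} and Algorithm~\ref{alg:posT} enlarge the \emph{dependent set}, hence produce \emph{smaller} matroids $(\mF')^c\subsetneq\mF^c$. So if $\mF$ were not nice, running the algorithm would yield positroids strictly contained in $\mF^c$, which in no way contradicts the maximality of $\mF^c$ among matroids inside $\mD^c$. Indeed $\Mat(\mD)\neq\MPos(\mD)$ in general: in Example~\ref{ex:crossing} the set $\mD^c$ is itself the unique maximal matroid in $\mD^c$, yet $\mD^c$ is not a positroid. Maximality alone cannot force niceness; what forces it here is precisely the hypothesis that each $\mD_i$ is nice, and this must be used directly rather than through Algorithm~\ref{alg:posT}.

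The paper's argument does not invoke maximality of $\mF^c$ at all. It takes a hypothetical disconnecting component $C$ of $G_\mF$, writes $V(C)$ as a disjoint union of cyclic intervals $[a_1,b_1],\dots,[a_m,b_m]$ in $P_{n,T_\mF}$ with genuine gap vertices $f_e\notin V(C)$ between consecutive intervals, and observes that connectivity of $C$ in $G_{\mD+T}$ (whose edges all come from $\mD$) forces some edge $\{c,d\}\in\mD$ to jump between two of these intervals. That edge lies in some $\mD_\ell$; since $\mD_\ell$ is nice, the component of $G_{\mD_\ell}$ containing $\{c,d\}$ is a full cyclic interval in $[n]\setminus T_{\mD_\ell}$, so one of the arcs $[c,d]$ or $[d,c]$, restricted to $[n]\setminus T_\mF$, is entirely contained in $V(C)$. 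But the gap vertex $f_i$ lies in that arc, lies in $[n]\setminus T_\mF$, and is not in $V(C)$ --- a contradiction. Your initial instinct (``one of those $\mD_i$ would already fail to be nice'') was much closer to this than the maximality reformulation you ultimately settled on; the fix is to develop that instinct into the cyclic-interval argument, not to appeal to Algorithm~\ref{alg:posT}.
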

\begin{proof}
Consider the set $\mD = \mD_1\cup \dots \cup \mD_k$. Let $T \in \mathbb{T}_{\mD}$ and $\mathcal{F} = \overline{\mD + T}$. We want to prove that $\mF$ is a positroid. Since by construction of $\mathcal{F}$ all the connected components of $G_\mF$ are complete, we only need to prove that for every connected component $C$ of $G_\mF$, $P_{n,T_\mF}\backslash C$ is connected. Suppose by contradiction this is not true and let $C$ be a connected component of $G_\mF$ such that $P_{n,T_\mF}\backslash C$ is disconnected. Let $[a_1,b_1], \dots, [a_m,b_m]$ be the maximal cyclic intervals in $C|_{P_{n,T_\mF}}$. Note that since $C$ is a complete component, for every $e \in [m]$ there exists $f_e \in [n]\backslash T_\mF$ such that $b_e <  f_e < a_{e+1}$ (with respect to cyclic order). Since $C$ is a connected component, there must exist $c \in [a_i,b_i]$ and $d\in [a_j,b_j]$ for some $i<j$, $i,j \in [m]$, such that $\{c,d\} \in \mD = \mD_1\cup \dots \cup \mD_k$. In particular, $\{c,d\} \in \mD_\ell$ for some $\ell \in [k]$. Since $(\mD_\ell)^c$ is a positroid, if $C'$ is the connected component of $G_{\mD_\ell}$ containing $\{c,d\}$, then one of the cyclic interval of $[n]\backslash T_{\mD_\ell}$, $[c,d]$ or $[d,c]$ is contained in $V(C')$. Assume that $[c,d]\subset V(C')$. Then, since $C'$ is complete, $[c,d]|_{[n]\backslash T_\mF} \subset \bigcup_{i\leq e \leq j}[a_e,b_e]$. This is not possible since $f_i \in [c,d]|_{[n]\backslash T_\mF} \backslash \bigcup_{i\leq e \leq j}[a_e,b_e]$. In the same way, if $[d,c]\subset V(C')$ we would get a contradiction.
It follows that $\mF = \overline{\mD+T}$ is the dependent set of a positroid and hence, applying Algorithm~\ref{alg:posT} to $\mF$ we would get $\Pos(\mF) = \{\mF^c\}$. In particular we have that 
\begin{equation*}
\MPos(\mD) = \bigcup_{T \in \mathbb{T}_\mD} \Pos\left(\overline{\mD+T}\right) = \bigcup_{T \in \mathbb{T}_\mD} \left\{\left(\overline{\mD+T}\right)^c\right\} = \Mat(\mD). \tag*{\qedhere}\end{equation*}
\end{proof}

We now consider Example~4.4 from \cite{agarwala2018study}, where the authors presented a pair of positroids whose common boundary contains a codimension-one positroid that cannot be obtained from their corresponding Le diagrams. In the example below, we show that the new graphical characterization of positroids simplifies the visualization of such cells.

\begin{example}\label{ex:dim_count}
Consider the subsets $\mD_1=\{\{3,4\}, \{5,6\}\}$ and $\mD_2=\{\{2,3\}, \{5,6\}\}$. By Proposition~\ref{prop:niceness}, we see that $(\mD_1)^c$ and $(\mD_2)^c$ are both positroids. The graph associated to these positroids are:
\begin{center}
\tikzset{every picture/.style={line width=0.75pt}}
\begin{tikzpicture}[x=0.4pt,y=0.4pt,yscale=-1,xscale=1]

\draw  [dashed, thin] (189,186.5) -- (157.25,241.49) -- (93.75,241.49) -- (62,186.5) -- (93.75,131.51) -- (157.25,131.51) -- cycle ;
\draw[thick]    (62,186.5) -- (93.75,131.51) ;
\draw[thick]    (93.75,241.49) -- (157.25,241.49) ;
\draw  [dashed, thin] (376,186.5) -- (344.25,241.49) -- (280.75,241.49) -- (249,186.5) -- (280.75,131.51) -- (344.25,131.51) -- cycle ;
\draw[thick]    (344.25,241.49) -- (280.75,241.49) ;
\draw[thick]    (280.75,131.51) -- (344.25,131.51) ;

\draw (192,173.4) node [anchor=north west][inner sep=0.75pt]    {$1$};
\draw (380,173.4) node [anchor=north west][inner sep=0.75pt]    {$1$};
\draw (151,105) node [anchor=north west][inner sep=0.75pt]    {$2$};
\draw (337,105) node [anchor=north west][inner sep=0.75pt]    {$2$};
\draw (86,105) node [anchor=north west][inner sep=0.75pt]    {$3$};
\draw (276,105) node [anchor=north west][inner sep=0.75pt]    {$3$};
\draw (42,173.4) node [anchor=north west][inner sep=0.75pt]    {$4$};
\draw (230,173.4) node [anchor=north west][inner sep=0.75pt]    {$4$};
\draw (86,244.4) node [anchor=north west][inner sep=0.75pt]    {$5$};
\draw (276,244.4) node [anchor=north west][inner sep=0.75pt]    {$5$};
\draw (151,244.4) node [anchor=north west][inner sep=0.75pt]    {$6$};
\draw (337,244.4) node [anchor=north west][inner sep=0.75pt]    {$6$};
\end{tikzpicture}
\end{center}
and we have $\dim(\mD_1^c) = \dim(\mD_2^c)= 6+4-4=6$.
In order to compute the positroids contained in their common boundaries, we use Corollary~\ref{cor:allpos} to determine all the maximal positroids whose dependent set contains $\mD=\mD_1\cup\mD_2$. We have $\mathbb{T}_\mD = \{\emptyset, \{3\}\}$ and 
\[\Mat(\mD)=\left\{\left(\overline{\mD_1\cup\mD_2}\right)^c, \left(\overline{(\mD_1\cup\mD_2)+\{3\}}\right)^c\right\}.\] 
The dimension of these positroid cells are $6+3-4=5$ and $6-1+4-4=5$. The associated graphs are:
\begin{center}
\tikzset{every picture/.style={line width=0.75pt}} 
\begin{tikzpicture}[x=0.4pt,y=0.4pt,yscale=-1,xscale=1]
\draw  [dashed,thin] (189,186.5) -- (157.25,241.49) -- (93.75,241.49) -- (62,186.5) -- (93.75,131.51) -- (157.25,131.51) -- cycle ;
\draw[thick]    (62,186.5) -- (93.75,131.51) ;
\draw[thick]    (93.75,241.49) -- (157.25,241.49) ;
\draw  [dashed,thin] (376,186.5) -- (344.25,241.49) -- (280.75,241.49) -- (249,186.5)-- (344.25,131.51) -- cycle ;
\draw[thick]    (344.25,241.49) -- (280.75,241.49) ;
\draw[thick]    (93.75,131.51) -- (157.25,131.51) ;
\draw[thick]    (62,186.5) -- (157.25,131.51) ;

\draw (192,173.4) node [anchor=north west][inner sep=0.75pt]    {$1$};
\draw (380,173.4) node [anchor=north west][inner sep=0.75pt]    {$1$};
\draw (151,105) node [anchor=north west][inner sep=0.75pt]    {$2$};
\draw (337,105) node [anchor=north west][inner sep=0.75pt]    {$2$};
\draw (86,105) node [anchor=north west][inner sep=0.75pt]    {$3$};
\draw (42,173.4) node [anchor=north west][inner sep=0.75pt]    {$4$};
\draw (230,173.4) node [anchor=north west][inner sep=0.75pt]    {$4$};
\draw (86,244.4) node [anchor=north west][inner sep=0.75pt]    {$5$};
\draw (276,244.4) node [anchor=north west][inner sep=0.75pt]    {$5$};
\draw (151,244.4) node [anchor=north west][inner sep=0.75pt]    {$6$};
\draw (337,244.4) node [anchor=north west][inner sep=0.75pt]    {$6$};
\end{tikzpicture}
\end{center}
We can see that these are nice sets, hence they are the maximal positroids contained in $(\mD_1)^c \cap (\mD_2)^c$. Moreover, they are both positroids of codimension $1$ lying in the boundary of both $\mD_1^c$ and $\mD_2^c$. \demo
\end{example}

\bibliographystyle{abbrv}
\bibliography{JKresidue}

\end{document}